\newlength{\commentWidth}
\newcommand{\atcp}[1]{\tcp*[r]{\makebox[\commentWidth]{\textit{#1}\hfill}}}
\let\oldnl\nl
\newcommand{\nonl}{\renewcommand{\nl}{\let\nl\oldnl}}
\colorlet{DarkRed}{red!50!black}
\colorlet{DarkGreen}{green!50!black}
\colorlet{DarkBlue}{blue!50!black}
\definecolor{orange}{RGB}{235,90,0}
\definecolor{darkorange}{RGB}{175,30,0}
\definecolor{turkis}{RGB}{131,182,182}
\definecolor{darkturkis}{RGB}{31,82,82}
\definecolor{green}{RGB}{102,180,0}
\definecolor{darkgreen}{RGB}{51,90,0}
\definecolor{myblue}{RGB}{0,0,213}
\definecolor{mydarkblue}{RGB}{0,0,100}
\definecolor{mybrightblue}{HTML}{74B0E4}
\definecolor{mybrighterblue}{HTML}{B3EAFA}
\definecolor{lila}{RGB}{102,0,102}
\definecolor{darkred}{RGB}{139,0,0}
\definecolor{darkyellow}{RGB}{188,135,2}
\definecolor{brightgray}{RGB}{200,200,200}
\definecolor{darkgray}{RGB}{50,50,50}
\definecolor{amaranth}{rgb}{0.9, 0.17, 0.31}
\definecolor{alizarin}{rgb}{0.82, 0.1, 0.26}
\definecolor{amber}{rgb}{1.0, 0.75, 0.0}
\definecolor{green(ryb)}{rgb}{0.4, 0.69, 0.2}
\definecolor{hanblue}{rgb}{0.27, 0.42, 0.81}
\definecolor{grannysmithapple}{rgb}{0.66, 0.89, 0.63}
\newtheorem*{theorem*}{Theorem}
\newtheorem*{lemma*}{Lemma}
\newtheorem*{corollary*}{Corollary}
\newtheorem{theorem}{Theorem}
\newtheorem{definition}{Definition}
\newtheorem{lemma}{Lemma}
\newtheorem{lemma+definition}{Lemma and Definition}
\newtheorem{corollary}{Corollary}
\newcommand{\set}[1]{\{#1\}}
\DeclareMathOperator{\LOG}{\ensuremath{\operatorname{log\mathllap{\raisebox{1.35ex}{\rule{0.95em}{0.08ex}}\hspace{0.025em}}}}}
\newcommand{\calC}{\mathcal{C}}
\newcommand{\wt}[1]{\widetilde{#1}}
\newcommand{\as}{{\mathrel{\,:=\,}}} 
\newcommand{\T}{\mathbf{T_*}}
\DeclareMathOperator{\mm}{max}
\newcommand{\MAX}{\ensuremath{\mm_1}}
\renewcommand{\P}{\ensuremath{\mathcal{P}}}
\DeclareMathOperator{\argmin}{argmin}
\DeclareMathOperator{\argmax}{argmax}
\DeclareMathOperator{\Mea}{Mea}
\DeclareMathOperator{\Disc}{Disc}
\DeclareMathOperator{\anc}{anc}
\newcommand{\ancstar}{\anc^*}
\let\eps\varepsilon
\newcommand{\BBB}{\mathcal{B}}
\newcommand{\MMM}{\mathcal{Z}}
\newcommand{\TTT}{\mathcal{T}}
\newcommand{\ZZ}{\ensuremath{\mathbb{Z}}}
\newcommand{\RR}{\ensuremath{\mathbb{R}}}
\newcommand{\CC}{\ensuremath{\mathbb{C}}}
\newcommand{\cisolate}{\ensuremath{\mathbb{C}\textsc{Isolate}}}
	\newcommand{\beql}[1]{\begin{equation}\label{eq:#1}}
	\newcommand{\eeql}{\end{equation}}
	\newcommand{\bitem}{\begin{itemize}}
	\newcommand{\eitem}{\end{itemize}}
	\newcommand{\benum}{\begin{enumerate}}
	\newcommand{\eenum}{\end{enumerate}}
\newcommand*\samethanks[1][\value{footnote}]{\footnotemark[#1]}
\begin{document}

\title{
	A Near-Optimal Subdivision Algorithm for Complex Root Isolation based on
	the Pellet Test and Newton Iteration
}

\author{
	Ruben Becker 
	\thanks{
		MPI for Informatics, Saarland Informatics Campus, Saarbr\"{u}cken, Germany.
	} 
	\thanks{
		MPC-VCC and Saarbr\"ucken Graduate School of Computer Science.
	} 
	\\ \small{\texttt{ruben@mpi-inf.mpg.de}}
	\and Michael Sagraloff 
	\samethanks[1]
	\\ \small{\texttt{msagralo@mpi-inf.mpg.de}}
	\and Vikram Sharma 
	\thanks{
		Institute of Mathematical Sciences Chennai, India.
	} 
	\\ \small{\texttt{vikram@imsc.res.in}}
	\and Chee Yap 
	\thanks{
		Courant Institute of Mathematical Sciences, New York University, New York, USA.
	}
	\\ \small{\texttt{yap@cs.nyu.edu}}
}
\date{}

\maketitle

\begin{abstract}
	We describe a subdivision algorithm for isolating the
	complex roots of a polynomial $F\in\mathbb{C}[x]$.
	Given an oracle that provides
	approximations of each of the coefficients of $F$ to any absolute error bound and given an arbitrary square $\mathcal{B}$ in the complex plane
	containing only simple roots of $F$, our algorithm
	returns disjoint isolating disks for the roots of $F$ in $\mathcal{B}$.

	Our complexity analysis bounds the absolute error to which the coefficients of $F$ have to be provided,
	the total number of iterations, and the overall bit complexity.
	It further shows that the complexity of our algorithm
	is controlled by the geometry of the roots in
	a near neighborhood of the input square $\mathcal{B}$, namely, the number of
	roots, their absolute values and pairwise distances.
	The number of subdivision steps is near-optimal.
	For the \emph{benchmark problem}, namely, to isolate all the roots of
	a polynomial of degree $n$ with integer
	coefficients of bit size less than $\tau$,
	our algorithm needs $\tilde{O}(n^3+n^2\tau)$ bit operations, which
	is comparable to the record bound of Pan (2002).
	It is the first time that such a bound has been achieved using subdivision
	methods, and independent of divide-and-conquer techniques
	such as Sch\"onhage's splitting circle technique.

	Our algorithm uses the
	quadtree construction of Weyl (1924) with two key ingredients:
	using Pellet's Theorem (1881) combined with Graeffe iteration,
	we derive a "soft-test" to count the number of roots in a disk.
	Using Schr\"oder's modified Newton operator combined with bisection,
	in a form inspired by the quadratic interval method from Abbot (2006),
	we achieve quadratic convergence towards root clusters.
	Relative to the divide-conquer algorithms,
	our algorithm is quite simple with the potential of being practical.
	This paper is self-contained: we provide pseudo-code
	for all subroutines used by our algorithm.
\end{abstract}

\section{Introduction}

The computation of the roots of a univariate polynomial is one of the best studied problems in the areas of
computer algebra and numerical analysis, nevertheless there are still a number of novel algorithms presented each year; see~\cite{McNamee2012239,McNamee:2002,McNamee2007,McNamee-Pan2013,Pan:history} for an extensive overview. One reason for this development is undoubtedly the great importance of the problem, which results from the fact that solutions for many problems from mathematics,
engineering, computer science, or the natural sciences make critical use of univariate root
solving.
Another reason for the steady research is that, despite the huge existing literature, there is still a large discrepancy between methods
that are considered to be efficient in practice and those that achieve good theoretical bounds. For instance,
for computing all complex roots of a polynomial, practitioners typically use Aberth's, Weierstrass-Durand-Kerner's and QR
algorithms. These iterative methods are relatively simple as, in each step, we only need to evaluate the given polynomial (and its derivative) at certain points.
They have been integrated in popular packages such as \textsc{MPSolve}~\cite{Bini-Fiorentino,DBLP:journals/jcam/BiniR14}
or \texttt{eigensolve}~\cite{DBLP:journals/jsc/Fortune02},
regardless
of the fact that their excellent empirical behavior has
not been entirely verified in theory. In contrast, there exist algorithms~\cite{Pan:survey,MSW-rootfinding2013,Pan:alg} that achieve near-optimal bounds
with respect to asymptotic complexity; however, implementations of these methods do not exist. The main reason
for this situation
is that these algorithms are quite involved and that they use a
series of asymptotically fast
subroutines (see \cite[p.~702]{Pan:alg}).
In most cases, this rules out a self-contained presentation, which makes it difficult to access such methods, not only for practitioners but also for researchers working
in the same area. In addition, for an efficient implementation, it would be necessary to incorporate a
sophisticated precision management and many implementation tricks. Even then, there might still be a considerable overhead due to the extensive
use of asymptotically fast subroutines, which does not show up in the asymptotic complexity bounds but is
critical for input sizes that can be handled on modern computers.

In this paper, we aim to resolve the above described discrepancy by presenting a subdivision algorithm for complex root isolation, which we denote by {\cisolate}. For our method, we mainly combine simple and well-known techniques such as the classical quad-tree construction by Weyl~\cite{Weyl}, Pellet's Theorem~\cite{rahman2002analytic}, Graeffe iteration~\cite{Graeffe49,householder-graeffe:59}, and Schr\"oder's modified Newton operator~\cite{Schroder1870}. In addition, we derive bounds on its theoretical worst-case complexity matching the best bounds currently known for this problem; see Section~\ref{subsec:main results} for more details. Hence, we hope that our contribution will finally bring together theory and practice in the area of complex root finding.
In this context, it is
remarkable that, for the complexity results, we do not require any asymptotically fast subroutines except the classical fast algorithms for polynomial multiplication and Taylor shift computation.
Our presentation is self contained and we provide pseudo-code for all subroutines. Compared to existing asymptotically fast algorithms, our method is relatively simple and has the potential of being practical.

In theory, the currently
best algorithm for complex root finding goes back to Sch\"onhage's splitting circle method~\cite{schonhage:fundamental}, which
has been considerably refined by Pan~\cite{Pan:alg} and others~\cite{Kirrinnis1998378,Neff199681}. In~\cite{Pan:alg}, Pan gives an algorithm for approximate polynomial
factorization with near-optimal arithmetic and bit complexity.\footnote{Pan considers a similar model of computation, where it is assumed that the coefficients of the input polynomial are
	complex numbers that can be accessed to an arbitrary precision. Then, for
	a polynomial $F$ with roots $z_1,\ldots,z_n$ contained in the unit disk and an integer $L\ge n\log n$, Pan's
	algorithm computes approximations $\tilde{z}_i$ of $z_i$ with
	$\|F-\operatorname{lcf}(F)\cdot \prod_{i=1}^n (x-\tilde{z}_i)\|_1<2^{-L}\cdot\|F\|_1$ using only $\tilde{O}(n\log L)$
	arithmetic operations with a precision of $O(L)$. For a lower bound on the bit
	complexity of the approximate polynomial factorization, Pan considers a polynomial whose coefficients must be approximated with a precision of $\Omega(L)$
	as, otherwise, the above inequality is not fulfilled. This shows that already the cost for reading sufficiently good
	approximations of the input polynomial is comparable to the cost for running the entire algorithm. Hence, near-optimality of his algorithm follows. In the considered computational model, Pan's algorithm
	also performs near-optimal with respect to the Boolean complexity of the problem of approximating all roots.
	However, we remark that this \emph{does not} imply near-optimality of his method for the benchmark problem of isolating the complex roots of an integer polynomial. Namely, Pan's argument for the lower bound is based on a lower bound on the precision to which the coefficients have to be approximated. In the case of integer polynomials, the coefficients are given exactly, hence the cost for reading an arbitrary good approximation of the polynomial never exceeds the cost for reading the integer coefficients.}
From an approximate factorization, one can derive isolating disks for all complex roots. A
corresponding algorithm for complex root isolation, which uses Pan's method as a subroutine, has been presented
and analyzed in~\cite{MSW-rootfinding2013}. Its cost can be expressed in terms of (accessible) parameters that directly depend on the input such as the degree of $F$ and the size of its coefficients, but
also in terms of (hidden) geometric parameters such as the pairwise distances between the roots.
A special case, namely the so-called \emph{(complex) benchmark problem} of isolating all complex roots of a polynomial
$F$ with integer coefficients of bit size at most $\tau$, has attracted a lot of interest in the literature.
Using Pan's method~\cite{Pan:survey,MSW-rootfinding2013}, the latter problem can be solved with $\tilde{O}(n^2\tau)$ operations\footnote{With $\tilde O(\cdot)$, we indicate that poly-logarithmic factors are omitted, i.e., for a function $p$, we denote with $\tilde O(p)$ the set of functions in $O(p \log^c p)$, where $c$ is a constant.}, which constitutes
the current record bound for this problem.\footnote{So far, the bound $\tilde{O}(n^2\tau)$ can only be achieved by running Pan's factorization algorithm with an $L$
	of size $\Omega(n(\tau+\log n))$, which means that $\tilde{\Theta}(n^2\tau)$ bit operations are needed for any input polynomial; see~\cite[Theorem~3.1]{Pan:survey} for details. The adaptive algorithm from~\cite{MSW-rootfinding2013} needs $\tilde{O}(n^3+n^2\tau)$ bit operations, however its cost crucially depends on the hardness of the input polynomial (e.g.,~the separations of its roots), hence the actual cost is typically much lower.} So far, there exists no other method for complex root isolation that achieves a comparable bound.
For the \emph{real benchmark problem}, that is the isolation of the \emph{real} roots of a polynomial of degree $n$ with integer coefficients of bit size at most $\tau$, recent work~\cite{Sagraloff2015} describes a practical subdivision algorithm based on the Descartes method and Newton Iteration with bit complexity $\tilde{O}(n^3+n^2\tau)$.
An implementation of this method~\cite{DBLP:conf/issac/KobelRS16} is competitive with the fastest existing implementations~\cite{rouillier-zimmermann:roots:04} for real root isolation, and it shows superior performance for hard instances, where roots appear in clusters.
Our contribution is in the same line with~\cite{Sagraloff2015}, that is, both methods combine a subdivision approach, a simple predicate to test for roots, and Newton iteration to speed up convergence. The main difference is that we treat the more general problem of isolating all complex roots, whereas the algorithm from~\cite{Sagraloff2015} can only be used to compute the real roots, due to the use of Descartes' Rule of Signs to test for roots.

We further remark that, in comparison to global approaches such as \textsc{MPSolve}~\cite{Bini-Fiorentino,DBLP:journals/jcam/BiniR14},  which compute all complex roots in parallel, our algorithm can also be used for a local search for only the roots contained in some given square. In this case, the number of iterations as well as the cost of the algorithm adapt to geometric parameters that only depend on the roots located in some neighborhood of the given square.

\subsection{Overview of the Algorithm and Main Results}\label{subsec:main results}

We consider a polynomial
\begin{align}\label{def:polyF}
	F(x)=\sum_{i=0}^n a_i x^i\in\mathbb{C}[x],\quad\text{with }n\ge 2\text{ and }\frac{1}{4}<|a_n|\le 1.
\end{align}
Notice that, after multiplication with a suitable power of two, we can always ensure that the above requirement on the leading coefficient is fulfilled, without changing the roots of the given polynomial. It is assumed that the coefficients of $F$ are given by means of a coefficient oracle. That is, for an arbitrary $L$, the oracle provides a dyadic approximation $\tilde{a}_i$ of each coefficient $a_i$ that coincides with $a_i$ to $L$ bits after the binary point. We call an approximation $\tilde{F}$ obtained in this way an \emph{(absolute) $L$-bit approximation of $F$} and assume that the cost for asking the oracle
for an $L$-bit approximation of $F$ is the cost of reading such an approximation;\footnote{Notice that we only require approximations of the coefficients, hence our method also applies to polynomials with algebraic, or even transcendental coefficients. In any case, the given bounds for the cost of isolating the roots of such a polynomial do not encounter the cost for computing sufficiently good $L$-bit approximations of the coefficients. Depending on the type of the coefficients, this cost might be considerably larger than the cost for just reading such approximations.} see Section~\ref{sec:definitions} for more details.
Let us denote by $z_1$ to $z_n$ the roots of $F$, where each root occurs as often as determined by its multiplicity.
Now, given a closed, axis-aligned square $\mathcal{B}$ in the complex plane, our goal is to compute isolating disks for all roots of $F$ contained in $\mathcal{B}$. Since we can only ask for approximations of the coefficients, we need to further require that $\mathcal{B}$ contains only simple roots of $F$ as, otherwise, a multiple root of multiplicity $k$ cannot be distinguished from a cluster of $k$ nearby roots, and thus the problem becomes ill-posed. If the latter requirement is fulfilled, then our algorithm {$\cisolate$}
computes isolating disks for all roots contained in $\mathcal{B}$.\footnote{If the requirement is not fulfilled, our algorithm does not terminate. However, using an additional stopping criteria, it can be used to compute arbitrarily good approximations of all (multiple) roots; see the remark at the end of Section~\ref{subsec:algorithm} for more details.} However, it may also return
isolating disks for some of the roots contained in $2\mathcal{B}$, the square centered at $\mathcal{B}$
and of twice the size as $\mathcal{B}$. Our approach is based on Weyl's quad tree construction, that is, we
recursively subdivide $\mathcal{B}$ into smaller sub-squares and discard squares for which we can show that they do not contain a root of $F$. The remaining squares are clustered into maximal connected components, which are tested for being isolating for a single root.

As exclusion and inclusion predicate, we propose a test based on Pellet's theorem and Graeffe
iteration. We briefly outline our approach and refer to Section~\ref{sec:rootcounting} for more details.
Let $\Delta:=\Delta(m,r)\subset\CC$ be the disk centered at $m$ with radius $r$, and define $\lambda\cdot\Delta(m,r):=\Delta(m,\lambda\cdot r)$ for arbitrary $\lambda\in\RR^+$.
According to Pellet's theorem~\cite{rahman2002analytic}, the number of roots contained in $\Delta$
equals $k$ if the absolute value of the $k$-th coefficient of
$F_{\Delta}(x):=F(m+rx)$ dominates the sum of the absolute values of all other coefficients.
For $k=0$ and $k=1$, it has been known~\cite{Yap:2011:SBE:1993886.1993938,Yakoubsohn2000} that Pellet's theorem applies if the smaller disk $n^{-e_1}\cdot\Delta$ contains $k$ roots and the larger disk $n^{e_2}\cdot\Delta$ contains no further root, where $e_1$ and $e_2$ are suitable positive constants. In the paper at hand, we derive constants $e_1$ and $e_2$ such that the latter result stays true for all $k$. As a consequence, using only $O(\log\log n)$ Graeffe iteration for
iteratively squaring the roots of $F_{\Delta}$, we can replace the factors $n^{e_1}$ and $n^{e_2}$ by  the constants $\rho_1:=\frac{2\sqrt{2}}{3}\approx 0.94$ and $\rho_2:=\frac{4}{3}$. More precisely, we derive a test that allows us to exactly count the number of
roots contained in a disk $\Delta$, provided that the disks $\rho_2\cdot \Delta$ and $\rho_1\cdot\Delta$ contain the same
number of roots. If the latter requirement is not fulfilled, the test might return the value $-1$, in which case we have no information on the number of roots in $\Delta$.
Since, in general, the latter test requires exact arithmetic and since we can only ask for approximations of the coefficients of $F$, there might be cases, where we either cannot decide the outcome of our test or where an unnecessarily high precision is needed. Based on the idea of so-called soft-predicates~\cite{DBLP:conf/cie/YapS013},
we formulate a variant of the above test, which we denote by $\mathbf{T_*}$, that uses only approximate arithmetic and runs with a precision demand that is directly related to the maximal absolute value that $F$ takes on the disk $\Delta$.

In the subdivision process, we inscribe each square in a corresponding disk and run the $\mathbf{T_*}$-test
on this disk. Squares, for which the test $\mathbf{T_*}$ yields $0$, do not contain a root and can thus be discarded. The remaining
squares are clustered into maximal connected components, which we also inscribe in corresponding disks. If the
$\mathbf{T_*}$-test yields $1$ for such a disk, we discard the cluster and store the disk as an isolating disk.
Otherwise, we keep on subdividing each square into four equally sized sub-squares and proceed.
This approach on its own already yields a reasonably efficient algorithm,
however, only linear convergence against the roots can be achieved. As a
consequence, there might exist
long paths in the subdivision tree with no branching (there are at most
$n-1$ branching nodes).
For instance, when considering the benchmark problem, there exist polynomials (e.g.~so called Mignotte polynomials having two roots with a very small distance to each other) for which the length of such a sequence is lower bounded by $\Omega(n\tau)$.
We show how to traverse such sequences in a much faster manner, that is reducing their length to $O(\log(n\tau))$ in the worst-case, via a regula falsi method, which combines Newton
iteration and square quartering. Our approach is inspired by the so-called quadratic interval refinement (QIR for
short) method proposed by Abbott~\cite{abbott-quadratic}. It combines the secant
method and interval bisection in order to further refine an interval that is already known to be isolating for a root. In~\cite{Sagraloff:2012:NMD:2442829.2442872,Sagraloff:2014:NAC:2608628.2608632,Sagraloff2015}, the QIR approach has been considerably refined by replacing the
secant method by Newton iteration (i.e. Schr\"oder's modified Newton operator for multiple roots). Compared to Abbott's original variant, this yields a method with quadratic convergence against clusters of roots during the isolation process. Our approach is similar to the one from~\cite{Sagraloff2015}, however, we use the $\mathbf{T}_*$-test instead of Descartes' Rule of Signs, which only applies to real intervals. Furthermore, the approach from~\cite{Sagraloff2015} uses fast approximate multipoint evaluation~\cite{Kirrinnis1998378,DBLP:journals/corr/abs-1304.8069} in order to determine subdivision points whose distance to the roots of $F$ is not too small. This is needed to avoid an unnecessarily large precision when using Descartes' Rule of Signs.
For our algorithm {\cisolate}, there is no need for (fast) approximate multipoint evaluation. We now state our first main theoretical result, which shows that our algorithm performs near-optimal with respect to the number of produced squares:

\begin{theorem*}
	Let $F$ be polynomial as in (\ref{def:polyF}) and suppose that $F$ is square-free. For isolating all complex roots of $F$, the algorithm $\mathbb{C}\textsc{Isolate}$ produces a number of squares bounded by
	\[
		\tilde{O}\left(n\cdot\log(n)\cdot\log\left(n\cdot\Gamma_F\cdot\LOG(\sigma_F^{-1})\right)\right),
	\]
	where we define $\LOG(x):=\max(1,\log |x|)$ for arbitrary $x\in\CC$, $\Gamma_F:=\LOG(\max_{i=1}^n|z_i|)$ the \emph{logarithmic root bound} and $\sigma_F:=\min_{(i,j):i\neq j}|z_i-z_j|$ the \emph{separation of $F$}.
\end{theorem*}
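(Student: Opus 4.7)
The plan is to partition the nodes of the subdivision tree $\TTT$ produced by $\cisolate$ into two classes: \emph{branching} nodes, where the connected component of active squares at the node splits into several sub-components at the next level (equivalently, where the underlying cluster of roots splits), and \emph{non-branching} nodes, which sit on paths of single-child components. Since $F$ is square-free and has $n$ roots, the associated cluster tree has at most $n$ leaves, hence at most $n-1$ internal branching events. The branching nodes therefore contribute only $O(n)$ to the total count, and the main work is to bound the total length of all non-branching paths.

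First I would establish the terminal depth of a non-branching path. By the analysis of the $\mathbf{T_*}$-test in Section~\ref{sec:rootcounting}, once the disk $\Delta$ inscribing the current component satisfies the two-sided Pellet condition, namely that $\rho_1\Delta$ and $\rho_2\Delta$ contain the same roots of $F$, the test correctly returns the number of roots in $\Delta$, and the corresponding component is either discarded (value $0$) or accepted as isolating (value $1$). Combining the logarithmic root bound $\Gamma_F$, which controls the largest scale at which a square in $\mathcal{B}$ can still carry roots, with the separation $\sigma_F$, which controls the smallest scale needed to distinguish two roots, I would show that every non-branching path must settle at depth at most $d=O(\log(n\cdot\Gamma_F\cdot\LOG(\sigma_F^{-1})))$. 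The factor $n$ enters through the $O(\log\log n)$ Graeffe iterations baked into $\mathbf{T_*}$, which shrink the Pellet constants from $n^{\pm O(1)}$ to the absolute constants $\rho_1,\rho_2$.

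Next I would bound the length of each non-branching path once the Newton/QIR acceleration is switched on. Without acceleration, a path of depth $d$ contributes $\Theta(d)$ nodes, yielding a total of $\Theta(n\cdot d)$, which is too weak by a factor of $d/\log d$. The acceleration applies Schr\"oder's modified Newton operator to a cluster whose multiplicity $k$ is certified by $\mathbf{T_*}$; once the current disk is close enough to the cluster, a Newton step reduces $\log(r/\diam(\text{cluster}))$ to its square root, i.e.\ converges quadratically. Following the QIR bookkeeping of Abbott~\cite{abbott-quadratic} as refined in~\cite{Sagraloff2015}, each attempted Newton step is either successful (certified a posteriori by $\mathbf{T_*}$ on the proposed small disk, in which case the quality parameter $N$ is doubled) or rejected (in which case $N$ is halved and one falls back to a quartering of the square). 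A potential-function argument then shows that the number of steps along a single path is $\tilde{O}(\log d)$, so that summing over the at most $O(n)$ non-branching paths and absorbing an additional $O(\log n)$ factor for the per-level overhead of maintaining and testing connected components of up to $n$ squares gives the claimed $\tilde{O}\bigl(n\log n\cdot\log(n\Gamma_F\LOG(\sigma_F^{-1}))\bigr)$.

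The main obstacle will be the amortized analysis of the interleaved Newton and quartering steps. Concretely, I must exhibit a potential of the form $\Phi\propto \log N+\log(r/\diam(\text{cluster}))$ which is non-decreasing under failed steps and at least doubles under successful ones, and simultaneously verify that a failed $\mathbf{T_*}$ call inside the Newton routine costs only $O(1)$ extra squares. A secondary care point is that a connected component may span up to $n$ squares at the same level: I would handle this by showing that its inscribed disk is never much larger than the diameter of the cluster it tracks, so that the depth bound on $d$ remains valid for the component's disk, and the $\mathbf{T_*}$ test on the component is charged consistently with the potential. Combining the amortized path bound with the $O(n)$ branching nodes and the Pellet-based depth bound then yields the theorem.
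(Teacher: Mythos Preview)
Your overall strategy matches the paper's: decompose the subdivision tree into branching nodes (the paper calls them \emph{special}) and chains of non-branching nodes, bound the number of special nodes by $O(n)$, and bound the length of each chain via the QIR/Newton mechanism. The paper formalises the latter through an abstract sequence lemma (Lemma~\ref{sequence}, imported from~\cite{Sagraloff2015}) together with a success criterion for the \textsc{NewtonTest} (Lemma~\ref{newtonsucceeds}), rather than an explicit potential $\Phi$, but the content of your proposed potential argument is the same.

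There are, however, two genuine gaps.

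First, your quantity $d$ is misidentified. You set $d=O(\log(n\Gamma_F\LOG(\sigma_F^{-1})))$, but this is already the \emph{final} per-chain bound $s_{\max}$, not the bisection depth. The bisection depth without Newton is $\Theta(\LOG(w(\mathcal{B}))+\LOG(\sigma_F^{-1}))$, which for Mignotte-type inputs is $\Theta(n\tau)$; the QIR analysis then shows each chain has length $O(\log n+\log\LOG(w(\mathcal{B}))+\log\LOG(\sigma_F^{-1}))$, i.e.\ logarithmic in \emph{that} depth. As written, your ``$\tilde O(\log d)$ per path'' would be doubly logarithmic in the right quantities, which is neither what you want nor what the QIR potential actually yields.

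Second, and more substantively, the theorem counts \emph{squares}, not tree nodes, and a single component $C$ may contain up to $9\,|\MMM(C^+)|\le 9n$ squares (Theorem~\ref{thm:termination}(e)). Your phrase ``absorbing an additional $O(\log n)$ factor for the per-level overhead of maintaining and testing connected components of up to $n$ squares'' does not explain why this blow-up is only $\log n$ rather than $n$. The paper closes this via an explicit map $\psi$ from squares to roots (Definition~\ref{def:maps}, Theorem~\ref{mapping}): for a square $B$ in a component $C$, one descends $\lceil\log(18n)\rceil$ steps along the canonical path of some root in $2B$ until the descendant component $C'$ satisfies $(C')^+\subset 4B$, and then sets $\psi(B):=\phi(C')$. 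Since only $O(1)$ squares of a fixed size can contain a given root in their $4$-enlargement, each root has preimage of size $O(s_{\max}\log n)$ under $\psi$, and summing over the $n$ roots gives the bound. Without this mapping argument (or an equivalent amortisation showing that most components are in fact small), your proposal does not yield the stated $\log n$ factor.
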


For the benchmark problem, the above bound simplifies to $O(n\log(n)\log(n\tau))$.
When running our algorithm on an arbitrary axis-aligned square $\mathcal{B}$, we obtain refined bounds showing that our algorithm is also adaptive with respect to the number of roots contained in some neighborhood of $\mathcal{B}$ as well as with respect to their geometric location. Namely, suppose that the enlarged square $2\mathcal{B}$ contains only simple roots of $F$, then we may replace $n$, $\Gamma_F$, and $\sigma_F$ in the bound in the above theorem by the number of roots contained in the enlarged square $2\mathcal{B}$, the logarithm of the width of $\mathcal{B}$, and the minimal separation of the roots of $F$ contained in $2\mathcal{B}$, respectively; see also Theorem~\ref{mapping}.

Finally, we give bounds on the the bit complexity of our approach as well as on the precision to which the coefficients of $F$ have to be provided:

\begin{theorem*}
	Let $F$ be a polynomial as in (\ref{def:polyF}) and suppose that $F$ is square-free. For isolating all complex roots of $F$, the algorithm {\cisolate} uses a number of bit operations bounded by
	\begin{align}
		\nonumber
		  & \tilde{O}\left(\sum\nolimits_{i=1}^n n\cdot(\tau_F+n\cdot \LOG (z_i)+\LOG (\sigma_F(z_i)^{-1})+\LOG (F'(z_i)^{-1}))\right)= \\
		  & \tilde{O}(n(n^2 + n \LOG(\Mea_F) + \LOG (\Disc_F^{-1}))),\nonumber
	\end{align}
	where we define $\tau_F:=\lceil\LOG\|F\|_{\infty} \rceil$, $\sigma_F(z_i):=\min_{j\neq i}|z_i-z_j|$ the \emph{separation of $z_i$}, $\Mea_F:=|a_n|\cdot\prod_{i=1}^n\max(1,|z_i|)$ the \emph{Mahler Measure}, and $\Disc_F$ the \emph{discriminant of $F$}.
	As input, the algorithm requires an $L$-bit approximation of $F$ with
	\begin{align*}
		L & =\tilde{O}\left(\sum\nolimits_{i=1}^n (\tau_F+n\cdot \LOG (z_i)+\LOG (\sigma_F(z_i)^{-1})+\LOG (F'(z_i)^{-1}))\right) \\
		  & =\tilde{O}(n^2 + n \LOG(\Mea_F) + \LOG (\Disc_F^{-1})).
	\end{align*}
\end{theorem*}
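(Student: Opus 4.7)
The plan is to combine the near-optimal node-count bound from the first main theorem with a per-node cost analysis, then aggregate via a charging argument that distributes the total work over the roots.

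At a single subdivision node corresponding to a disk $\Delta = \Delta(m,r)$, the algorithm performs a Taylor shift to form $F_\Delta(x) = F(m+rx)$, $O(\log\log n)$ Graeffe iterations on $F_\Delta$, and the soft Pellet test $\mathbf{T_*}$. Using the classical fast polynomial multiplication and Taylor shift routines, the bit cost at the node is $\tilde{O}(n L_\Delta)$, where $L_\Delta$ is the working precision demanded by $\mathbf{T_*}$. As noted in Section~\ref{sec:rootcounting}, this precision is controlled by the reciprocal of $\max_{z\in\Delta}|F(z)|$, giving
\[
  L_\Delta \;=\; \tilde{O}\bigl(\tau_F + n\LOG(m) + n\LOG(r) + \log \max\nolimits_{z\in\Delta}|F(z)|^{-1}\bigr).
\]
The same bound dictates the accuracy at which the coefficient oracle must be queried, which ultimately yields the stated bound on $L$.

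To aggregate into the total bit count I would charge every node to a root. For a node whose disk refines toward the simple root $z_i$, the elementary estimates $|F(z)| \ge |F'(z_i)|\cdot|z - z_i|/2$ and $|z - z_j| \ge \sigma_F(z_i)/2$ for $j \neq i$, valid once the disk is sufficiently isolated from the other roots, yield $\log \max_{z\in\Delta}|F(z)|^{-1} = \tilde{O}(\LOG(F'(z_i)^{-1}) + \LOG(\sigma_F(z_i)^{-1}))$. Each such node therefore costs $\tilde{O}(n(\tau_F + n\LOG(z_i) + \LOG(\sigma_F(z_i)^{-1}) + \LOG(F'(z_i)^{-1})))$ bit operations. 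Combined with the polylogarithmic per-root node count implied by the first main theorem (the total of $\tilde{O}(n\log n \log(n\Gamma_F\LOG(\sigma_F^{-1})))$ nodes, distributed among the $n$ roots), summation over $i$ gives the first expression in the statement. The compact form $\tilde{O}(n(n^2 + n\LOG(\Mea_F) + \LOG(\Disc_F^{-1})))$ then follows from $\sum_i \LOG(z_i) \le \LOG(\Mea_F)$, the identity $|a_n|^{n-2}\prod_i |F'(z_i)| = |\Disc_F|$ (which implies $\sum_i \LOG(F'(z_i)^{-1}) \le \LOG(\Disc_F^{-1}) + O(n)$ since $|a_n| \in (\tfrac14,1]$), and the discriminant inequality $\sum_i \LOG(\sigma_F(z_i)^{-1}) = O(\LOG(\Disc_F^{-1}) + n\LOG(\Mea_F))$.

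The main obstacle is handling the Newton (QIR) acceleration phase, where a chain of Newton steps shrinks candidate disks super-linearly and a naive accounting would let $L_\Delta$ grow geometrically along the chain. Following the strategy of~\cite{Sagraloff2015} developed for the real Descartes method, I would set up a potential argument in which each successful Newton step doubles the effective refinement of the cluster currently being tracked, while an unsuccessful step costs no more than a bisection at the same precision and triggers a quartering fallback. This bounds the number of expensive Newton steps charged to each root by $\tilde{O}(1)$ beyond the bisection contribution, so the per-root precision demand telescopes and the per-node analysis above applies uniformly, yielding both the bit-operation count and the required input precision $L$.
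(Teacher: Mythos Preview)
Your high-level plan (per-node cost $\tilde O(nL_\Delta)$, charge each node to a root, sum) matches the paper's architecture, and your reduction to $\Mea_F$ and $\Disc_F$ at the end is fine. But two steps of the charging argument are genuinely missing.

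First, the implicit step ``$n\LOG(m,r)\to n\LOG(z_i)$'' fails for large components near the origin: a component of width comparable to $2^{\Gamma_F}$ whose roots all have small modulus cannot have its $n\LOG(r)$ cost charged to one of those roots while still recovering $\sum_i n\LOG(z_i)$ in the total. The paper resolves this with the machinery of \emph{(weakly) centered} components and the modified maps $\hat\phi,\hat\psi$ (Definition~\ref{def:canonicalmap}, Lemma~\ref{lem:canonicalhit}): on centered components one charges instead to a root that has already left the central path---hence with modulus comparable to the component's width---or to a pseudo-root at the corner of $2\mathcal B$. Your sketch has no analogue of this. Second, the bound $|F(z)|\ge|F'(z_i)|\,|z-z_i|/2$ holds only where the linear Taylor term dominates, which is not the regime of most processed components (they typically contain several roots). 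The paper's Lemma~\ref{Fmbighelp} instead writes $|F(m')|=|F'(z_i)|\,|m'-z_i|\prod_{j\ne i}|m'-z_j|/|z_j-z_i|$ and bounds the ratios; for roots inside $C^+$ this contributes a factor $(\max\nolimits_1\lambda)^{|\MMM(C^+)|}$ with $\lambda=2^{\ell_C}/r$. In the \textsc{NewtonTest} one has $\lambda\approx N_C$, so the precision picks up an extra $|\MMM(C^+)|\log N_C$. Your ``potential argument'' does not explain how to sum this over all components; the paper's Lemma~\ref{bound:sumZNC} does so by combining $N_C\le 4\bigl(w(\anc^*(C))/w(C)\bigr)^2$ with the cluster inequality $|F'(z_i)|\le 2^{2n+\tau_F}\MAX(z_i)^n\, w(C)^{|\MMM(C^+)|/2}$ (valid whenever $C$ contains at least two roots), which converts $-|\MMM(C^+)|\log w(C)$ into $\LOG F'(z_i)^{-1}$ plus lower-order terms. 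That inequality is the real engine behind the Newton-cost bound and is absent from your sketch.
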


Again, we also give refined complexity bounds for the problem of isolating all roots of $F$ contained in some square $\mathcal{B}$, which show that the cost and the precision demand of our algorithm adapt to the hardness of the roots contained in a close neighborhood of the square.
For the benchmark problem, the above bound simplifies to $\tilde{O}(n^3+n^2\tau)$. It is interesting that our bounds on the bit complexity for isolating all complex roots as achieved by {\cisolate} exactly match the corresponding bounds for the complex root isolation algorithm from~\cite{MSW-rootfinding2013}, which uses Pan's method for approximate polynomial factorization.

\subsection{Related Work}

As already mentioned at the beginning, there exists a huge literature on computing the roots of a univariate polynomial. This makes it simply impossible to give a comprehensive overview without going beyond the scope of a research paper, hence we suggest the interested reader to consult some of the
excellent surveys~\cite{McNamee2012239,McNamee:2002,McNamee2007,McNamee-Pan2013,Pan:history}. Here, we mainly focus on a comparison of
our method with other existing subdivision methods for real and complex root finding.

For real root computation, subdivision algorithms have become extremely popular due to their simplicity, ease of implementation, and  practical
efficiency. They have found their way into the most popular computer algebra systems, where they constitute the default
routine for real root computation.
Prominent examples of subdivision methods are the Descartes method~\cite{Collins-Akritas,eigenwillig-phd,Eigenwillig2005,rouillier-zimmermann:roots:04,Sagraloff:2012:NMD:2442829.2442872,Sagraloff2015,Sagraloff2014DSC,Sharma:2015:NOS:2755996.2756656}, the Bolzano method\footnote{The Bolzano method is based on Pellet's theorem (with $k=0$).
	It is used to test an interval $I$ for roots of the input polynomial $F$ and its derivative $F'$. $I$
	contains no root if Pellet's theorem applies to $F$. If it applies to $F'$, the function $F$ is monotone on
	$I$, and thus $I$ is either isolating for a root or it contains no root depending on whether there is a
	sign change of $F$ at the endpoints of $I$ or not.}~\cite{Becker12,DBLP:journals/jsc/BurrK12,Yap:2011:SBE:1993886.1993938}, the Sturm method~\cite{davenport:85,du-sharma-yap:sturm:07}, and the continued fraction method~\cite{akritas-strzebonski:comparison:05,sharma,tsigaridas13,te-cf:08}. From a high-level point of view, all of the above mentioned
methods essentially follow the same approach:
Starting from a given interval $I_0$, they recursively subdivide $I_0$ to search for the roots contained in $I_0$. Intervals that are shown to contain no root are discarded, and
intervals that are shown to be isolating for a simple root are returned. The two main differences between these algorithms are the choice of the exclusion
predicate and the way how the intervals are subdivided. For the \emph{real benchmark problem} of isolating all
real roots of a polynomial of degree $n$ with integer coefficients of bit size $\tau$ or less, most of the above
methods need $\tilde{O}(n\tau)$ subdivision steps and their worst-case bit complexity is bounded by
$\tilde{O}(n^4\tau^2)$. The bound on the number of subdivision steps stems from the fact that the product
of the separation of all roots is lower bounded by $2^{-\tilde{O}(n\tau)}$ and that only linear convergence
to the roots is achieved. By considering special polynomials (e.g., Mignotte polynomials) that have
roots with separation $2^{-\Omega(n\tau)}$, one can further show that the bound $\tilde{O}(n\tau)$ is even
tight up to logarithmic factors; see~\cite{Collins,Eigenwillig2005}. When using exact
arithmetic, the cost for each subdivision step is bounded by $\tilde{O}(n^3\tau)$ bit operations, which is
due to the fact that $n$ arithmetic operations with a precision of $\tilde{O}(n^2\tau)$ are performed.
In~\cite{Sagraloff2014DSC,Sagraloff2015}, it has been shown for the Descartes method that it suffices to work with a precision of size $\tilde{O}(n\tau)$ in order to isolate all real roots, a fact that has already been empirically verified in~\cite{rouillier-zimmermann:roots:04}.
This yields a worst-case bit complexity
bound of size $\tilde{O}(n^3\tau^2)$ for a modified Descartes method, which uses approximate instead of exact arithmetic.
For a corresponding modified variant of the Bolzano method~\cite{Becker12}, a similar argument yields the
same bound. Recent work~\cite{Sagraloff:2012:NMD:2442829.2442872,Sagraloff2015,Sharma:2015:NOS:2755996.2756656} combines the Descartes
method and Newton iteration, which yields algorithms with quadratic convergence in almost all iterations. They use only $O(n\log(n\tau))$ subdivision steps, which is near optimal. The methods
from~\cite{Sagraloff:2012:NMD:2442829.2442872,Sharma:2015:NOS:2755996.2756656} work for integer polynomials only and each computation is carried out with exact arithmetic. An amortized analysis of their cost yields the bound $\tilde{O}(n^3\tau)$ for the bit
complexity. \cite{Sagraloff2015} introduces an algorithm that improves upon the methods from~\cite{Sagraloff:2012:NMD:2442829.2442872,Sharma:2015:NOS:2755996.2756656} in two points. First, it can be used to compute the real roots of a polynomial
with arbitrary real coefficients. Second, due to the use of approximate arithmetic, its precision demand is
considerably smaller. For the real benchmark problem, it achieves the bit complexity bound $\tilde{O}(n^3+n^2\tau)$. More precisely, it needs $\tilde{O}(n\log(n\tau))$ iterations, and, in each iteration, $\tilde{O}(n)$ arithmetic operations are carried out with an average precision of size $\tilde{O}(n+\tau)$. This essentially matches the bounds achieved by our algorithm {\cisolate} for complex root isolation.
{\cisolate} shares common elements with the method from~\cite{Sagraloff2015}, however we had to develop novel tools to accommodate the fact that our search area
is now the entire complex plane and not the real axis. In particular, we replaced Descartes' Rule of
Signs, which serves as the test for real roots in~\cite{Sagraloff2015}, by our novel test $\mathbf{T_*}$ for counting the number of complex roots in a disk.

For computing the complex roots, there also exist a series of subdivision methods (e.g.~\cite{Collins:1992:EAI:143242.143308,mt-mega-2009,Mourrain2002612,Pan2000213,Pinkert:1976,DBLP:journals/jc/Renegar87,Yap:2011:SBE:1993886.1993938,Wilf:1978,Yakoubsohn2005652}); however, only a few algorithms have been analyzed in a way that allows a direct comparison with our method.
The earliest algorithm most relevant to our work is Weyl's~\cite{Weyl}.
He proposed a subdivision based algorithm
for computing a $2^{-b}$-relative approximation to all the roots of a polynomial, which is a slightly different
problem then root isolation. The
inclusion and exclusion tests are based on
estimating the distance to a nearest root from the center of a box, or what are called
proximity tests in the literature. The {\em arithmetic complexity} of the algorithm
is ${O}(n^3b\log n)$, when not using asymptotically fast polynomial arithmetic.
The problem with Weyl's approach, indeed with any approach based on subdivision,
is the linear convergence to the roots. The convergence factor was improved by Renegar
\cite{DBLP:journals/jc/Renegar87} and Pan \cite{Pan2000213} by considering a combination
of subdivision with Newton iteration.
Renegar~\cite{DBLP:journals/jc/Renegar87} uses the Schur-Cohn
algorithm~\cite[Section~6.8]{henrici1974} as an exclusion test (rather than the proximity tests of Weyl).
In addition, he introduces a subroutine for approximating the winding
number of a polynomial around the perimeter of some disk, and thus
a method for counting the number of roots of the polynomial in a disk. Once the number $k$ of
roots in a disk is known, a fixed number (depending on the degree and the radius of
the disk) of Newton steps are applied to the $(k-1)$-th derivative of the polynomial,
which guarantees quadratic convergence to
a cluster containing $k$ roots. The arithmetic complexity of
Renegar's algorithm for the problem of approximating the roots is $O(n^2 \log b + n^3\log n)$
without using asymptotically fast polynomial arithmetic.
The improvement over Weyl's result is basically due to the 
quadratic convergence obtained by the use of Newton iteration.

Pan \cite{Pan2000213} describes another modification of Weyl's approach that has arithmetic cost
$O((n^2\log n) \log (bn))$, which is an
improvement over Renegar's algorithm since the dependence
on the degree is a quadratic factor in $n$. The exclusion test is based on a combination
of Turan's proximity test \cite{turan1984new} and Graeffe iteration. Note that the asymptotic complexity
of these tests is $\wt{O}(n)$, whereas a straightforward implementation of the
Schur-Cohn test takes $O(n^2)$ arithmetic operations; the difference in the cost of these
exclusion tests is the reason behind the the improvement in the complexity estimate of Pan's algorithm
compared to Renegar's.
The algorithm in \cite{Pan2000213} recursively interchanges
Schr\"oder's iteration (a modification of Newton's iteration
to handle multiple roots) and Weyl's subdivision process.
As in the case of Renegar, the former is needed to approximate a cluster of roots, and if that fails to happen,
the subdivision is used to break up the set of roots into smaller subsets, and continue
recursively.
The transition between the iteration phase and the subdivision process is based
on estimating the root radii \cite[Section~14]{Schoenhage82}, and is perhaps more adaptive
than Renegar's approach. To estimate the number of roots
inside a disk (which is needed to estimate the size of a cluster), Pan uses a combination of the
winding number algorithm along with Graeffe iteration to ensure that there are no roots close
to the boundary of the disk; as suggested by Pan, one can alternatively use the root radii algorithm
without affecting the complexity significantly.
The analysis of the algorithm has two steps. First, is to bound the number
of boxes computed in the subdivision phase. This is done by considering the connected components
of the boxes and bounding the number of boxes in each component in terms of the number of roots
inside a slight scaling of the smallest disk containing the component; in our case, the bound
on the number of boxes is obtained by mapping the components
to appropriate roots (see Theorem~\ref{mapping}); the resulting bound
is comparable in both cases (see (\cite[Prop.~8.3]{Pan2000213} in Pan
and Theorem~\ref{thm:treesize} below).
The second step of the analysis shows that for
certain well separated clusters Newton iteration gives us quadratic convergence to the cluster
\cite[Lem.~10.6]{Pan2000213};
an analogous result is also derived by Renegar \cite[Cor.~4.5]{DBLP:journals/jc/Renegar87},
and by us (Lemma~\ref{newtonsucceeds}).
Some of the key differences between the approach in this paper and Pan's \cite{Pan2000213}
are the following:
we use Pellet's test combined with Graeffe iteration for both the exclusion test and detecting a cluster;
we use a modification of the QIR method \cite{abbott-quadratic} for multiple roots, which is more
adaptive in transitioning between the quadratic convergence and subdivision phases.
In terms of the results derived, perhaps the most important difference is that
we bound the {\em bit complexity} of our algorithm. In comparison, neither Renegar
nor Pan analyze the precision demand or the Boolean complexity of
their algorithms.

Similar to our method, Yakoubsohn~\cite{Yakoubsohn2005652} combines Weyl's quad tree approach and a test for roots based on Pellet's theorem. However, since only an exclusion predicate (based on Pellet's theorem with $k=0$) is considered but no additional test to verify that a region is isolating,
his method does not directly compute isolating regions but arbitrary good approximations of the complex roots. In~\cite{Yap:2011:SBE:1993886.1993938}, we introduced a variant of Yakoubsohn's method, denoted by \textsc{Ceval}, that computes isolating disks for the complex roots of an integer polynomial. There, an additional inclusion test (based on Pellet's theorem with $k=1$) has been used to show that a disk is isolating for a root. The methods from~\cite{Yap:2011:SBE:1993886.1993938,Yakoubsohn2005652} only consider square-quartering, and thus nothing better than linear convergence can be achieved. For the benchmark problem, the algorithm from~\cite{Yap:2011:SBE:1993886.1993938} needs $\tilde{O}(n^2\tau)$ subdivision steps and its cost is bounded by $\tilde{O}(n^4\tau^2)$ bit operations.
Yakoubsohn further mentions how to improve upon his method by combining the exclusion predicate with Graeffe iterations, which yields an improvement by a factor of size $n$ with respect to the total number of produced squares. In~\cite{Giusti2005}, an extension of Pellet's theorem for analytic functions has been considered and thoroughly analyzed. The authors also derive further criteria to detect clusters of roots of such functions, and to determine their multiplicities and diameters. This allows for the computation of suitable starting points for which Schr\"oder's modified Newton operator yields quadratic convergence to the cluster.
In contrast, we follow the approach of combining Pellet's theorem and Graeffe iteration to derive a simple test for detecting clusters of roots. However, we do neither compute the diameter of such a cluster nor do we consider any additional computations to check whether quadratic convergence to the cluster can be achieved. Instead, we rely on a trial-error approach that performs Schr\"oder's modified Newton operator by default and then checks for success.
 We show that this can be done in a certified manner such that quadratic convergence to clusters is guaranteed for all but only a small number of iterations, where our method falls back to bisection. Our approach works well with polynomials whose coefficients can only be approximated and we derive precise bounds on the precision demand in the worst-case.

In our previous work~\cite{DBLP:conf/cie/YapS013},
we provided the first complete algorithm for computing $\epsilon$-clusters
of roots of analytic functions. Like the present work, it is a subdivision
approach based on the $T_k$-test of Pellet; but unlike this paper,
it does not have quadratic convergence nor complexity analysis.
In~\cite{DBLP:conf/cie/YapS013},
we assumed that an analytic function is given when we also
have interval evaluation of its derivatives of any desired order;
this natural assumption is clearly satisfied by most common analytic functions.
The algorithm from~\cite{DBLP:conf/cie/YapS013} does not compute isolating
disks but arbitrary small regions containing clusters of roots, hence being also
applicable to functions with multiple roots and for which separation bounds are
not known.

\subsection{Structure of the Paper and Reading Guide}

In Section~\ref{sec:definitions}, we summarize the most important definitions and notations, which we will use throughout the paper. We suggest the reader to print a copy of this section in order to quickly refer to the definitions.
We introduce our novel test $\T$ for counting the roots in a disk in Section~\ref{sec:rootcounting}. The reader who is willing to skip all details of this section and who wants to proceed directly with the main algorithm should only consider the summary given at the beginning of Section~\ref{sec:rootcounting}, where we give the main properties of the $\T$-test.
The algorithm {\cisolate} is given in Section~\ref{sec:algorithm}. Its analysis is split into two parts. In Section~\ref{subsec:treesize}, we derive bounds on the number of iterations needed by our algorithms, whereas, in Section~\ref{subsec:bitcomplexity}, we estimate its bit complexity. Some of the (rather technical) proofs are outsourced to an appendix, and we recommend to skip these proofs in a first reading of the paper. In Section~\ref{sec:conclusion}, we summarize and hint to some future research.


\section{Definitions and a Root Bound}\label{sec:definitions}

Let $F$ be a polynomial as defined in (\ref{def:polyF}) with complex roots $z_1,\ldots,z_n$.
We fix the following definitions and denotations:
\begin{itemize}

	\item As mentioned in the introduction, we assume the existence of an oracle that provides arbitrary good approximations of the coefficients. More precisely, for an arbitrary non-negative integer $L$, we may ask the oracle for dyadic approximations $\tilde{a}_i=\frac{m_i}{2^{L+1}}$ of the coefficients $a_i$ such that $m_i\in\ZZ+i\cdot\ZZ\in\mathbb{C}$ are Gaussian integers and $|a_k-\tilde{a}_k|<2^{-L}$ for all $k=0,\ldots,n$. We also say that $\tilde{a}_k$ approximates $a_k$ to $L$ bits after the binary point, and a corresponding polynomial $\tilde{F}=\tilde{a}_0+\cdots+\tilde{a}_n\cdot x^n$ with coefficients fulfilling the latter properties is called an \emph{(absolute) $L$-bit approximation of $F$}. It is assumed that the cost for asking the oracle for such an approximation is the cost for reading the approximations.

	\item For any non-negative integer $k$, we denote by $[k]$
	      the set $\set{1\ldots k}$ of size $k$.
	      For any set $S$ and any non-negative integer $k$, we write $\binom{S}{k}$
	      for the set of all subsets of $S$ of size $k$.
	\item $\MAX(x_1,\ldots,x_k):=\max(1,|x_1|,\ldots,|x_k|)$ for arbitrary $x_1,\ldots,x_k\in\mathbb{C}$, $\log \coloneqq \log_2$ the binary logarithm, and
	      \[
	      	\LOG(x_1,\ldots,x_k) \coloneqq \lceil \MAX(\log \MAX(x_1,\ldots,x_k)) \rceil.
	      \]
	      Notice that, if $|z| \le 2$ for some $z\in\mathbb{C}$, then $\LOG(z)$ is $1$. Otherwise, $\LOG(z)$ equals $\log |z|$ rounded up to the next integer.
	\item $\|F\|_{\infty}=\max\{|a_k|: k=0,\ldots, n\}$ denotes the \emph{infinity-norm of $F$}. We further define $\tau_F:=\LOG (\|F\|_{\infty})$, which bounds the number of bits before the binary point in the binary representation of any coefficient of $F$.
	\item $\Gamma_F:=\LOG (\max_{i=1}^n |z_i|)$ is defined as the \emph{logarithmic root bound of $F$}.
	\item $\operatorname{Mea}_F:=|a_n|\cdot\prod_{i=1}^n \MAX(z_i)$ is defined as the \emph{Mahler measure of $F$}.
	\item $\sigma_F(z_i):=\min_{j\neq i} |z_i-z_j|$ is defined as the \emph{separation of the root $z_i$}
	      and $\sigma_F:=\min_{i=1}^n \sigma_F(z_i)$ as the \emph{separation of $F$}.
	\item For an arbitrary region $\mathcal{R}\subset\mathbb{C}$ in the complex space, we define $\sigma_{F}(\mathcal{R}):=\min_{i:z_i\in\mathcal{R}}\sigma_F(z_i)$, which we call the \emph{separation of $F$ restricted to $\mathcal{R}$}. We further denote by
	      $\mathcal{Z}(\mathcal{R})$ the set of all roots of $F$ that are contained in $\mathcal{R}$, and by
	      $\Mea_F(\mathcal{R}):=|a_n|\cdot\prod_{z_i\in \mathcal{Z}(\mathcal{R})} \MAX(z_i)$ the
	      \emph{Mahler measure of $F$ restricted to $\mathcal{R}$.}
	\item We denote the interior of a disk in the complex plane with center $m\in\mathbb{C}$ and radius $r\in\mathbb{R}^+$ by $\Delta=\Delta(m,r)$. For short, we also write $\lambda\cdot\Delta$ to denote the disk $\Delta(m,\lambda\cdot r)$ that is centered at $m$ and scaled by a factor $\lambda\in\mathbb{R}^+$. We further use $F_{\Delta}(x)$ to denote the shifted and scaled polynomial $F(m+r\cdot x)$, that is,  $F_{\Delta}(x):=F(m+r\cdot x)$.
	\item A disk \emph{$\Delta$ is isolating for a root $z_i$} of $F$ if it contains $z_i$ but no other root of $F$. For a set $S$ of roots of $F$ and positive real values $\rho_1$ and $\rho_2$ with $\rho_1\le 1\le \rho_2$, we further say that a disk \emph{$\Delta$ is $(\rho_1,\rho_2)$-isolating for $S$} if $\rho_1\cdot\Delta$ contains exactly the roots contained in $S$ and $\rho_2\cdot\Delta\setminus \rho_1\cdot\Delta$ contains no root of $F$.
	\item Throughout the paper, we only consider squares
	      \[
	      	B=\{z=x+i\cdot y\in\mathbb{C}: x\in [x_{\min},x_{\max}]\text{ and } y\in [y_{\min},y_{\max}]\}
	      \]
	      in the complex space that are
	      \emph{closed, axis-aligned, and of width $w(B)=2^\ell$ for some $\ell\in\mathbb{Z}$} (i.e.,
	      $|x_{\max}-x_{\min}|=|y_{\max}-y_{\min}|=2^{\ell}$), hence, for brevity, these properties are not peculiarly mentioned. Similar as for disks, for a $\lambda\in\mathbb{R}^+$, $\lambda\cdot B$ denotes the scaled square of size $\lambda\cdot 2^{\ell}$ centered at $B$.
\end{itemize}

According to Cauchy's root bound (e.g.~see~\cite{yap-fundamental}), we have $|z_i|\le 1+\max_{i=0}^n \frac{|a_i|}{|a_n|}< 1+4\cdot 2^{\tau_F}$, and thus $\Gamma_F=O(\tau_F)$. In addition, it holds that $$\tau_F\le \LOG (2^n\cdot \Mea_F)\le n(1+\Gamma_F)\le 2n\Gamma_F.$$
Following~\cite[Theorem 1]{MSW-rootfinding2013} (or~\cite[Section 6.1]{Sagraloff2014DSC}), we can compute an integer approximation $\tilde{\Gamma}_{F}\in\mathbb{N}$ of $\Gamma_{F}$ with
$\Gamma_{F}+1\le \tilde{\Gamma}_{F}\le\Gamma_{F}+8\log n+1
$
using $\tilde{O}(n^{2}\Gamma_{F})$ many bit operations. For this, the coefficients of $F$ need to be approximated to $\tilde{O}(n\Gamma_F)$ bits after the binary point. From $\tilde{\Gamma}_{F}$, we then immediately derive an integer $\Gamma=2^{\gamma}$, with $\gamma:=\lceil \log \tilde{\Gamma}_{F}\rceil\in\mathbb{N}_{\ge 1}$, such that
\begin{align}\label{def:Gamma}
	\Gamma_{F}+1\le \tilde{\Gamma}_{F}\le \Gamma\le 2\cdot\tilde{\Gamma}_{F} \le 2\cdot(\Gamma_{F}+8\log n+1).
\end{align}
It follows that $2^{\Gamma}=2^{O(\Gamma_{F}+\log n)}$ is an upper bound for the modulus of all roots of $F$, and thus once can always restrict the search for roots to the set of all complex numbers of absolute value of at most $2^{\Gamma}$.


\section{Counting Roots in a Disk}\label{sec:rootcounting}


In this section, we introduce the
$\T(\Delta)$-test, which constitute our main ingredient to count the numbers of roots of $F$ in a given disk $\Delta$.  Here, we briefly summarize the main properties of the $\T(\Delta)$-test. The reader willing to focus on the algorithmic details of the root isolation algorithm is invited to read the following summary and skip the remainder of this section on a first read.

\begin{itemize}
	\item For a given polynomial $F$ as in (\ref{def:polyF}) and a disk $\Delta$, the $\T(\Delta)$-test always returns an integer $k\in \{-1,0,1,\ldots,n\}$. If $k\ge 0$, then $\Delta$ contains exactly $k$ roots of $F$. If $k=-1$, no further information on the number of roots in $\Delta$ can be derived; see Lemma~\ref{softtest:success2}, part~(\ref{softtest:secondpart}).
	\item If $\Delta$ is $(\rho_1,\rho_2)$-isolating for a set of $k$ roots of $F$, where $\rho_1=\frac{2\sqrt{2}}{3}\approx 0.94$ and $\rho_2=\frac{4}{3}$, then $\T(\Delta)$ returns $k$, see Lemma~\ref{softtest:success2}, part~(\ref{softtest:firstpart}).
	      In particular, $\T(\Delta)$ returns $0$ if $\frac{4}{3}\cdot\Delta$ contains no root.
	\item The cost for the $\T(\Delta)$-test is bounded by
	      \begin{align*}
	      	  & \tilde{O}(n(\tau_F+n\LOG(m,r)+\LOG(\|F_{\Delta}\|_{\infty}^{-1}))) \\
	      	  & \hspace{3cm}=
	      	\tilde{O}(n(\tau_F+n\LOG(m,r)+\LOG((\max_{z\in \Delta}|F(z)|)^{-1})))
	      \end{align*}
	      bit operations, and thus
	      directly related to the size of $\Delta$ and the maximum absolute value that $F$ takes on the disk $\Delta$. For this, the test requires an $L$-bit approximation of $F$, with
	      \begin{align*}
	      	L & =\tilde{O}(\tau_F+n\LOG(m,r)+\LOG(\|F_{\Delta}\|_{\infty}^{-1}))      \\
	      	  & =	\tilde{O}(\tau_F+n\LOG(m,r)+\LOG((\max_{z\in \Delta}|F(z)|)^{-1})),
	      \end{align*}
	      see Lemma~\ref{allkcost}.
	      Here, we used that $\max_{z\in\Delta}|F(z)|\le (n+1)\cdot \|F_{\Delta}\|_{\infty}$ as shown in~\eqref{formula:FinftyDelta} in the proof of Theorem~\ref{graeffe thm}.
\end{itemize}

\subsection{Pellet's Theorem and the \texorpdfstring{$T_k$}{Tk}-Test}\label{sec:pellet}

In what follows, let $k$ be an integer with $0\le k\le n=\deg F$, and let
$K\in\RR$ with $K\ge 1$. We consider the following test, which allows us to compute the size of a cluster of roots contained in a disk $\Delta(m,r)$:

\begin{definition}[The $T_k$-Test]
	\emph{ For a polynomial $F\in\CC[x]$, the $T_k$-test on a disk $\Delta:=\Delta(m,r)$ with parameter $K$ holds} if
	\begin{align}
		T_k(m,r,K,F):\quad
		\left| \frac{F^{(k)}(m) r^k }{k!} \right|
		>
		K\cdot \sum_{i\neq k} \left| \frac{F^{(i)}(m) r^i }{i!} \right| \label{Tktest}
	\end{align}
	or, equivalently, if $F^{(k)}(m)\neq 0$ and
	\begin{align}
		T_k(m,r,K,F):\quad
		\sum_{i<k} \left| \frac{F^{(i)}(m) r^{i-k} k!}{F^{(k)}(m) i!} \right|
		+
		\sum_{i> k} \left| \frac{F^{(i)}(m) r^{i-k} k!}{F^{(k)}(m) i!} \right|
		< \frac{1}{K}.\label{Tktest:splitsum}
	\end{align}
\end{definition}

Mostly, we will write $T_k(\Delta,K,F)$ for $T_k(m,r,K,F)$, or simply $T_k(\Delta,K)$ if it is clear from the context which polynomial $F$ is considered.
Notice that if the $T_k$-test succeeds for some parameter $K=K_0$, then it also succeeds for any $K$ with $K\le K_0$. Clearly, $T_k(m,r,K,F)$ is equivalent to $T_k(0,1,K,F_{\Delta})$, with $F_{\Delta}(x):=F(m+r\cdot x)$.

The following result is a direct consequence of Pellet's theorem, and, in our algorithm, it will turn out to be crucial in order to compute the size of a cluster of roots of $F$; see~\cite[Section 9.2]{rahman2002analytic} or \cite{DBLP:conf/cie/YapS013} for a proof.

\begin{theorem}\label{thm:pellet}
	If $T_k(m,r,K,F)$ holds for some $K\in\mathbb{R}$ with $K\ge 1$ and some $k\in\{0,\ldots,n\}$, then $\Delta(m,r)$ contains exactly $k$ roots of $F$ counted with multiplicities.
\end{theorem}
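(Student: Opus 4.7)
The plan is to deduce the statement from Rouché's theorem, applied to a scaled and shifted version of $F$. The key observation is that the quantities $F^{(i)}(m)\,r^i/i!$ appearing in the $T_k$-test are precisely the Taylor coefficients of $F_{\Delta}(x) := F(m+rx)$ at $0$. Writing $F_{\Delta}(x)=\sum_{i=0}^n c_i x^i$ with $c_i = F^{(i)}(m)\,r^i/i!$, condition~\eqref{Tktest} becomes
\[
|c_k| \;>\; K\cdot \sum_{i\neq k}|c_i| \;\ge\; \sum_{i\neq k}|c_i|,
\]
since $K\ge 1$. This inequality is tailor-made for comparing $F_{\Delta}$ with the monomial $c_k x^k$ on the unit circle.

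First I would fix the two holomorphic functions $g(x):=c_k x^k$ and $h(x):=F_{\Delta}(x)-c_k x^k = \sum_{i\neq k}c_i x^i$ and compare them on the boundary $\partial\Delta(0,1)=\{|x|=1\}$. On this circle one has $|g(x)|=|c_k|$, while
\[
|h(x)| \;\le\; \sum_{i\neq k}|c_i|\cdot|x|^i \;=\; \sum_{i\neq k}|c_i| \;<\; |c_k| \;=\; |g(x)|.
\]
Hence Rouché's theorem applies and $g$ and $g+h=F_{\Delta}$ have the same number of zeros, counted with multiplicity, inside the open unit disk. Since $g(x)=c_k x^k$ has exactly $k$ zeros there (all at the origin, noting $c_k\neq 0$ because the right-hand side of the $T_k$-test is non-negative and the left-hand side strictly exceeds it), $F_{\Delta}$ also has exactly $k$ zeros in $|x|<1$.

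Finally I would transport the conclusion back to $F$ via the affine change of variable $x\mapsto m+rx$, which is a bijection from $\{|x|<1\}$ onto $\Delta(m,r)$ and preserves multiplicities of zeros. This yields that $F$ has exactly $k$ roots (with multiplicity) in $\Delta(m,r)$, as claimed. The only delicate point to mention is the equivalence between~\eqref{Tktest} and~\eqref{Tktest:splitsum}: the latter formulation implicitly requires $F^{(k)}(m)\neq 0$, which is automatic from the strict inequality in~\eqref{Tktest}, so there is no ambiguity. No step presents a real obstacle; the proof is essentially a one-line application of Rouché's theorem once the Taylor-coefficient interpretation of the $T_k$-test is made explicit.
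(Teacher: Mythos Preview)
Your proof is correct and is essentially the standard Rouch\'e argument behind Pellet's theorem. The paper itself does not give a proof but simply cites Pellet's theorem and external references; your argument is precisely what those references contain, so there is no meaningful difference in approach.
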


We derive criteria on the locations of the roots $z_1,\ldots,z_n$ of $F$ under which the $T_k$-test is guaranteed to succeed:

\begin{theorem}
	\label{tktest thm}
	Let $k$ be an integer with $0\le k\le n=\deg(F)$, let $K\in\RR$ with $K\ge 1$, and let $c_1$ and $c_2$ be arbitrary real values fulfilling
	\begin{align}\label{def:constants}
		c_2 \cdot n\cdot \ln\left(\frac{1+2K}{2K}\right)\ge c_1 \cdot n\ge \frac{\MAX(k)}{\ln(1+\frac{1}{8K})}.
	\end{align}
	For a disk $\Delta=\Delta(m,r)$, suppose that there exists a real $\lambda$ with

	\[
		\lambda\ge\max(4c_2\cdot \MAX(k)\cdot n^3,16K\cdot\MAX(k)^2\cdot n)
	\]
	such that $\Delta$ is $(1,\lambda)$-isolating for the roots $z_1,\ldots, z_k$ of $F$, then $T_k(c_1n\cdot\Delta,K,F)$ holds.
\end{theorem}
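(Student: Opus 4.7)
The plan is to reduce to the unit-disk case, factor $F$ into its ``inner'' and ``outer'' parts, and bound the coefficient sums by Cauchy products of majorizing generating functions. By the substitution $x=m+ry$, the test $T_k(c_1 n\cdot\Delta,K,F)$ is equivalent to $T_k(0,c_1 n,K,F_\Delta)$, so I may assume $m=0$, $r=1$; then $|z_i|\le 1$ for $i\le k$ and $|z_i|\ge\lambda$ for $i>k$. Set $r':=c_1 n$ and $f_j:=F^{(j)}(0)/j!$; the goal is to prove $|f_k|(r')^k>K\sum_{j\ne k}|f_j|(r')^j$. I factor $F=G\cdot H$ with $G(x):=\prod_{i\le k}(x-z_i)$ monic of degree $k$ and $H(x):=a_n\prod_{i>k}(x-z_i)$, writing $G=\sum_\ell g_\ell x^\ell$, $H=\sum_s h_s x^s$. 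Standard bounds give $|g_\ell|\le\binom{k}{\ell}$ (with $g_k=1$), and the identity $h_s/h_0=(-1)^s e_s(z_{k+1}^{-1},\ldots,z_n^{-1})$ combined with $|z_i^{-1}|\le 1/\lambda$ yields $|h_s|\le|h_0|\binom{n-k}{s}\lambda^{-s}$.

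Next I split the convolution $f_j=\sum_{\ell+s=j}g_\ell h_s$ as $f_j=h_0 g_j+\eta_j$, with $\eta_j:=\sum_{s\ge 1}g_{j-s}h_s$ collecting cross terms governed by powers of $1/\lambda$. A Cauchy product on the majorants produces $\sum_{j\ne k}|h_0 g_j|(r')^j\le|h_0|[(1+c_1 n)^k-(c_1 n)^k]$ and $\sum_{j}|\eta_j|(r')^j\le|h_0|(1+c_1 n)^k[(1+c_1 n/\lambda)^{n-k}-1]$, hence
\[
\sum_{j\ne k}|f_j|(r')^j\le|h_0|\bigl\{(1+c_1 n)^k(1+c_1 n/\lambda)^{n-k}-(c_1 n)^k\bigr\}.
\]
For the dominant term: since $g_k=1$, $f_k=h_0+\eta_k$ and $|\eta_k|\le|h_0|\sigma$ with $\sigma:=\sum_{s\ge 1}\binom{k}{s}\binom{n-k}{s}\lambda^{-s}$; using $\binom{k}{s}\binom{n-k}{s}\le n^{2s}$ together with $n^2/\lambda\le 1/(64K^2)$ (which follows from $\lambda\ge 4c_2\MAX(k)n^3$ and the two relations on $c_1,c_2$), one obtains $\sigma\le 1/(32K^2)$, so $|f_k|(r')^k\ge|h_0|(c_1 n)^k(1-\sigma)$.

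Dividing, the target ratio is at most $\bigl[(1+1/(c_1 n))^k(1+c_1 n/\lambda)^{n-k}-1\bigr]/(1-\sigma)$. The first hypothesis $c_1 n\ge\MAX(k)/\ln(1+1/(8K))$ yields $(1+1/(c_1 n))^k\le\exp(k/(c_1 n))\le 1+1/(8K)$. The second hypothesis together with $\lambda\ge 4c_2\MAX(k)n^3$ gives $(n-k)c_1 n/\lambda\le\ln(1+1/(2K))/(4n\MAX(k))\le\ln(1+1/(8K))$, where the last step uses the elementary inequality $\ln(1+1/(2K))/4\le\ln(1+1/(8K))$, itself a consequence of the monotonicity of $y\mapsto\ln(1+y)/y$ on $y>0$; hence $(1+c_1 n/\lambda)^{n-k}\le 1+1/(8K)$. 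The numerator is therefore at most $(1+1/(8K))^2-1=1/(4K)+1/(64K^2)$, and a routine check shows that, divided by $1-1/(32K^2)$, this is strictly less than $1/K$ for all $K\ge 1$. The main obstacle is the cross-term bookkeeping via the $f_j=h_0 g_j+\eta_j$ decomposition; once that is in place the two majorizing sums decouple into clean geometric-type expressions that the two constraints on $c_1$, $c_2$, and $\lambda$ in the hypothesis were precisely engineered to control.
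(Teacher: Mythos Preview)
Your proof is correct, and it takes a genuinely different route from the paper's.

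The paper splits the proof into two separate lemmas, one for each of the two sums in the form~\eqref{Tktest:splitsum}. For the upper sum $\sum_{i>k}$ it first establishes, by a direct symmetric-function contradiction argument, that $F^{(k)}$ has no zero in the disk $c_2 n^2\cdot\Delta$, and then bounds the derivative ratios $|F^{(k+i)}(m)/F^{(k)}(m)|$ via the distances to the roots of $F^{(k)}$. For the lower sum $\sum_{i<k}$ it uses the same inner/outer factorization $F=GH$ that you use, together with Leibniz' formula, to bound $|F^{(i)}(m)|$ from above and $|F^{(k)}(m)|$ from below. Each sum is then separately shown to be below $1/(2K)$.

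You instead work entirely at the level of Taylor coefficients after the factorization $F=GH$, handle both sums at once through the single decomposition $f_j=h_0 g_j+\eta_j$, and control everything with Cauchy products of the majorants $(1+c_1n)^k$ and $(1+c_1n/\lambda)^{n-k}$. This is cleaner: it never needs to locate the roots of $F^{(k)}$, treats the $i<k$ and $i>k$ parts symmetrically, and in fact never uses the second hypothesis $\lambda\ge 16K\,\MAX(k)^2 n$. What the paper's approach buys is the auxiliary statement that $F^{(k)}$ is zero-free on $c_2 n^2\cdot\Delta$ (a Pawlowski-type result the authors flag as of independent interest), which your argument bypasses entirely.
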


In our algorithm, we will only make use of Corollary~\ref{tktest cor2}, which is actually a consequence of Theorem~\ref{tktest thm} with the specific values $K:=\frac{3}{2}$, $c_1:=16$, $c_2:=64$, $\lambda=256n^5$, and thus $\frac{\MAX(k)}{\ln(1+\frac{1}{8K})}\approx 12.49\cdot\MAX(k)$ and $\ln\left(\frac{1+2K}{2K}\right)\approx 0.29$.

\begin{corollary}\label{tktest cor2}
	Let $\Delta$ be a disk in the complex space that is $(\frac{1}{16n},16n^4)$-isolating for a set of $k$ roots (counted with multiplicity) of $F$. Then, $T_k(\Delta,\frac{3}{2},F)$ holds.
\end{corollary}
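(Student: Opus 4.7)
The plan is to deduce the corollary directly from Theorem~\ref{tktest thm} by plugging in the stated parameter values and checking that the hypotheses are satisfied. The corollary's isolation radii are stated in the form $(\tfrac{1}{16n}, 16n^4)$, whereas Theorem~\ref{tktest thm} assumes $(1,\lambda)$-isolation and then concludes the $T_k$-test on $c_1 n \cdot \Delta$ (not on $\Delta$ itself). The matching of these two formulations is the main piece of bookkeeping, and everything else reduces to a few elementary numerical inequalities.

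Concretely, I would choose $K=\tfrac{3}{2}$, $c_1=16$, $c_2=64$, and $\lambda=256n^5$, and introduce the rescaled disk $\Delta'':=\tfrac{1}{16n}\cdot\Delta$. Then the hypothesis that $\Delta$ is $(\tfrac{1}{16n},16n^4)$-isolating for a set $S$ of $k$ roots translates, after dividing both radii by $\tfrac{1}{16n}$, into the statement that $\Delta''$ is $(1,\lambda)$-isolating for $S$, where $\lambda=16n^4\cdot 16n=256n^5$. Moreover $c_1 n\cdot\Delta''=16n\cdot\Delta''=\Delta$, so an application of Theorem~\ref{tktest thm} to $\Delta''$ will give exactly the conclusion $T_k(\Delta,\tfrac{3}{2},F)$.

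It remains to verify the two numerical conditions of Theorem~\ref{tktest thm}. For (\ref{def:constants}), with $K=\tfrac{3}{2}$ one computes $\ln\bigl(\tfrac{1+2K}{2K}\bigr)=\ln(4/3)\approx 0.2877$ and $\ln\bigl(1+\tfrac{1}{8K}\bigr)=\ln(13/12)\approx 0.0800$, and the chain
\[
c_2 n\cdot\ln(4/3)\approx 18.4\,n \;\ge\; 16\,n=c_1 n \;\ge\; \frac{\MAX(k)}{0.0800}\approx 12.49\cdot\MAX(k)
\]
follows because $\MAX(k)=\max(1,k)\le n$. For the lower bound on $\lambda$, the two quantities to dominate are $4c_2\cdot\MAX(k)\cdot n^3=256\cdot\MAX(k)\cdot n^3\le 256\,n^4$ and $16K\cdot\MAX(k)^2\cdot n=24\cdot\MAX(k)^2\cdot n\le 24\,n^3$, both clearly at most $256n^5=\lambda$.

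The only part that requires any care is the rescaling step: one must remember that $(\rho_1,\rho_2)$-isolation is defined relative to the ambient disk $\Delta$, so passing from $\Delta$ to $\Delta''=\tfrac{1}{16n}\cdot\Delta$ changes the isolation parameters from $(\tfrac{1}{16n},16n^4)$ to $(1,256n^5)$ and simultaneously turns $c_1 n\cdot\Delta''$ back into the original $\Delta$. Once this identification is made, the corollary is immediate from Theorem~\ref{tktest thm}.
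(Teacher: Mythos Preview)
Your proof is correct and follows exactly the approach indicated in the paper, which states that the corollary follows from Theorem~\ref{tktest thm} with the specific values $K=\tfrac{3}{2}$, $c_1=16$, $c_2=64$, $\lambda=256n^5$. You have filled in precisely the rescaling argument (passing to $\Delta''=\tfrac{1}{16n}\cdot\Delta$ so that $c_1 n\cdot\Delta''=\Delta$) and the numerical verifications that the paper leaves implicit.
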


The proof of Theorem~\ref{tktest thm} is given in the appendix.
In the proof, we separately bound the two sums in~\eqref{Tktest:splitsum}. We also derive a bound on the minimal distance between a root of the $k$-th derivative $F^{(k)}$ of $F$ and a cluster of $k$ roots of $F$. Pawlowski~\cite{pawlowski1999} provides a similar but more general bound, which implies a bound on the first sum in~\eqref{Tktest:splitsum}. However, compared to~\cite{pawlowski1999}, our proof is significantly shorter and uses only simple arguments, hence we decided to integrate it in the appendix of this paper for the sake of a self-contained presentation.


\subsection{The \texorpdfstring{$T_k^G$}{TkG}-Test: Using Graeffe Iteration}\label{sec:Graeffe}

Corollary~\ref{tktest cor2} guarantees success of the $T_k(\Delta, 3/2, F)$-test, with $k=|\mathcal{Z}(\Delta)|$, if the disk $\Delta$ is $(\frac{1}{16n},16n^4)$-isolating for a set of $k$ roots. In this section, we use a well-known approach for squaring the roots of a polynomial, called Graeffe iteration~\cite{Graeffe49}, in order to improve upon the $T_k$-test. More specifically, we derive a variant of the $T_k$-test, which we denote $T_k^G$-test\footnote{The superscript ``$G$'' indicates the use of Graeffe iteration.}, that allows us to exactly count the roots contained in some disk $\Delta$ if $\Delta$ is $(\rho_1,\rho_2)$-isolating for a set of $k$ roots, with constants $\rho_1$ and $\rho_2$ of size $\rho_1\approx 0.947$ and $\rho_2=\frac{4}{3}$.

\begin{definition}[Graeffe Iteration]
	For a polynomial $F(x)=\sum_{i=0}^n a_{i}x^i \in\mathbb{C}[x]$, write $F(x)= F_e(x^2) + x\cdot F_o(x^2)$, with
	\begin{align*}
		  & F_e(x):=
		a_{2 \lfloor \frac{n}{2}\rfloor}x^{\lfloor \frac{n}{2} \rfloor }
		+ a_{2\lfloor \frac{n}{2}\rfloor -2} x^{\lfloor \frac{n}{2} \rfloor -1}
		+ \ldots
		+ a_{2} x
		+ a_0, \quad \text{ and }\\
		  & F_o(x):=
		a_{2 \lfloor \frac{n-1}{2}\rfloor +1}x^{\lfloor \frac{n-1}{2} \rfloor }
		+ a_{2 \lfloor \frac{n-1}{2}\rfloor -1} x^{\lfloor \frac{n-1}{2} \rfloor -1}
		+ \ldots
		+ a_{3} x
		+ a_{1}.
	\end{align*}
	Then, the \emph{first Graeffe iterate $F^{[1]}$ of $F$} is defined as:
	\[
		F^{[1]}(x):=(-1)^n [F_e(x)^2 - x\cdot F_o(x)^2].
	\]
\end{definition}

The first part of the following theorem is well-known (e.g. see~\cite{Graeffe49}), and we give its proof only for the sake of a self-contained presentation. For the second part, we have not been able to find a corresponding result in the literature. Despite the fact that we consider the result to be of independent interest, we will need it in the analysis of our approach.
\begin{theorem}
	\label{graeffe thm}
	Denote the roots of $F$ by $z_1,\ldots,z_n$, then it holds that $F^{[1]}(x)=\sum_{i=0}^n a_i^{[1]}x^i = a_n^2\cdot \prod_{i=1}^n(x-z_i^2)$. In particular, the roots of the first Graeffe iterate $F^{[1]}$ are the squares of the roots of $F$. In addition, we have
	\[
		n^2\cdot \MAX(\|F\|_\infty)^2\ge\|F^{[1]}\|_\infty\ge \|F\|_\infty^2 \cdot 2^{-4n}.
	\]
\end{theorem}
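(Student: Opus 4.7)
My plan is to split the statement into three sub-claims -- the factorization identity, the easy upper bound on $\|F^{[1]}\|_\infty$, and the delicate lower bound -- and attack them in increasing order of difficulty.

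For the factorization $F^{[1]}(x)=a_n^2\prod_i(x-z_i^2)$, the classical trick is to compute $F(x)\cdot F(-x)$ in two ways and match. Writing $F(x)=F_e(x^2)+xF_o(x^2)$ gives $F(-x)=F_e(x^2)-xF_o(x^2)$, so $F(x)F(-x)=F_e(x^2)^2-x^2F_o(x^2)^2$, which is visibly a polynomial in $x^2$; on the other hand, expanding via $F(x)=a_n\prod_i(x-z_i)$ yields $F(x)F(-x)=(-1)^n a_n^2\prod_i(x^2-z_i^2)$. Substituting $y=x^2$ and multiplying by $(-1)^n$ matches the two expressions and produces exactly $F^{[1]}(y)=a_n^2\prod_i(y-z_i^2)$. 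As a by-product I record $a_n^{[1]}=a_n^2$, which I will need for the lower bound.

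The upper bound is a straightforward convolution count. Each coefficient $[x^i]F_e(x)^2$ is a sum of at most $\lfloor n/2\rfloor+1$ products of the form $a_{2j}a_{2(i-j)}$, and analogously for $[x^i]F_o(x)^2$. Since $F^{[1]}(x)=\pm(F_e(x)^2-xF_o(x)^2)$, every coefficient of $F^{[1]}$ has modulus at most $(n+2)\|F\|_\infty^2$. For $n\ge 2$ this is bounded by $n^2\|F\|_\infty^2\le n^2\MAX(\|F\|_\infty)^2$ when $\|F\|_\infty\ge 1$; when $\|F\|_\infty<1$, I instead use $\MAX(\|F\|_\infty)^2=1$ together with $n+2\le n^2$ to absorb the excess.

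The lower bound $\|F^{[1]}\|_\infty\ge 2^{-4n}\|F\|_\infty^2$ is the genuine obstacle: a direct convolution argument looks hopeless because of the potential massive cancellation between $F_e(x)^2$ and $xF_o(x)^2$. My plan is to route the estimate through the Mahler measure $\Mea_F$. Combining part one with $\max(1,|z_i^2|)=\MAX(z_i)^2$ and $a_n^{[1]}=a_n^2$ immediately gives the key identity $\Mea_{F^{[1]}}=\Mea_F^2$. Two standard Landau--Mignotte type inequalities then close the loop: on one side $\|F\|_\infty\le\binom{n}{\lfloor n/2\rfloor}\Mea_F\le 2^n\Mea_F$, so $\Mea_F\ge\|F\|_\infty\cdot 2^{-n}$; on the other, applying the classical bounds $\Mea_G\le\|G\|_2\le\sqrt{n+1}\,\|G\|_\infty$ to $G=F^{[1]}$ yields $\|F^{[1]}\|_\infty\ge\Mea_{F^{[1]}}/\sqrt{n+1}$. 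Chaining these gives $\|F^{[1]}\|_\infty\ge\Mea_F^2/\sqrt{n+1}\ge \|F\|_\infty^2\cdot 2^{-2n}/\sqrt{n+1}\ge \|F\|_\infty^2\cdot 2^{-4n}$, using the crude estimate $\sqrt{n+1}\le 2^{2n}$ to finish with the clean exponent stated in the theorem.
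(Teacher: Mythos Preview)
Your proof is correct, and for two of the three parts it is cleaner than the paper's own argument.

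For the factorization, the paper verifies $F^{[1]}(z_i^2)=0$ directly and then invokes a perturbation argument to get the multiplicities right. Your $F(x)F(-x)$ computation gives the full factorization $F^{[1]}(y)=a_n^2\prod_i(y-z_i^2)$ as an identity of polynomials in one stroke, so no separate multiplicity discussion is needed. For the upper bound the two arguments are essentially the same convolution count; your case split on whether $\|F\|_\infty\ge 1$ is harmless but unnecessary, since $(n+2)\|F\|_\infty^2\le n^2\|F\|_\infty^2\le n^2\MAX(\|F\|_\infty)^2$ holds uniformly for $n\ge 2$.

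The real difference is in the lower bound. The paper argues pointwise: it uses the maximum principle plus Cauchy's integral formula to get $\max_{|z|=1}|F(z)|\ge\|F\|_\infty$, splits the roots according to whether $|z_i|<2$, locates a point $z'$ on the unit circle where the ``small-root'' part of $F^{[1]}$ is at least $1$, and then compares the remaining factors $|z'-z_i^2|$ to $|z_{\max}-z_i|^2$ root by root. Your route via the identity $\Mea_{F^{[1]}}=\Mea_F^2$ together with the standard two-sided Landau--Mignotte estimates $\|F\|_\infty\le 2^n\Mea_F$ and $\Mea_{F^{[1]}}\le\sqrt{n+1}\,\|F^{[1]}\|_\infty$ is shorter and more conceptual, and in fact yields the slightly sharper constant $2^{-2n}/\sqrt{n+1}$ before you relax it to $2^{-4n}$. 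The paper's argument, on the other hand, has the advantage of establishing the intermediate inequality $\|F\|_\infty\le\max_{|z|\le 1}|F(z)|\le(n+1)\|F\|_\infty$, which it reuses elsewhere (e.g.\ in Lemma~\ref{allkcost}).
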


\begin{proof}
	See Appendix~\ref{appendix:graeffe}.
\end{proof}

\begin{algorithm}[t]\label{algo:graeffe}
	\DontPrintSemicolon
	\SetKwData{Left}{left}\SetKwData{This}{this}\SetKwData{Up}{up}
	\SetKwFunction{Union}{Union}\SetKwFunction{FindCompress}{FindCompress}
	\SetKwInOut{Input}{Input}\SetKwInOut{Output}{Output}
	\Input{Polynomial $F(x)=\sum_{i=0}^n a_i x^i$, and a non-negative integer $N$.}
	\Output{Polynomial $F^{[N]}(x)=\sum_{i=0}^n a^{[N]}_i x^i$. If $F$ has roots $z_1,\ldots,z_n$, then $F^{[N]}$ has roots $z_1^{2^N},\ldots,z_n^{2^N}$, and $a_n^{[N]}=a_n^{2^N}$}
	\BlankLine
	$F^{[0]}(x):=F(x)$\;
	\For{$i=1,\ldots , N$}{
		$F^{[i]}(x):=(-1)^n [F^{[i-1]}_e(x)^2 - x\cdot F^{[i-1]}_o(x)^2]$
	}
	\Return{$F^{[N]}(x)$}
	\caption{Graeffe Iteration}
\end{algorithm}

We can now iteratively apply Graeffe iterations in order to square the roots of a polynomial $F(x)$ several times.
In this way, we can now reduce the ``separation factor of the $T_k$-Test'' from polynomial in $n$ (namely, $256n^5$) to a constant value (in our case, this constant will be $\approx 1.41$) when we run $N$, with $N=\Theta(\log \log n)$, Graeffe iterations first, and then apply the $T_k$-test; see Algorithm~\ref{algo:graeffe2}.
From Theorem~\ref{tktest thm} and Theorem~\ref{graeffe thm}, we then obtain the following result:

\begin{lemma}\label{softtest:success}
	Let $\Delta$
	be a disk in the complex plane and $F(x)\in\mathbb{C}[x]$ a polynomial of degree $n$. Let
	\begin{align}
		N:=\lceil\log(1+\log n)\rceil+5
	\end{align}
	and
	\begin{align}
		\rho_1:=\frac{2\sqrt{2}}{3}\approx 0.943\quad\text{and}\quad\rho_2:=\frac{4}{3}
	\end{align}
	Then, we have $\sqrt[2^N]{\frac{1}{16n}}>\rho_1$, and it holds:
	\begin{itemize}
		\item[(a)] If $\Delta$ is $(\rho_1,\rho_2)$-isolating for a set of $k$ roots of $F$, then $T_k^G (\Delta,\frac{3}{2})$ succeeds.
		\item[(b)] If $T_k^G  (\Delta,K)$ succeeds for some $K\ge 1$, then $\Delta$ contains exactly $k$ roots.
	\end{itemize}
\end{lemma}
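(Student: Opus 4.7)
The plan is to unfold the $T_k^G$-test as ``apply $N$ Graeffe iterations to $F_\Delta(x) := F(m+r\cdot x)$, then run the ordinary $T_k(\Delta(0,1), K, F_\Delta^{[N]})$-test on the unit disk'', and then reduce part~(a) to Corollary~\ref{tktest cor2} and part~(b) to Theorem~\ref{thm:pellet}, in both cases applied to the Graeffe-iterated polynomial on the unit disk. The whole argument is then a matter of tracking how $(\rho_1,\rho_2)$-isolation transforms under (i)~the affine rescaling $z\mapsto (z-m)/r$ and (ii)~raising every root to the $2^N$-th power.

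I would first dispose of the preliminary inequality $\sqrt[2^N]{1/(16n)} > \rho_1$. Since $\rho_1 = 2\sqrt{2}/3$, it is equivalent to $(4+\log n)/2^N < -\log \rho_1 = \log 3 - 3/2 \approx 0.0849$. From $N = \lceil \log(1+\log n)\rceil + 5$ I get $2^N \ge 32(1+\log n)$, so the left hand side is bounded by $(4+\log n)/(32(1+\log n))$. This function is monotonically decreasing in $\log n$, and the hypothesis $n\ge 2$ (from~\eqref{def:polyF}) forces $\log n \ge 1$, so its value is at most $5/64 \approx 0.078$, below the threshold.

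For part~(a), the affine map $z\mapsto (z-m)/r$ carries the $(\rho_1,\rho_2)$-isolation of $\Delta$ for $k$ roots of $F$ to the same type of isolation of the unit disk $\Delta(0,1)$ for $k$ roots of $F_\Delta$. By Theorem~\ref{graeffe thm}, the roots of $F_\Delta^{[N]}$ are the $2^N$-th powers of the roots of $F_\Delta$, and squaring commutes with comparison of absolute values, so the unit disk is now $(\rho_1^{2^N}, \rho_2^{2^N})$-isolating for the corresponding $k$ roots of $F_\Delta^{[N]}$. The inequality already shown yields $\rho_1^{2^N} < 1/(16n)$, and a parallel (easier) estimate, using $\log(4/3) > 1/3$ together with $2^N \ge 32(1+\log n)$, gives $\rho_2^{2^N} = (4/3)^{2^N} \ge 16n^4$. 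Hence $\Delta(0,1)$ is $(\tfrac{1}{16n}, 16n^4)$-isolating for $k$ roots of $F_\Delta^{[N]}$, and Corollary~\ref{tktest cor2} applied to $F_\Delta^{[N]}$ delivers $T_k(\Delta(0,1), 3/2, F_\Delta^{[N]})$, which is by definition $T_k^G(\Delta, 3/2, F)$.

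For part~(b), success of $T_k^G(\Delta, K, F)$ is literally success of $T_k(\Delta(0,1), K, F_\Delta^{[N]})$, so Theorem~\ref{thm:pellet} says the unit disk contains exactly $k$ roots (counted with multiplicity) of $F_\Delta^{[N]}$. Theorem~\ref{graeffe thm} again identifies these roots as the $2^N$-th powers of the roots of $F_\Delta$; since $|w^{2^N}|<1$ iff $|w|<1$, the number of roots of $F_\Delta$ inside the unit disk equals $k$, and undoing the affine change gives exactly $k$ roots of $F$ in $\Delta$. The only non-routine point in the whole argument is checking that the Graeffe exponent $N=\Theta(\log\log n)$ is chosen just large enough to inflate the initial isolation ratios $(\rho_1,\rho_2)$ past the thresholds $(\tfrac{1}{16n}, 16n^4)$ required by Corollary~\ref{tktest cor2}, which is precisely the content of the two numerical estimates above.
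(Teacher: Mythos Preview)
Your proof is correct and follows essentially the same route as the paper: reduce to the unit disk via the affine change, track how Graeffe iteration raises the isolation ratios to the $2^N$-th power, then invoke Corollary~\ref{tktest cor2} for~(a) and Theorem~\ref{thm:pellet} for~(b). The only cosmetic difference is that you verify the threshold inequalities $\rho_1^{2^N}<\tfrac{1}{16n}$ and $\rho_2^{2^N}\ge 16n^4$ by explicit logarithmic estimates, whereas the paper argues the first via monotonicity of $\rho(n)$ plus a single evaluation at $n=2$ and simply asserts the second.
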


\begin{proof}
	The lower bound on $\rho(n):=\sqrt[2^N]{\frac{1}{16n}}$ follows by a straight forward computation that shows that $\rho(n)$, considered as a function in $n$, is strictly increasing and that $\rho(2)\approx 0.947> \frac{2\sqrt{2}}{3}\approx 0.943$. Now, let
	$F_{\Delta}^{[N]}$ be the polynomial obtained from $F_{\Delta}$ after performing $N$ recursive Graeffe iterations.
	If $\Delta$ is $(\rho_1,\rho_2)$-isolating for a set of $k$ roots of $F$, then the unit disk $\Delta':=\Delta(0,1)$ is also $(\rho_1,\rho_2)$-isolating for a set of $k$ roots of $F_{\Delta}$, that is, $\Delta'$ contains $k$
	roots of $F_{\Delta}$ and all other roots of $F_{\Delta}$ have absolute value
	larger than $\frac{4}{3}$. Hence, we conclude that $F_{\Delta}^{[N]}$ has $k$ roots of absolute value less than $\rho_1^{2^N}<\frac{1}{16n}$, whereas the remaining roots have absolute value larger than
	$\rho_2^{2^N}\ge 16n^4$. From Corollary~\ref{tktest cor2}, we thus
	conclude that $T_k(\Delta',\frac{3}{2},F_{\Delta}^{[N]})$ succeeds.
	This shows (a).
	Part (b) is an immediate consequence of Theorem~\ref{thm:pellet} and
	the fact that Graeffe iteration does not change the number of roots contained in the unit disk.
\end{proof}

In the special case where $k=0$, the failure of $T_0^G (\Delta)$ already implies that $\frac{4}{3}\cdot\Delta$ contains at least one root.\\

\begin{algorithm}[t]\label{algo:graeffe2}
	\DontPrintSemicolon
	\SetKwData{Left}{left}\SetKwData{This}{this}\SetKwData{Up}{up}
	\SetKwFunction{Union}{Union}\SetKwFunction{FindCompress}{FindCompress}
	\SetKwInOut{Input}{Input}\SetKwInOut{Output}{Output}
	\Input{Polynomial $F(x)$ of degree $n$, disk $\Delta=\Delta(m,r)$, real value $K$ with $1\le K\le \frac{3}{2}$}
	\Output{True or False. If the algorithm returns True, $\Delta$ contains exactly $k$ roots of $F$.}
	\BlankLine
	Call Algorithm~\ref{algo:graeffe} with input
	$F_{\Delta}(x):=F(m+r\cdot x)$
	and $N:=\lceil\log(1 +\log n)\rceil+5$, which returns $F_{\Delta}^{[N]}$\;
	\Return{$T_k(0,1,K,F^{[N]}(x))$}
	\caption{$T_k^G(\Delta,K)$-Test}
\end{algorithm}

The following result is a direct consequence of Theorem~\ref{graeffe thm}. We will later use it in the analysis of our algorithm:

\begin{corollary}
	Let $F_{\Delta}$ and $F^{[N]}_{\Delta}$ be defined as in Algorithm~\ref{algo:graeffe2}. Then, it holds:
	$$\LOG(\|F^{[N]}_{\Delta}(x)\|_\infty,\|F^{[N]}_{\Delta}(x)\|_\infty^{-1})=O(\log n\cdot(n+\LOG(\|F_{\Delta}\|_\infty,\|F_{\Delta}\|_\infty^{-1})).$$
\end{corollary}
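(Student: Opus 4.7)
The plan is to apply Theorem~\ref{graeffe thm} iteratively across all $N$ Graeffe steps and then use $N = O(\log\log n)$ to collapse the exponential blow-up into a factor of $O(\log n)$.

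Let me write $M_i := \|F_\Delta^{[i]}\|_\infty$ and $L_i := \log M_i$ for $i = 0, 1, \ldots, N$. Theorem~\ref{graeffe thm} applied to $F_\Delta^{[i-1]}$ yields the two-sided bound $M_{i-1}^2 \cdot 2^{-4n} \le M_i \le n^2 \cdot \MAX(M_{i-1})^2$. Taking logarithms, this becomes
\[
    2 L_{i-1} - 4n \;\le\; L_i \;\le\; 2\log n + 2 \max(0, L_{i-1}).
\]
The upper inequality immediately gives $\max(0, L_i) \le 2 \log n + 2 \max(0, L_{i-1})$. For the lower inequality, I would split into the cases $L_{i-1} \ge 0$ and $L_{i-1} < 0$ to verify that in both situations $\max(0, -L_i) \le 4n + 2 \max(0, -L_{i-1})$.

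Next I would unroll both recurrences over the $N$ steps. This produces
\[
    \max(0, L_N) \le 2^N \max(0, L_0) + (2^N - 1) \cdot 2\log n,
\]
\[
    \max(0, -L_N) \le 2^N \max(0, -L_0) + (2^N - 1) \cdot 4n.
\]
Adding these and bounding $|L_N| \le \max(0, L_N) + \max(0, -L_N)$ gives $|L_N| = O(2^N \cdot (|L_0| + n))$. Since $N = \lceil \log(1+\log n)\rceil + 5$, we have $2^N \le 64(1+\log n) = O(\log n)$. Combining with the trivial inequality $\LOG(M_N, M_N^{-1}) = \max(1, |L_N|) \le 1 + |L_N|$ and $|L_0| \le \LOG(\|F_\Delta\|_\infty, \|F_\Delta\|_\infty^{-1})$ then yields the claim.

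I expect no real obstacle; the only minor care is the case analysis for the lower bound on $L_i$ (where the sign of $L_{i-1}$ matters because the upper-bound side of Theorem~\ref{graeffe thm} involves $\MAX$ rather than $|\cdot|$), but both cases give the same recursion up to the additive $4n$ term. Everything else is a routine geometric-series unrolling, and the key structural observation is simply that $N = O(\log\log n)$ makes $2^N = O(\log n)$, which is exactly the factor appearing in the statement.
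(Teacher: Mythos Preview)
Your proposal is correct and follows essentially the same approach as the paper: iterate the two-sided bound of Theorem~\ref{graeffe thm}, so that $\LOG(\|F_{\Delta}^{[i]}\|_\infty,\|F_{\Delta}^{[i]}\|_\infty^{-1})$ at each step at most doubles plus an additive $O(n)$ term, and then use $2^N=O(\log n)$ to conclude. The paper states this recurrence and its consequence in one line (see the proof of Lemma~\ref{allkcost}); your explicit separation into $\max(0,L_i)$ and $\max(0,-L_i)$ with the sign case-analysis is a clean way to make that one line rigorous.
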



\subsection{The \texorpdfstring{$\tilde T_k^G$}{tTkG}-Test: Using Approximate Arithmetic}\label{sec:graeffeapx}

So far, the $T_k$-test is formulated in a way such that, in general, high-precision
arithmetic, or even exact arithmetic, is needed in order to compute its output. Namely, if the two expressions on both sides of (\ref{Tktest}) are actually equal, then exact arithmetic is needed to decide
equality. Notice that, in general, we cannot even handle this case as we have only access to (arbitrary good) approximations of the coefficients of the input polynomial $F$.
But even if the two expression are different but almost equal, then we need to evaluate the
polynomial $F$ and its higher order derivatives with a very high precision in order to decide the
inequality, which induces high computational costs. This is a typical problem that appears in many algorithms, where a sign predicate $\mathcal{P}$ is
used to draw conclusions, which in turn decide a branch of the algorithm. Suppose that, similar as for the $T_k$-test (with $E_{\ell}=\frac{|F^{(k)}(m)|\cdot r^k }{k!} $ and $E_r=\sum_{i\neq k} \frac{|F^{(i)}(m) |\cdot r^i }{i!}$), there exist
two non-negative expressions $E_{\ell}$ and $E_r$ such that $\cal P$ succeeds\footnote{We assume that the predicate $\cal P$ either returns ``True'' or ``False''. We say that $\cal P$ succeeds if it returns True. Otherwise, we say that it fails.} if and only
if $E_{\ell}-E_r$ has a positive sign (or, equivalently, if $E_{\ell}>E_r$). We further denote by
$\mathcal{P}_{\frac{3}{2}}$ the predicate that succeeds if and only if the stronger inequality
$E_{\ell}-\frac{3}{2}\cdot E_r>0$ holds.\footnote{You may replace $\frac{3}{2}$ by an arbitrary real constant $K$ larger than $1$.}
Then, success of $\mathcal{P}_{\frac{3}{2}}$ implies success of $\cal P$; however, a failure of
$\mathcal{P}_{\frac{3}{2}}$ does, in general, not imply that $\mathcal{P}$ fails as well.
As already mentioned above for the special case, where $\mathcal{P}=T_k(m,r,1,F)$, it might be computationally expensive (or even infeasible) to determine the outcome of
$\mathcal{P}$, namely in the case where the two expressions $E_{\ell}$ and $E_r$ are equal or almost equal. In
order to avoid such undesirable situations, we propose to replace the predicate $\cal P$ by a
corresponding so-called \emph{soft-predicate}~\cite{DBLP:conf/cie/YapS013}, which we denote by
$\tilde{\cal P}$. $\tilde{\mathcal{P}}$ does not only return True or False, but may also return a flag called ``Undecided''. If it returns True or False, the result of $\tilde{\mathcal{P}}$ coincides with that of $\mathcal{P}$. However, if $\tilde{\mathcal{P}}$ returns Undecided, we may only conclude that $E_{\ell}$ is a relative $\frac{3}{2}$-approximation of $E_r$ (i.e.,
$\frac{2}{3}\cdot E_{\ell}<E_r<\frac{3}{2}\cdot E_{\ell}$).
We briefly sketch our approach and give details in Algorithm~\ref{algo:softpredicate}: In the first step, we compute approximations $\tilde E_{\ell}$ and $\tilde E_r$ of the values $E_{\ell}$ and $E_r$, respectively. Then, we check
whether we can already compare the exact values $E_{\ell}$ and $E_r$ by just considering their approximations and taking into account the quality of approximation. If this is the
case, we are done as we can already determine the outcome of $\mathcal{P}$. Hence, we define that $\tilde{\mathcal{P}}$ returns True (False) if we can show
that $E_{\ell}>E_r$ ($E_{\ell}<E_r$). Otherwise, we iteratively increase the quality of approximation until we can either
show that $E_{\ell}>E_r$, $E_{\ell}<E_r$, or $\frac{2}{3}\cdot E_{\ell}\le E_r\le\frac{3}{2}\cdot E_{\ell}$. We may consider the latter case as an indicator that comparing $E_{\ell}$ and $E_r$ is difficult, and thus $\tilde{\cal P}$ returns Undecided in this case.

It is easy to see that Algorithm~\ref{algo:softpredicate} terminates if and only if at least one of the two expressions $E_{\ell}$ and $E_r$ is non-zero, hence we make this a requirement.
In the following lemma, we further give a bound on the precision to which the expressions $E_{\ell}$ and $E_r$ have to be approximated in order to guarantee termination of the algorithm.

\begin{algorithm}[t]\label{algo:softpredicate}
	\DontPrintSemicolon
	\SetKwData{Left}{left}\SetKwData{This}{this}\SetKwData{Up}{up}
	\SetKwFunction{Union}{Union}\SetKwFunction{FindCompress}{FindCompress}
	\SetKwInOut{Input}{Input}\SetKwInOut{Output}{Output}
	\Input{A predicate $\mathcal{P}$ defined by non-negative expressions $E_{\ell}$ and $E_r$, with $E_{\ell}\neq 0$ or $E_r\neq 0$; i.e. $\mathcal{P}$ succeeds if and only if $E_{\ell}>E_r$.}
	\Output{True, False, or Undecided. In case of True (False), $\mathcal{P}$ succeeds (fails). In case of Undecided, we have $\frac{2}{3}\cdot E_{\ell}<E_r\le\frac{3}{2}\cdot E_{\ell}$.
	}
	\BlankLine
	$L:=1$\;
	\While{True}{
		Compute $L$-bit approximations $\tilde E_{\ell}$ and $\tilde E_r$ of the expressions $E_{\ell}$ and $E_r$, respectively.\;
		$E_{\ell}^{\pm}:=\max(0,\tilde E_{\ell}\pm 2^{-L})$ and $E_{r}^{\pm}:=\max(0,\tilde E_{r}\pm 2^{-L})$\;
		\If{$E_{\ell}^->E_r^+$}{\Return True}
		\atcp{\normalfont{\textit{It follows that $E_{\ell}>E_r$.}}}
		\If{$E_{\ell}^+<E_r^-$}{\Return False}
		\atcp{\normalfont{\textit{It follows that $E_{\ell}<E_r$.}}}
		\If{$\frac{2}{3}\cdot E_\ell^+\le E_r^-< E_r^+\le\frac{3}{2}\cdot E_{\ell}^-$,}{\Return Undecided}
		\atcp{\normalfont{\textit{It follows that $\frac{2}{3}\cdot E_\ell\le E_r\le\frac{3}{2}\cdot E_{\ell}$.}}}
		$L:=2\cdot L$\;
	}
	\caption{Soft-predicate $\tilde{\mathcal{P}}$}
\end{algorithm}

\begin{lemma}\label{precision:compare}
	Algorithm~\ref{algo:softpredicate} terminates for an $L$ that is upper bounded by $$L_0:=2\cdot(\LOG(\max(E_{\ell},E_r)^{-1})+4).$$
\end{lemma}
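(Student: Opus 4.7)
The plan is to identify a threshold $\delta^{\star}$ on $\delta := 2^{-L}$ such that, once $\delta \le \delta^{\star}$, at least one of the three return conditions (\textbf{True}, \textbf{False}, \textbf{Undecided}) is forced to fire. Setting $M := \max(E_\ell, E_r) > 0$, I will show that one can take $\delta^{\star} := M/22$; the announced bound then follows from the fact that Algorithm~\ref{algo:softpredicate} only probes $L$ on the doubling sequence $1, 2, 4, \ldots$

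Setting the ground, the $L$-bit approximations satisfy $|\tilde E_\ell - E_\ell|, |\tilde E_r - E_r| < \delta$, and by definition of $E^{\pm}$ this gives the envelopes $E_\ell \le E_\ell^+ \le E_\ell + 2\delta$, $\max(0, E_\ell - 2\delta) \le E_\ell^- \le E_\ell$, and analogously for $E_r$. Substituting these into the three return conditions yields concrete sufficient conditions in $E_\ell$, $E_r$, $\delta$: \textbf{True} ($E_\ell^- > E_r^+$) is implied by $4\delta < E_\ell - E_r$; the tight part $\tfrac{2}{3} E_\ell^+ \le E_r^-$ of \textbf{Undecided} is implied by $10\delta \le 3E_r - 2E_\ell$; and the remaining \textbf{Undecided} inequality $E_r^+ \le \tfrac{3}{2} E_\ell^-$ is implied by $10\delta \le 3E_\ell - 2E_r$, whose right-hand side is at least $M$ whenever $E_\ell \ge E_r$, so it is automatic for $\delta \le M/10$.

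Assuming WLOG $E_\ell \ge E_r$ (the \textbf{Undecided} condition is symmetric in $E_\ell, E_r$), the argument reduces to a one-parameter analysis in $t := E_r/M \in [0, 1]$. For $t \le 2/3$ one has $E_\ell - E_r \ge M/3$, so \textbf{True} fires as soon as $\delta < M/12$. For $t \in (2/3, 1]$ it suffices to have \emph{either} $\delta < (1-t)M/4$ (\textbf{True}) \emph{or} $\delta \le (3t-2)M/10$ (\textbf{Undecided}); a one-variable optimization shows $\max\!\bigl(\tfrac{1-t}{4}, \tfrac{3t-2}{10}\bigr) \ge \tfrac{1}{22}$ on this interval, with equality exactly at $t = 9/11$. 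Combining the two subcases, $\delta < M/22$ always forces termination.

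Finally, the smallest integer $L$ satisfying $2^{-L} < M/22$ is $L^{\star} := \lceil \log(22/M) \rceil$, and because $\log 22 < 5$ one obtains $L^{\star} \le 5 + \LOG(M^{-1})$. As the algorithm only inspects $L$ on the doubling sequence, it halts at $L = 2^{\lceil \log L^{\star} \rceil}$; an elementary check shows that for $L^{\star} \ge 2$ this gives $L \le 2L^{\star} - 2 \le 8 + 2\LOG(M^{-1}) = L_0$, while the corner case $L^{\star} \le 1$ is immediate since $L_0 \ge 10$. The only non-routine step is the one-variable optimization yielding the constant $\tfrac{1}{22}$, which is what ultimately pins down the constant $4$ inside $L_0$; everything else is direct substitution of the approximation envelopes.
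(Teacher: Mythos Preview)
Your proof is correct and follows essentially the same strategy as the paper: show that once $2^{-L}$ drops below a constant fraction of $M=\max(E_\ell,E_r)$, one of the three return conditions must fire, and then use the doubling of $L$ to obtain the factor $2$ in $L_0$. The only stylistic difference is how the threshold is derived: the paper fixes $L\ge \LOG(M^{-1})+4$ (i.e.\ $2^{-L}\le M/16$) and argues by a direct case split---first verifying $E_r^+\le \tfrac{3}{2}E_\ell^-$ outright, then showing that failure of $\tfrac{2}{3}E_\ell^+\le E_r^-$ forces $E_\ell^->E_r^+$---whereas you parametrize by $t=E_r/M$ and solve the one-variable $\max$--$\min$ problem, landing on the slightly looser constant $M/22$; both constants fit comfortably under the stated $L_0$.
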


\begin{proof}
	Suppose that $L\ge \LOG(\max(E_{\ell},E_r)^{-1})+4$. We further assume that $E_{\ell}=\max(E_{\ell},E_r)$; the case $E_r=\max(E_{\ell},E_r)$ is then treated in analogous manner.
	It follows that $$E_{r}^+\le E_{r} +2^{-L+1}\le E_{\ell} +2^{-L+1}\le \frac{9}{8}\cdot E_{\ell}\le \frac{3}{2}\cdot E_{\ell}-2^{-L+2}\le \frac{3}{2}\cdot E_{\ell}^-.$$
	Hence, if, in addition, $\frac{2}{3}\cdot E_{\ell}^+\le E_r^-$, then the algorithm returns Undecided in Step 10. Otherwise, we have
	$\frac{9}{8}\cdot E_{\ell}\ge E_{\ell}+2^{-L+1}\ge E_{\ell}^+>\frac{3}{2}\cdot E_{r}^-$,
	and thus
	$$E_{\ell}^-\ge E_{\ell}-2^{-L+1}\ge \frac{7}{8}\cdot E_{\ell}\ge \frac{3}{4}\cdot E_{\ell}+2^{-L+1}\ge E_{r}^-+2^{-L+1}\ge E_r^+,$$
	which shows that the algorithm returns True in Step 6.
	Since we double $L$ in each iteration, it follows that the algorithm must terminate for an $L$ with $L< 2\cdot(\LOG(\max(E_{\ell},E_r)^{-1})+4)$.
\end{proof}

Notice that if $\tilde{\cal P}$ returns True, then $\cal P$ also succeeds. This however does not hold in the opposite direction. In addition,
if $\mathcal{P}_{\frac{3}{2}}$ succeeds, then $E_{\ell}>E_r$ and $E_{\ell}$ cannot be a relative $\frac{3}{2}$-approximation of $E_r$, hence $\tilde{\cal P}$ must return True. We conclude that our soft-predicate is somehow located ``in between'' the two predicates $\cal P$ and $\mathcal{P}_{\frac{3}{2}}$.\\

We now return to the special case, where $\mathcal{P}=T_k(m,r,1,F)$, with
$E_{\ell}=\frac{|F^{(k)}(m)|\cdot r^k }{k!} $ and $E_r=\sum_{i\neq k} \frac{|F^{(i)}(m) |\cdot r^i }{i!}$ the two expressions on the left and the right side of (\ref{Tktest}),
respectively. Then, success of $\mathcal{P}$ implies that the disk $\Delta=\Delta(m,r)$
contains exactly $k$ roots of $F$, whereas a failure of $\mathcal{P}$ yields no further
information. Now, let us consider the corresponding soft predicate $\tilde{\mathcal{P}}=\tilde T_k(\Delta,F)$ of $\mathcal{P}=T_k(\Delta,F)$. If $\tilde{\mathcal{P}}$
returns True, then this implies success of $\mathcal{P}$. In addition, notice that
success of $T_k(\Delta,\frac{3}{2},F)$ implies that $\tilde{\mathcal{P}}$ returns True,
and thus we may replace $T_k(\Delta,\frac{3}{2},F)$ by $\tilde T_k(\Delta,F)$ in the
second part of Theorem~\ref{tktest thm}. Similarly, in Lemma~\ref{softtest:success}, we may also replace $T_k^G(\Delta,\frac{3}{2},F)$ by the soft-version $\tilde T_k^G(\Delta,F)$ of $T_k^G(\Delta,F)$. We give more
details for the computation of $\tilde T_k(\Delta,F)$ and $\tilde T_k^G(\Delta,F)$
in Algorithms~\ref{algo:softTk} and~\ref{algo:softgraeffe2}, which are essentially applications of Algorithm~\ref{algo:softpredicate} to the predicates $T_k(\Delta,F)$ and $T_k^G(\Delta,F)$. The lemma below summarizes our results. Based on Lemma~\ref{precision:compare}, we also provide a bound on the precision $L$ for which Algorithm~\ref{algo:softTk} terminates and a bound for the bit complexity of Algorithm~\ref{algo:softTk}. A corresponding bound for the bit complexity of carrying out the $\tilde T_k^G(\Delta,F)$-test for all $k=0,\ldots,n$ is given in Lemma~\ref{tildeTcost}.

\begin{algorithm}[t]\label{algo:softTk}
	\DontPrintSemicolon
	\SetKwData{Left}{left}\SetKwData{This}{this}\SetKwData{Up}{up}
	\SetKwFunction{Union}{Union}\SetKwFunction{FindCompress}{FindCompress}
	\SetKwInOut{Input}{Input}\SetKwInOut{Output}{Output}
	\Input{A polynomial $F(x)$ of degree $n$, a disk $\Delta:=\Delta(m,r)$ in the complex plane, and an integer $k$ with $0\le k\le n$.}
	\Output{True, False, Undecided. If the algorithm returns True, the disk $\Delta(m,r)$ contains exactly $k$ roots of $F$.}
	\BlankLine
	$L:=1$\;
	\While{True}{
		Compute an approximation $\tilde F_{\Delta}(x)=\sum_{i=0}^n \tilde{f}_i x^i$ of the polynomial $F_{\Delta}(x):=\sum_{i=0}^n f_i\cdot x^i:= F(m+r\cdot x)$ such that $\tilde{f}_i\cdot 2^{L+\lceil\log (n+1)\rceil}\in\mathbb{Z}$ and $|f_i-\tilde{f}_i|<2^{-L+\lceil\log (n+1)\rceil}$ for all $i$.

		\atcp{\normalfont{\textit{$(L+\lceil\log (n+1)\rceil)$-bit approximation of $F_{\Delta}$.}}}
		$f_i^-:=\max(0,|\tilde{f}_i|-2^{-L-\lceil\log (n+1)\rceil})$ for $i=0,\ldots,n$.\;
		$f_i^+:=|\tilde{f}_i|+2^{-L-\lceil\log (n+1)\rceil}$ for $i=0,\ldots,n$.\;
		\atcp{\normalfont{\textit{lower and upper bounds for $|f_i|$.}}}
		\If{$f_k^--\sum_{i\neq k} f_i^+>0$}{\Return True}
		\atcp{\normalfont{\textit{It follows that $T_k(\Delta,F)$ succeeds.}}}
		\If{$ \sum_{i\neq k} f_i^- - f_k^+ > 0 $}{\Return False}
		\atcp{\normalfont{\textit{It follows that $T_k(\Delta,F)$ fails.}}}
		\If{$\sum_{i\neq k} f_i^--\frac{2}{3}\cdot f_k^+\ge 0$\text{ and }
			$\frac{3}{2}\cdot f_k^--\sum_{i\neq k} f_i^+\ge 0$}{\Return False}
		$L:=2\cdot L$\;
	}
	\caption{$\tilde T_k(\Delta,F)$-test}
\end{algorithm}

\begin{lemma}
	\label{tildeTcost}
	For a disk $\Delta:=\Delta(m,r)$ in the complex plane and a polynomial $F\in\mathbb{C}[x]$ of degree $n$, the $\tilde T_k(\Delta,F)$-test terminates with an absolute precision $L$ that is upper bounded by
	\begin{align}\label{def:epsilon}
		L(\Delta,F):=L(m,r,F):=2\cdot\left(4+\LOG(\|F_{\Delta}\|_\infty^{-1})\right).
	\end{align}
	If $T_k(\Delta,\frac{3}{2},F)$ succeeds, the $\tilde T_k(\Delta,F)$-test returns True.
	The cost for running the $\tilde{T}_k(\Delta,F)$-test \emph{for all} $k=0,\ldots,n$ is upper bounded by
	\[
		\tilde{O}(n(n\cdot \LOG(m, r)+\tau_F+L(\Delta,F)))
	\]
	bit operations. The algorithm needs an $\tilde{O}(n\cdot \LOG(m, r)+\tau_F+L(\Delta,F))$-bit approximation of $F$.
\end{lemma}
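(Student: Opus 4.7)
The plan is to split the statement into four parts and handle them in order, reducing as much as possible to Lemma~\ref{precision:compare} applied to the expressions $E_{\ell}:=|f_k|$ and $E_r:=\sum_{i\neq k}|f_i|$, where $f_i=F^{(i)}(m)r^i/i!$ are the coefficients of $F_{\Delta}$. The key observation is that the quantities $f_i^{\pm}$ computed in Algorithm~\ref{algo:softTk} from an $(L+\lceil\log(n+1)\rceil)$-bit approximation of $F_{\Delta}$ satisfy $f_i^-\le|f_i|\le f_i^+$ with $f_i^+-f_i^-\le 2\cdot 2^{-L-\lceil\log(n+1)\rceil}$. Summing over $i$, the quantities $\sum_{i\neq k}f_i^{\pm}$ are $2^{-L}$-approximations of $E_r$, and $f_k^{\pm}$ are $2^{-L}$-approximations of $E_\ell$. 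Thus Algorithm~\ref{algo:softTk} is precisely an instance of Algorithm~\ref{algo:softpredicate} applied to the predicate $\mathcal{P}=T_k(\Delta,F)$.

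For the first claim (termination and precision bound), I would invoke Lemma~\ref{precision:compare} directly. Note that $\max(E_\ell,E_r)\ge\max_i|f_i|=\|F_{\Delta}\|_\infty$ because $\max(|f_k|,\sum_{i\neq k}|f_i|)\ge\max_i|f_i|$. Hence $\LOG(\max(E_\ell,E_r)^{-1})\le\LOG(\|F_{\Delta}\|_\infty^{-1})$, and the lemma gives termination once $L\ge 2(\LOG(\|F_{\Delta}\|_\infty^{-1})+4)=L(\Delta,F)$. The second claim (that success of the strong predicate $T_k(\Delta,\tfrac{3}{2},F)$ forces the soft test to return True) then follows from the observation already made in the passage preceding the lemma: if $E_\ell>\tfrac{3}{2}E_r$, then $E_r<\tfrac{2}{3}E_\ell$, so the ``Undecided'' condition in line 11 cannot be satisfied with sufficient precision; the False branch in line 8 is also excluded since $E_\ell>E_r$; hence by termination only the True branch in line 6 can be reached.

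For the bit complexity of running the test for all $k=0,\ldots,n$, I would share work across $k$ and across the doubling of $L$. At each precision level $L$, the dominant cost is the single Taylor shift $F\mapsto F_{\Delta}(x)=F(m+rx)$, which is computed from an $L'$-bit approximation of $F$ with $L'=L+\lceil\log(n+1)\rceil+O(n\LOG(m,r))$ so that the error of each coefficient of $F_{\Delta}$ is at most $2^{-L-\lceil\log(n+1)\rceil}$; this is a standard bound on the error propagated through a Taylor shift since $|c_i|\le 2^{O(n)}\cdot\|F\|_\infty\cdot\MAX(m,r)^n$. Using fast Taylor shift, this costs $\tilde O(n\cdot(L+\tau_F+n\LOG(m,r)))$ bit operations. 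Once $F_{\Delta}$ is at hand, the lower/upper bounds $f_i^{\pm}$ are obtained in $\tilde O(nL)$ bit operations, the two total sums $\sum_i f_i^{\pm}$ are formed once, and each of the $n+1$ comparisons is performed in $O(1)$ further operations on $(L+\tau_F+n\LOG(m,r))$-bit numbers by subtracting $f_k^{\pm}$. The outer loop doubles $L$ until termination, which by part one occurs by $L\le L(\Delta,F)$, so the total work telescopes geometrically to the last iteration, giving the claimed bound $\tilde O(n(n\LOG(m,r)+\tau_F+L(\Delta,F)))$.

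The input precision bound follows from the Taylor shift error analysis above: to guarantee an $(L(\Delta,F)+\lceil\log(n+1)\rceil)$-bit approximation of $F_{\Delta}$ one needs an $L'$-bit approximation of $F$ with $L'=\tilde O(L(\Delta,F)+n\LOG(m,r))$, which is absorbed by the claimed bound after adding $\tau_F$. The main technical obstacle I expect is the Taylor shift error analysis, that is, justifying precisely how much precision is lost in evaluating $F(m+rx)$ in terms of $n$, $\LOG(m,r)$, and $\tau_F$. Once this bound is pinned down, the remainder of the argument is a routine combination of Lemma~\ref{precision:compare} with a geometric-series summation over the doubling schedule of $L$.
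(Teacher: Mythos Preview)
Your proposal is correct and follows essentially the same approach as the paper: both reduce termination to Lemma~\ref{precision:compare} via the identification $E_\ell=|f_k|$, $E_r=\sum_{i\neq k}|f_i|$ and the observation $\max(E_\ell,E_r)\ge\|F_\Delta\|_\infty$, then handle the bit complexity by a single Taylor shift per precision level (citing known bounds of the form $\tilde O(n(\tau_F+n\LOG(m,r)+L))$), sharing the sums across all $k$ by an $O(1)$ update per $k$, and summing geometrically over the doubling schedule of $L$. The paper's proof is slightly terser in that it simply cites \cite[Lemma~17]{Sagraloff2015} for the Taylor shift cost and precision loss rather than sketching the error analysis, but the structure is identical.
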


\begin{proof}
	Let $\mathcal{P}:=T_k(\Delta,1,F)$ be the predicate that succeeds if and only if $E_{\ell}>E_r$, with $E_{\ell}:=|f_k|$ and $E_{r}:=\sum_{i\neq k}|f_i|$. Then, $E_{\ell}^{\pm}:=f_k^{\pm}$ and $E_r^{\pm}:=\sum_{i\neq k} f_i^{\pm}$ are lower and upper bounds for $E_{\ell}$ and $E_r$, respectively, such that $|E_{\ell}^{\pm}-E_{\ell}|\le 2^{-L+1}$ and $|E_{r}^{\pm}-E_{r}|\le 2^{-L+1}$. Hence, Lemma~\ref{precision:compare} yields that Algorithm~\ref{algo:softTk} terminates for an $L$ smaller than $2\cdot(4+\LOG(\max(E_{\ell},E_r)^{-1}))\le L(\Delta,F)$.

	We have already argued above that success of the predicate $\mathcal{P}_{\frac{3}{2}}=T_k(\Delta,\frac{3}{2},F)$ implies that $\tilde{P}=\tilde{T}_k(\Delta,F)$ returns True. Hence, it remains to show the claim on the bit complexity for carrying out the $\tilde T_k(\Delta,F)$-test for all $k=0,\ldots,n$.
	For a given $L$, we can compute an $(L+\lceil \log (n+1)\rceil)$-bit approximation $\tilde F_{\Delta}(x)=\sum_{i=0}^n\tilde{f}_ix^i$ of $F_{\Delta}$ with a number of bit operations that is bounded by $\tilde{O}(n(\tau_F+n\LOG(m, r)+L))$; e.g. see the first part of the proof of~\cite[Lemma~17]{Sagraloff2015}. For a fixed $k$, the computation of the signs of the sums in each of the three IF clauses needs $n$ additions
	of dyadic numbers with denominators of bit size $\lceil \log(n+1)\rceil+L$ and with numerators of bit size $O(L+n\LOG(r)+\tau_F)$, hence the cost is bounded by $O(n(\tau_F+n\LOG(r)+L))$ bit operations. Notice that, when passing from an integer $k$ to a $k'\neq k$, the corresponding sums in one IF clause differ only by two terms, that is, $f^{\pm}_k$ and $f^{\pm}_{k'}$. Hence, we can decide all IF clauses \emph{for all $k$} using $O(n)$ additions. Furthermore, we double the precision $L$ in each step, and the algorithm terminates for an $L$ smaller than $L(\Delta,F)$. Hence, $L$ is doubled at most $\log L(\Delta,F)$ many times, and thus the total cost for all $k$ is bounded by $\tilde{O}(n(\tau_F+n\LOG(m, r)+L(\Delta,F)))$ bit operations.
\end{proof}

We now extend the above soft-variant of the $T_k$-test to a corresponding soft-variant of the $T_k^G$-test, which we denote $\tilde T_k^G$; see Algorithm~\ref{algo:softgraeffe2} for details. We further combine $\tilde{T}_k^G$ for all $k=0,\ldots,n$ to obtain $\T(\Delta,F)$ with
\begin{align}\label{def:T*}
	\T(\Delta,F):=\begin{cases}
	k\quad\text{if there exists a }k\text{ such that }\tilde{T}_k^G(\Delta)\text{ succeeds} \\
	-1\text{ otherwise.}
	\end{cases}
\end{align}

Again, for brevity, we often omit $F$ and just write $\T(\Delta)$.
We say that $\T$ succeeds if it returns a non-negative value. Otherwise, it fails.

The following result, which can be considered as the ``soft variant'' of Lemma~\ref{softtest:success}, can then immediately be deduced from Lemma~\ref{softtest:success} and Lemma~\ref{tildeTcost}:

\begin{algorithm}[t]\label{algo:softgraeffe2}
	\DontPrintSemicolon
	\SetKwData{Left}{left}\SetKwData{This}{this}\SetKwData{Up}{up}
	\SetKwFunction{Union}{Union}\SetKwFunction{FindCompress}{FindCompress}
	\SetKwInOut{Input}{Input}\SetKwInOut{Output}{Output}
	\Input{Polynomial $F(x)\in\mathbb{C}[x]$ of degree $n$, a disk $\Delta:=\Delta(m,r)$ in the complex space.}
	\Output{True, False, or Undecided. If the algorithm returns True, $\Delta$ contains exactly $k$ roots of $F$.}
	\BlankLine
	Let $F^{[N]}_{\Delta}(x)$ be the $N$-th Graeffe iterate of $F_{\Delta}(x):=F(m+r\cdot x)$, where $N:=\lceil\log(1 +\log n)\rceil+5$\;
	Output $\tilde T_k(0,1,F^{[N]}_{\Delta})$.
	\caption{$\tilde T_k^G(\Delta,F)$-Test}
\end{algorithm}

\begin{lemma}[Soft-version of Lemma~\ref{softtest:success}]\label{softtest:success2}
	Let $\Delta:=\Delta(m,r)$ be a disk in the complex plane, $F(x)\in\mathbb{C}[x]$ be a polynomial of degree $n$, and let $\rho_1=\frac{2\sqrt{2}}{3}$ and $\rho_2=\frac{4}{3}$. Then, it holds:
	\begin{enumerate}[(a)]
		\item\label{softtest:firstpart} If $\Delta$ is $(\rho_1,\rho_2)$-isolating for a set of $k$ roots of $F$, then $\T(\Delta)$ returns $k$.
		\item\label{softtest:secondpart} If $\T(\Delta)$ returns a $k\ge 0$, then $\Delta$ contains exactly $k$ roots.
	\end{enumerate}
\end{lemma}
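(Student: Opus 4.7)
The plan is to derive both parts directly from Lemma~\ref{softtest:success} by exploiting the two basic properties of the soft-predicate machinery developed in Section~\ref{sec:graeffeapx}: namely, (i) if $\tilde{\mathcal{P}}$ returns True then the underlying strict predicate $\mathcal{P}$ succeeds, and (ii) if the strengthened predicate $\mathcal{P}_{3/2}$ succeeds then $\tilde{\mathcal{P}}$ returns True. These properties were established in the discussion preceding Lemma~\ref{tildeTcost} and are precisely what relate $T_k$, $T_k^G$ and their soft analogues $\tilde T_k$, $\tilde T_k^G$.

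For part (\ref{softtest:firstpart}), suppose $\Delta$ is $(\rho_1,\rho_2)$-isolating for a set of $k$ roots. By Lemma~\ref{softtest:success}(a), the hard test $T_k^G(\Delta,\tfrac{3}{2})$ succeeds, which by the definition of $T_k^G$ (Algorithm~\ref{algo:graeffe2}) is exactly the statement that $T_k(0,1,\tfrac{3}{2},F^{[N]}_\Delta)$ succeeds. Applying property (ii) to the predicate $\mathcal{P}=T_k(0,1,1,F^{[N]}_\Delta)$ with $E_\ell$ and $E_r$ the two sides of inequality~\eqref{Tktest} for $F^{[N]}_\Delta$, we conclude that $\tilde T_k(0,1,F^{[N]}_\Delta)$ returns True, that is, $\tilde T_k^G(\Delta)$ returns True. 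By the definition~\eqref{def:T*} of $\T$ this forces $\T(\Delta)=k$, provided we verify that $\tilde T_{k'}^G(\Delta)$ cannot simultaneously return True for any $k'\neq k$. But that follows at once from part (\ref{softtest:secondpart}) applied to such a $k'$: it would imply that $\Delta$ contains exactly $k'$ roots, contradicting the assumption that $\Delta$ contains exactly $k$ roots.

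For part (\ref{softtest:secondpart}), assume $\T(\Delta)$ returns some $k\ge 0$. By definition this means that $\tilde T_k^G(\Delta)$ returned True, i.e.\ $\tilde T_k(0,1,F^{[N]}_\Delta)$ returned True. Property (i) then guarantees that the underlying strict predicate $T_k(0,1,1,F^{[N]}_\Delta)$ succeeds. Pellet's theorem (Theorem~\ref{thm:pellet}) now implies that the unit disk $\Delta(0,1)$ contains exactly $k$ roots of $F^{[N]}_\Delta$. Finally, by Theorem~\ref{graeffe thm} the roots of $F^{[N]}_\Delta$ are precisely the $2^N$-th powers of the roots of $F_\Delta$, and since $|z|<1$ is equivalent to $|z^{2^N}|<1$, the number of roots of $F_\Delta$ in $\Delta(0,1)$ equals the number of roots of $F^{[N]}_\Delta$ in $\Delta(0,1)$. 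Translating back via $x\mapsto m+rx$, this is the number of roots of $F$ in $\Delta$, which is therefore exactly $k$.

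The main obstacle is essentially bookkeeping: one has to track the chain hard $\Rightarrow$ soft in one direction and soft $\Rightarrow$ hard in the other, and to observe that a Graeffe iteration preserves the count of roots inside the unit disk (a fact which is implicit in Theorem~\ref{graeffe thm} and is what makes the $\tilde T_k^G$-test meaningful). No new quantitative estimates are required beyond those already in place.
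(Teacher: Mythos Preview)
Your proof is correct and follows exactly the route the paper intends: the paper does not give a separate proof but simply remarks that the lemma ``can then immediately be deduced from Lemma~\ref{softtest:success} and Lemma~\ref{tildeTcost},'' and you have written out precisely that deduction. One small stylistic remark: in part~(b) you could cite Lemma~\ref{softtest:success}(b) directly (success of $T_k^G(\Delta,1)$ already yields the root count) rather than re-deriving it via Pellet and Graeffe, and in part~(a) the uniqueness argument really uses the intermediate claim ``$\tilde T_{k'}^G$ True $\Rightarrow$ $\Delta$ has $k'$ roots'' from the \emph{proof} of~(b) rather than the statement of~(b) itself, but neither point affects correctness.
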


For the complexity analysis of our root isolation algorithm (see Section~\ref{sec:algorithm}), we provide a bound on the total cost for running the $\T$-test.

\begin{lemma}
	\label{allkcost}
	The total cost for carrying out the $\T(\Delta)$ is bounded by
	\begin{align*}
		  & \tilde{O}(n(\tau_F+n\LOG(m,r)+L(\Delta,F)))
		=\tilde{O}(n(\tau_F+n\LOG(m,r)+\LOG((\max_{z\in \Delta}|F(z)|)^{-1})))
	\end{align*}
	bit operations. For this, we need an $L$-bit approximation of $F$ with
	\[
		L=\tilde{O}(\tau_F+n\LOG(m,r)+L(\Delta,F))=\tilde{O}(\tau_F+n\LOG(m,r)+\LOG((\max_{z\in \Delta}|F(z)|)^{-1})).
	\]
\end{lemma}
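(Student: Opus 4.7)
The proof will unfold by analyzing the three phases of $\T(\Delta)$ separately. Recall that $\T(\Delta,F)$ simply calls $\tilde T_k^G(\Delta,F)$ for $k=0,1,\dots,n$, and each $\tilde T_k^G$ call first computes the $N$-th Graeffe iterate $G := F_{\Delta}^{[N]}$ of the shifted polynomial $F_{\Delta}(x)=F(m+rx)$ (with $N=\lceil \log(1+\log n)\rceil+5 = O(\log\log n)$), and then runs $\tilde T_k(0,1,G)$. Since $G$ does not depend on $k$, we compute it once and share it across all $n+1$ tests. The total cost therefore splits into (i) the Taylor shift $F\mapsto F_{\Delta}$, (ii) the $N$ Graeffe iterations $F_{\Delta}\mapsto F_{\Delta}^{[N]}$, and (iii) running $\tilde T_k$ on $G$ for all $k$.

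I will handle (iii) first because it is essentially an off-the-shelf application of Lemma~\ref{tildeTcost} to the polynomial $G$ of degree $n$ with disk $\Delta(0,1)$, giving cost $\tilde O(n(\tau_G + L(0,1,G)))$. The Corollary following Theorem~\ref{graeffe thm} bounds both $\tau_G=\LOG(\|G\|_\infty)$ and $\LOG(\|G\|_\infty^{-1})$ by $\tilde O(n+\LOG(\|F_\Delta\|_\infty,\|F_\Delta\|_\infty^{-1}))$. A routine bound on a Taylor shift gives $\tau_{F_{\Delta}}=O(\tau_F+n\LOG(m,r)+n)$, while $\LOG(\|F_{\Delta}\|_\infty^{-1})\le L(\Delta,F)/2$ by definition~\eqref{def:epsilon}. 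Combining these, the cost of phase (iii) is $\tilde O(n(\tau_F + n\LOG(m,r) + L(\Delta,F)))$, matching the target.

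For (i) and (ii), I will use standard fast arithmetic. A fast Taylor shift (cf.\ the proof of~\cite[Lemma~17]{Sagraloff2015}) applied to an $L$-bit approximation of $F$ produces an approximation of $F_{\Delta}$ of comparable precision at cost $\tilde O(n(\tau_F+n\LOG(m,r)+L))$. Each Graeffe step is a single polynomial squaring (of $F_{\Delta}^{[i-1]}_e$ and $F_{\Delta}^{[i-1]}_o$), which costs $\tilde O(n\cdot b)$ bit operations when the coefficients are carried to $b$ bits of precision; by the Corollary after Theorem~\ref{graeffe thm}, throughout all $N$ iterations the coefficient magnitudes of $F_{\Delta}^{[i]}$ stay in the window $\tilde O(n+\tau_{F_\Delta}+\LOG(\|F_\Delta\|_\infty^{-1}))=\tilde O(\tau_F+n\LOG(m,r)+L(\Delta,F))$, so the same bound applies to the working precision $b$. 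The $N=O(\log\log n)$ iterations therefore cost a total of $\tilde O(n(\tau_F+n\LOG(m,r)+L(\Delta,F)))$, which is absorbed into the $\tilde O(\cdot)$.

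The main obstacle is the precision bookkeeping: I must exhibit an initial precision $L$ on the input coefficients of $F$ such that, after the Taylor shift (which can amplify absolute errors by $\MAX(m,r)^n$) and then $N$ Graeffe squarings (each of which can double the relative error), the resulting approximation of $G$ is still accurate to the precision $L(0,1,G)$ demanded by $\tilde T_k$. Since Taylor shift contributes $n\LOG(m,r)$ bits of error amplification, each Graeffe step contributes $O(\tau_{F_\Delta^{[i]}})=\tilde O(n+\tau_F+n\LOG(m,r)+\LOG(\|F_\Delta\|_\infty^{-1}))$ bits, and there are only $O(\log\log n)$ such steps, choosing $L=\tilde O(\tau_F+n\LOG(m,r)+L(\Delta,F))$ is enough, with the polylogarithmic factors absorbing both the $N$ doublings and the $\log L$ doublings of the adaptive loop inside $\tilde T_k$ (Lemma~\ref{tildeTcost}). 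Finally, the alternative formulation using $\max_{z\in\Delta}|F(z)|$ follows from $\max_{z\in\Delta}|F(z)|\le (n+1)\|F_\Delta\|_\infty$ (equation~\eqref{formula:FinftyDelta}), which gives $\LOG(\|F_\Delta\|_\infty^{-1})\le \LOG((\max_{z\in\Delta}|F(z)|)^{-1})+O(\log n)$, again absorbed by the $\tilde O$.
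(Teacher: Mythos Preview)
Your proposal is correct and follows essentially the same approach as the paper's proof: both split the work into the Taylor shift, the $N=O(\log\log n)$ Graeffe iterations, and the application of Lemma~\ref{tildeTcost} to $F_{\Delta}^{[N]}$ on the unit disk, then use Theorem~\ref{graeffe thm} (or its Corollary) to bound $\LOG(\|F_{\Delta}^{[i]}\|_\infty,\|F_{\Delta}^{[i]}\|_\infty^{-1})$ uniformly in $i$, and track the precision loss through each phase to arrive at the stated $\tilde O$-bounds. The only cosmetic difference is that the paper treats the precision loss per Graeffe step as $O(\log n+\log\|F_{\Delta}^{[i]}\|_\infty)$ and then sums, whereas you phrase it as ``each Graeffe step contributes $O(\tau_{F_\Delta^{[i]}})$ bits''; these are the same estimate.
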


\begin{proof}
	According to Lemma~\ref{tildeTcost}, the computation of $\tilde{T}_k(0,1,F_{\Delta}^{[N]})$ needs an $L$-bit approximation $\tilde{F}_{\Delta}^{[N]}$ of $F_{\Delta}^{[N]}$ , with $L$ bounded by
	\begin{align}\label{boundforL}
		\tilde{O}(n+\tau_{F_\Delta^{[N]}}+L(0,1,F_{\Delta}^{[N]}))=\tilde{O}(n+\LOG(\|F_{\Delta}^{[N]}\|_\infty,\|F_{\Delta}^{[N]}\|_\infty^{-1})).
	\end{align}
	Given such an approximation $\tilde{F}_{\Delta}^{[N]}$, the cost for running the test for all $k=0,\ldots,n$ is then bounded by $\tilde{O}(n(n+\tau_{F_\Delta^{[N]}}+L))$ bit operations. In each of the $N=O(\log \log n )$ Graeffe iterations, the size of $\LOG(\|F_{\Delta}^{[i]}\|_\infty,\|F_{\Delta}^{[i]}\|_\infty^{-1})$ increases by at most a factor of two plus an additive term $4n$; see Theorem~\ref{graeffe thm}. Hence, we must have
	\begin{align*}
		\LOG(\|F_{\Delta}^{[i]}\|_\infty,\|F_{\Delta}^{[i]}\|_\infty^{-1})
		  & = O(\log n\cdot \LOG(\|F_{\Delta}\|_\infty,\|F_{\Delta}\|_\infty^{-1})+n\log n) \\
		  & =\tilde{O}(n\LOG(m,r)+\tau_F+L(\Delta,F))
	\end{align*}
	for all $i=0,\ldots,N$.
	We conclude that the above bound (\ref{boundforL}) for $L$ can be replaced by $\tilde{O}(\tau_F+n\LOG(m,r)+L(\Delta,F))$.

	It remains to bound the cost for computing an approximation $\tilde{F}_{\Delta}^{[N]}$ of $F_{\Delta}^{[N]}$ with $\|F_{\Delta}^{[N]}-\tilde{F}_{\Delta}^{N}\|_{\infty}<2^{-L}$.
	Suppose that, for a given $\rho\in\mathbb{N}$ we have computed an approximation $\tilde{F}_{\Delta}$ of $F_{\Delta}$, with $\|F_{\Delta}-\tilde{F}_{\Delta}\|_{\infty}<2^{-\rho}$. According to~\cite[Theorem~8.4]{Schoenhage82} (see also \cite[Theorem~14]{DBLP:journals/corr/abs-1304.8069} and \cite[Lemma~17]{Sagraloff2015}), this can be achieved using a number of bit operations bounded by $\tilde{O}(n(n\LOG(m,r)+\tau_F+\rho))$.
	In each Graeffe iteration, an approximation $\tilde{F}_{\Delta}^{[i]}$ of $F_{\Delta}^{[i]}$ is split into two polynomials $\tilde{F}_{\Delta,o}^{[i]}$ and $\tilde{F}_{\Delta,e}^{[i]}$ with coefficients of comparable bit size (and half the degree), and an approximation $\tilde{F}_{\Delta}^{[i+1]}$ of $F_{\Delta}^{[i]}$ is then computed as the difference of $\tilde{F}_{\Delta,e}^{[i]}$ and $x\cdot \tilde{F}_{\Delta,o}^{[i]}$.
	If all computations are carried out with fixed point arithmetic and an absolute precision of $\rho$ bits after the binary point, then the precision loss in the $i$-th step, with $i=0,\ldots,N$, is bounded by $O(\log n+\log\|F_{\Delta}^{[i]}\|_\infty)=O(2^i(\log n+\log\|F_{\Delta}\|_\infty))=O(\log n(\log n+\log\|F_{\Delta}\|_\infty))$ bits after the binary point. The cost for the two multiplications and the addition is bounded by $\tilde{O}(n(\rho+\log\|F_{\Delta}^{[i]}\|_\infty))$.
	Since there are only $N=O(\log\log n)$ many iterations, we conclude that it suffices to start with an approximation $\tilde{F}_{\Delta}$ of $F_{\Delta}$, with $\|F_{\Delta}-\tilde{F}_{\Delta}\|_{\infty}<2^{-\rho}$ and $\rho=\tilde{O}(n\LOG (m,r)+\tau_F+L(\Delta,F))$.
	The total cost for all Graeffe iterations is then bounded by $\tilde{O}(n\rho)$ bit operations, hence the claim follows together with the fact that $\max_{z\in\Delta}|F(z)|\le (n+1)\|F_{\Delta}\|_{\infty}$ as shown in~\eqref{formula:FinftyDelta} in the proof of Theorem~\ref{graeffe thm}.
\end{proof}


\section{\texorpdfstring{$\CC$\textsc{Isolate}}{CIsolate}: An Algorithm for Root Isolation}\label{sec:algorithm}

We can now formulate our algorithm, which we denote by $\mathbb{C}$\textsc{Isolate}, to isolate all complex roots of a polynomial $F(x)$ that are contained in some given square\footnote{As already mentioned in Section~\ref{sec:definitions}, we only consider closed, axis-aligned squares $B\subset\mathbb{C}$. Hence, these properties are not further mentioned throughout the following considerations.} $\mathcal{B}\subset \mathbb{C}$.
If the enlarged square $2\mathcal{B}$ contains only simple roots of $F$, then our algorithm returns
isolating disks for all roots that are contained in $\mathcal{B}$. However, it might also
return isolating disks for some of the roots that are not contained in $\mathcal{B}$ but in
the complement $2\mathcal{B}\setminus \mathcal{B}$. In particular, in the important special case,
where $F$ is square-free and where we start with a square $\mathcal{B}$ that is known to contain
all complex roots of $F$, our algorithm isolates all complex roots of $F$.
Before we give details, we need some further definitions, which we provide in Section~\ref{sec:defcomponent}. In Section~\ref{subsec:algorithm}, we first give an overview of our algorithm before we provide details and the proof for termination and correctness.


\subsection{Connected Components}\label{sec:defcomponent}

Given a set $S=\{B_1,\ldots,B_m\}$
of squares $B_1,\ldots,B_m\subset \mathbb{C}$, we say that two squares $B,B'\in S$ are connected in $S$ ($B\sim_{S} B'$ for short) if there
exist squares $B_{i_1},\ldots,B_{i_{s'}}\in S$ with $B_{i_1}=B$, $B_{i_{s'}}=B'$, and $B_{i_j}\cap B_{i_{j+1}}\neq \emptyset$ for all $j=1,\ldots,s'-1$.
This yields a decomposition of $S$ into equivalence classes $C_1,\ldots,C_k\subset S$ that correspond to maximal connected and disjoint components $\bar{C}_{\ell}=\bigcup_{i:B_i\in C_{\ell}} B_i$, with $\ell=1,\ldots,k$. Notice that $C_{\ell}$ is defined as the set of squares $B_i$ that belong to the same equivalence class, whereas $\bar{C}_{\ell}$ denotes the closed region in $\mathbb{C}$ that consists of all points that are contained in a square $B_i\in C_{\ell}$. However, for simplicity, we abuse notation and simply use $C$ to denote the set of squares $B$ contained in a component $C$ as well as to denote the set of points contained in the closed region $\bar{C}$.
Now, let $C=\{B_1,\ldots,B_s\}$ be a connected component consisting of equally sized squares $B_i$ of width $w$, then we define (see also Figure~\ref{fig:yyy}):
\begin{itemize}
	\item $B_C$ is the axis-aligned closed square in $\mathbb{C}$ of minimal width such that $C\subset B_C$ and
	      \[
	      	\min_{z\in B_C}\Re(z)=\min_{z\in C} \Re(z)
	      	\text{ and }
	      	\max_{z\in B_C}\Im(z)=\max_{z\in C} \Im(z),
	      \]
	      where $\Re(z)$ denotes the real part and $\Im(z)$ the imaginary part of an arbitrary complex value $z$.
	      We further denote $m_C$ the center of $B_C$, and $\Delta_C:=\Delta(m_C,\frac{3}{4} w(B_C))$ a disk containing $B_C$, and thus also $C$. 

	      We further define the \emph{diameter $w(C)$ of the component $C$} to be the width of $B_C$, i.e. $w(C):=w(B_C)$, and $r(C):=\frac{w(C)}{2}$ to be the \emph{radius of~$C$}. 
	\item $C^+:=\bigcup_{i:B_i\in C}2B_i$ is defined as the union of the enlarged squares $2 B_i$. Notice that $C^+$ is the $\frac{w}{2}$-neighborhood of $C$ (w.r.t. max-norm).
\end{itemize}


\subsection{The Algorithm}\label{subsec:algorithm}

We start with an informal description of our algorithm $\mathbb{C}\textsc{Isolate}$, where we focus on the main ideas explaining the ratio behind our choices. For the sake of
comprehensibility, we slightly simplified some steps at the cost of complete formal correctness, hence,
the considerations below should be taken with a grain of salt. A precise definition of
the algorithm including all details is given in Algorithm~\ref{cisolate} and the
subroutines \textsc{NewtonTest} (Algorithm~\ref{newtontest}) and \textsc{Bisection}
(Algorithm~\ref{bisection}).

\begin{algorithm}[t!]
	\DontPrintSemicolon
	\SetKwInOut{Input}{Input}\SetKwInOut{Output}{Output}
	\Input{A polynomial $F(x)\in\mathbb{C}[x]$ as in (\ref{def:polyF}) and a square $\mathcal{B}\subset \mathbb{C}$ of width $w_0:=w(\mathcal{B})=2^{\ell_0}$, with $\ell_0\in\mathbb{Z}$; $F$ has only simple roots in $2\mathcal{B}$.}
	\Output{A list $\mathcal{O}$ of disjoint disks $\Delta_1,\ldots,\Delta_s\subset \mathbb{C}$ such that, for each $i=1,\ldots,s$, the disk $\Delta_i$ as well as the enlarged disk $2\Delta_i$ is isolating for a root of $F$ that is contained in $2\mathcal{B}$. In addition, for each root $z\in\mathcal{B}$, there exists a disk $\Delta_i\in\mathcal{O}$ that isolates $z$.}
	\BlankLine

	$\mathcal{O} = \{\}$  \atcp{\normalfont{\textit{list of isolating disks}}}
	$\mathcal{C} = \{(\mathcal{B},4)\}$  \atcp{\normalfont \textit{list of pairs $(C,N_C)$, with $C$ a connected com-}}
	\atcp{\normalfont \textit{ponent consisting of $s_C$ equally sized squares,}}
	\atcp{\normalfont \textit{each of width $2^{\ell_C}$, where $\ell_C\in\mathbb{Z}_{\le \ell_0}$. $N_C$ is an}}
	\atcp{\normalfont \textit{integer with $N_C=2^{2^{n_C}}$ and $n_C\in\mathbb{N}_{\ge 1}$.}}
	\medskip
	\nonl\texttt{//} \textit{* Preprocessing *}\texttt{//}\;
	\Repeat{$\bigcup_{C:(C,N_C)\in\mathcal{C}}C \neq \mathcal{B}$}{
		Let $(C,N_C)$ be the unique pair in $\mathcal{C}$\;
		\atcp{\normalfont \textit{If $\bigcup_{C:(C,N_C)\in\mathcal{C}}C = \mathcal{B}$, then there exists a}}
		\atcp{\normalfont \textit{unique component $C$ with $(C,N_C)\in\mathcal{C}$.}}\nonl
		\nonl\texttt{//} \textit{* linear step *}\texttt{//}\;
		$\{C'_1,\ldots, C'_\ell\} \as \textsc{Bisection}(C)$ and $\mathcal{C} = \{(C'_1,4),\ldots, (C'_\ell,4)\}$
	}
	\medskip
	\nonl\texttt{//} \textit{* Main Loop *}\texttt{//}\;
	\While{$\mathcal{C}$ is non-empty}{
		Remove a pair $(C,N_C)$ from $\mathcal{C}$.\;
		\If{$4\Delta_C \cap C' = \emptyset$ for each $(C',N_{C'})\in\mathcal{C}$ with $C'\neq C$ {\bf and} there exists a $k_C\in\{1,\ldots,n\}$ such that $k_C=\T(2\Delta_C)=\T(4\Delta_C)$\medskip \label{testbeforenewton}}{
			\atcp{\normalfont \textit{If the second condition holds, $k_C$ equals the}}\nonl
			\atcp{\normalfont \textit{number of roots contained in $2\Delta_C$ and $4\Delta_C$.}}\nonl
			\If{$k_C=1$}{
				Add the disk $2\Delta_C$ to $\mathcal{O}$, \textbf{continue}\label{addisolating}\;
			}
			\If{$k_C>1$}{ \label{ReachesNewtonTest}
				Let $x_C\in\mathcal{B}\setminus C$ be an arbitrary point with distance $2^{\ell_C-1}$ from~$C$ and distance $2^{\ell_C-1}$ or more from the boundary of~$\mathcal{B}$.\label{ChoosePoint}\;
				\atcp{\normalfont \textit{Existence of such a point follows from the}}
				\atcp{\normalfont \textit{proof of Theorem~\ref{thm:termination}. It holds that $F(x_C)\neq 0$.}}
				\If{\textsc{NewtonTest}$(C, N_C, k_C, x_C)=(\textsc{Success},C')$}{
					\nonl\texttt{//} \textit{* quadratic step *}\texttt{//}\;
					Add $(C', N_C^2)$ to $\calC$, \textbf{continue} \label{newtonadd}\;
				}
			}
		}

		\nonl\texttt{//} \textit{* linear step *}\texttt{//}\;
		$\{C'_1,\ldots, C'_{\ell}\} \as$ \textsc{Bisection}$(C)$.\;
		Add $(C'_1, \max(4, \sqrt{N_C})), \ldots , (C'_\ell, \max(4, \sqrt{N_C}))$ to $\calC$.\label{bisectionadd} \;
	}
	\Return $\mathcal{O}$.
	\;
	\caption{$\mathbb{C}\textsc{Isolate}$}
	\label{cisolate}
\end{algorithm}

From a high-level perspective, our algorithm follows the classical subdivision approach of Weyl~\cite{Weyl}.
That is, starting from the input square $\mathcal{B}$, we recursively subdivide
$\mathcal{B}$ into smaller squares, and we remove squares for which we can show that they do not contain a root of $F$. Eventually, the algorithm returns regions that are isolating for a root of $F$. In order to discard a square $B$, with $B\subset\mathcal{B}$, we call the $\T(\Delta_B,F)$-test\footnote{In fact, it suffices to just call the $\tilde{T}_0^G(\Delta_B,F)$-test. However, since $\tilde{T}_0^G$ and $\T$ have comparable complexity, we just stick to $\T$ to simplify the presentation.}, with $\Delta_B$ the disk containing $B$.
The remaining squares are then clustered into maximal connected components. We further check whether a
component $C$ is well separated from all other components, that is, we test whether the distance from $C$ to all other components is considerably larger than its diameter. If this is the case,
we use the $\T$-test in order to determine the ``multiplicity''
$k_C$ of the component $C$, that is, the number of roots contained in the enclosing
disk $\Delta_C$; see Line~\ref{testbeforenewton} of Algorithm~\ref{cisolate} and Figure~\ref{fig:yyy} for details. If $k_C=1$, we may return an isolating disk for the corresponding unique root. Otherwise, there is a cluster consisting of two or more roots, which still have to be separated from each other.
A straight-forward approach to separate these roots from each other is to recursively subdivide each square into four equally sized squares and
to remove squares until, eventually, each of the remaining components contains exactly one
root that is well separated from all other roots; see also Algorithm~\ref{bisection} (\textsc{Bisection}) and Figure~\ref{fig:zzz}. However, this approach itself yields only
linear convergence to the roots, and, as a consequence, there might exist (e.g. for Mignotte polynomials) long sequences $C_1,\ldots,C_s$ of
interlaced connected components with
invariant multiplicity $k$, that is $C_1\supset C_2\supset\cdots\supset C_s$ and $k=k_{C_1}=\cdots =k_{C_s}>1$. The main idea to
traverse such sequences more efficiently is to consider a cluster of $k$ roots as a
single root of multiplicity $k$ and to use Newton iteration (for multiple roots) to compute a better approximation of this root.
For this, we use an adaptive trial and error approach similar to the quadratic interval
refinement (QIR) method, first introduced by Abbott~\cite{abbott-quadratic}; see Algorithm~\ref{newtontest} (\textsc{NewtonTest}) and Figure~\ref{fig:xxx}.
In its original form, QIR has been combined with the secant method to efficiently refine an interval
that is already known to be isolating for a real root of a real polynomial. Recent work~\cite{Sagraloff:2012:NMD:2442829.2442872} considers a modified approach of the QIR method that uses Newton iteration (for multiple roots) and Descartes' Rule of Signs. It has been refined and integrated in almost optimal methods~\cite{Sagraloff:2014:NAC:2608628.2608632,Sagraloff2015} for isolating and approximating the real roots of a real (sparse) polynomial, where it constitutes the crucial ingredient for quadratic convergence. In this paper, we further extend the QIR approach for approximating complex roots of a polynomial.
\begin{algorithm}[t!]
	\DontPrintSemicolon
	\SetKwInOut{Input}{Input}\SetKwInOut{Output}{Output}
	\Input{A tuple $(C,N_C,k_C,x_C)$: $C=\{B_1,\ldots,B_{s_C}\}$ is a connected component consisting of equally sized and aligned squares $B_i$ contained in $\mathcal{B}$ and of size $2^{\ell_C}$, $N_C$ is an integer of the form $2^{2^{n_C}}$ with $n_C\in\mathbb{N}_{\ge 1}$, $k_C$ is the number of roots in $4\Delta_C$, and $x_C$ is a point with $F(x_C)\neq 0$.}
	\Output{Either \textsc{Failure} or $(\textsc{Success}, C')$, where $C'\subset C$ is a connected component that contains all roots contained in $C$. $C'$ consists of at most $4$ equally sized and aligned squares, each of width $\frac{2^{\ell_C-1}}{N_C}$.}
	\BlankLine
	\If{Algorithm~\ref{algo:softpredicate} does not return False for the input $E_{\ell}:=4 r(C) |F'(x_C)|$ and $E_r:=|F(x_C)|$}{
		\nonl\vspace{-4mm}\hspace{40mm}\texttt{// }\textit{This implies $|F(x_C)|<6 r(C) |F'(x_C)|$.}\label{softcallinnewton}\;
		\For{$L=1,2,4,\ldots$ \label{resultless}}{
			Compute $L$-bit approximations of $F(x_C)$ and $F'(x_C)$ and derive an $(6-\ell_C+\log N_C)$-bit approximation
			$\wt{x}'_C$ of the Newton iterate
			\begin{align}
				\label{def:newton}
				x_C'\as x_C-k_C\cdot \frac{F(x_C)}{F'(x_C)} \quad\text{ such that }\;|\tilde{x}_C'-x'_C|<\frac{1}{64}\cdot\frac{2^{\ell_C}}{N_C}.\;
			\end{align}
			\atcp{\normalfont \textit{For more details, consider the similar}}
			\atcp{\normalfont \textit{computation in}~\cite[Step~2 of~\textsc{NewtonTest}]{Sagraloff2015}.}
		}
		Let $\Delta' \as \Delta(\wt{x}'_C, \frac{1}{8}\cdot\frac{2^{\ell_C}}{N_C})$.	\label{deltaprime}\;
		\If{$\Delta'\cap C=\emptyset$}{\Return  \textsc{Failure}}\;
		\If{$\T(\Delta')=k_C$ holds
			\atcp{\normalfont \textit{Then, $\Delta'$ contains all roots contained in $2\Delta_C$.}}
			}{
			Decompose each square $B_i$ into $4N_C^2$ many equally sized sub-squares $B_{i,j}$\;
			\Return $(\textsc{Success},C')$, with $C'$ the unique connected component consisting of all squares $B_{i,j}$ of width $\frac{2^{\ell_C-1}}{N_C}$ that intersect $\Delta'$.\label{newtonaddboxes}
		}
	}
	\Return \textsc{Failure}\;
	\caption{$\textsc{NewtonTest}$}
	\label{newtontest}
\end{algorithm}

The main crux of the \textsc{NewtonTest} (and the QIR method in general) is that we never have to check in advance whether Newton
iteration actually yields an improved approximation of the cluster of roots. Instead,
correctness is verified
independently
using the $\T$-test. In order to achieve quadratic convergence in the
presence of a well isolated root cluster, we assign, in each iteration, an integer $N_C$ to each component $C$. The reader may think of $N_C$ as the actual speed of
convergence to the cluster of roots contained in $C$. Then, in case of success of the
\textsc{NewtonTest}, the component $C$ is replaced by a component $C'\subset C$ of diameter
$w(C')\approx w(C)\cdot N_C^{-1}$. In this case, we ``square the speed' of convergence", that is, we set $N_{C'}:=N_C^2$. If the \textsc{NewtonTest} fails, we fall back to bisection and decrease the
speed of convergence, that is, we set $N_{C'}:=\sqrt{N_C}$ for all components $C'$ into which the component~$C$ is split. Our analysis shows that the \textsc{NewtonTest} is the crucial ingredient for quadratic convergence. More precisely, we prove that, in the worst-case, the number $s$ of components in each sequence $C_1,\ldots,C_s$ as above becomes logarithmic in the length of such a sequence if only bisection would be used; see Lemma~\ref{pathlength}.

We now turn to the proof of termination and correctness of the algorithm. In addition, we derive further properties, which will turn out to be useful in the analysis.

\begin{algorithm}[t!]
	\DontPrintSemicolon
	\SetKwInOut{Input}{Input}\SetKwInOut{Output}{Output}
	\Input{A connected component $C=\{B_1,\ldots,B_{s_C}\}$ consisting of aligned squares $B_i$, each of width $w(B_i)=2^{\ell_C}$.}
	\Output{A list of components $C_j'\subset C$, each consisting of aligned and equally sized squares of width $2^{\ell_C-1}$. The union of all $C_j'$ contains all roots of $F$ that are contained in $C$.}
	\BlankLine
	$C'\as \emptyset$\;
	\For{each $B_i\in C$}{
		Remove $B_i$ from $C$ and subdivide $B_i$ into four equally sized sub-squares $B_{i,j}$, with $j=1,\ldots,4$, and add these to $C'$.\;
	}
	\For{each $B\in C'$}{
		\If{$\T(\Delta_B)=0$
			\atcp{\normalfont \textit{This implies that $B$ contains no root.}}
			}{
			Remove $B$ from $C'$.\label{DiscardCBisect}
		}
	}
	Compute maximal connected components $C_1', \ldots C'_\ell$ from the squares in $C'$.\label{grouping}\;
	\Return $C_1', \ldots C'_\ell$\;
	\caption{$\textsc{Bisection}$}
	\label{bisection}
\end{algorithm}

\begin{theorem}\label{thm:termination}
	The algorithm $\mathbb{C}\textsc{Isolate}$ terminates and returns a correct result. In addition, \emph{at any stage of the algorithm}, it holds that:
	\begin{itemize}
		\item[(a)] For any $(C,N_C)\in\mathcal{C}$, the connected component $C$ consists of disjoint, aligned, and equally-sized squares $B_{1},\ldots,B_{s_C}$, each of width $2^{\ell_C}$ with some $\ell_C\in\mathbb{Z}$.
		\item[(b)] For any two distinct pairs $(C_1,N_{C_1})\in \mathcal{C}$ and $(C_2,N_{C_2})\in \mathcal{C}$, the distance between $C_1$ and $C_2$ is at least $\max(2^{\ell_{C_1}},2^{\ell_{C_2}})$. In particular, the enlarged regions $C_1^+$ and $C_2^+$ are disjoint.

		\item[(c)] The union of all connected components $C$ covers all roots of $F$ contained in $\mathcal{B}$. In mathematical terms,
		      \[
		      	F(z)\neq 0\text{ for all }z\in\mathcal{B}\setminus\bigcup_{C:(C,N_C)\in\mathcal{C}}C.
		      \]

		\item[(d)] For each square $B$ produced by the algorithm that is not equal to the initial square $\mathcal{B}$, the enlarged square $2B$ contains at least one root of $F$.
		\item[(e)] Each component $C$ considered by the algorithm consists of $s_C\le 9\cdot |\MMM(C^+)|$ squares. The total number of squares in all components $C$ is at most $9$-times the number of roots contained in $2\mathcal{B}$, that is,\footnote{We will later prove that even the total number of squares produced by the algorithm in \emph{all iterations} is near-linear in the number $\MMM(2\mathcal{B})$ of roots contained in $2\mathcal{B}$.}
		      $$\sum_{C:\exists (C,N_C)\in\mathcal{C}} s_C\le 9 \cdot |\MMM(2\mathcal{B})|.$$

		\item[(f)] Let $(C, N_C)$ be a pair produced by the algorithm. The sequence of ancestors of a component $C$ produced by the algorithm is recursively defined as follows. It consists of the component $C'$ from which $C$ resulted followed by the ancestors of $C'$. We denote with $\ancstar(C)$, the first ancestor of $C$ for which the Newton Test was successful, if such exists, otherwise $\ancstar(C)=\BBB$. Then $w(C)\le\frac{2w(\ancstar(C))}{ \sqrt{N_C}}\le\frac{2w(\BBB)}{\sqrt{N_C}}$. Moreover,
		\item[(g)] $\frac{\sigma_F(2\mathcal{B})^2}{2^{17}\cdot n^2\cdot w(\mathcal{B})}\le 2^{\ell_C}\le w(\mathcal{B})$
		      and $4\le N_C\le \left(\frac{2^{9}\cdot w(\mathcal{B})}{\sigma_F(2\mathcal{B})}\right)^2$,
		      where $\sigma_F(2\mathcal{B}):=\min_{i:z_i\in 2\mathcal{B}}\sigma_F(z_i)$ is the separation of $F$ restricted to $2\mathcal{B}$.
	\end{itemize}
\end{theorem}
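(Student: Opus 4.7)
The plan is a joint induction on the iterations of $\mathbb{C}\textsc{Isolate}$ establishing the loop invariants (a)--(e), followed by a structural induction on the ancestor chain for (f), and finally a derivation of (g) and of termination from (f) combined with the separation bound $\sigma_F(2\mathcal{B})$ between roots in $2\mathcal{B}$. The base case is trivial. In the induction step, under \textsc{Bisection} of $C$: (a) holds because each $B_i\in C$ splits into four equal sub-squares of width $2^{\ell_C-1}$; (b) is preserved because the new sub-squares sit strictly inside $C$, which was separated from all other components by at least $2^{\ell_C}$; (c) holds because a sub-square $B$ is discarded only when $\T(\Delta_B)=0$, which by Lemma~\ref{softtest:success2} certifies that $\Delta_B\supseteq B$ is root-free; (d) holds for every surviving sub-square because $\T(\Delta_B)\ne 0$ forces $\tfrac{4}{3}\Delta_B$ to contain a root (otherwise $\Delta_B$ would be $(\rho_1,\rho_2)$-isolating for the empty set and $\T$ would return $0$), and $\tfrac{4}{3}\Delta_B$ has radius $w(B)$ and therefore sits inside $2B$. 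Under a successful \textsc{NewtonTest} on $C$ producing $C'$: (a) is immediate from line~\ref{newtonaddboxes}; (b) follows from $C'\subseteq C$; for (c) I would verify $\Delta'\subseteq 2\Delta_C$ from the explicit radii ($\Delta_C$ has radius $\tfrac{3}{4}w(C)$; $\Delta'$ has radius $\tfrac{1}{8}\cdot 2^{\ell_C}/N_C\le\tfrac{1}{32}w(C)$; $\Delta'\cap C\neq\emptyset$), so the $k_C$ roots of $\Delta'$ coincide with the $k_C$ roots of $2\Delta_C$ and include all roots of $C$; (d) holds for every $B_{i,j}$ kept in $C'$ because some root of the cluster lies within max-norm distance $\tfrac{1}{2}w(B_{i,j})$ of $B_{i,j}$ and hence in $2B_{i,j}$. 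Property (e) then follows from (d) by a grid-counting argument: the centers of the $B_i\in C$ lie on an axis-aligned grid of spacing $2^{\ell_C}$, so any root can lie in $2B_i$ for at most $3\times 3=9$ distinct $B_i$; the global bound uses disjointness of the $C^+$ (from (b)) together with $C^+\subseteq 2\mathcal{B}$.

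For (f) I would induct on the ancestor chain: if $C$ arises from a successful NewtonTest on $C':=\ancstar(C)$, then line~\ref{newtonaddboxes} gives $w(C)\le 2^{\ell_{C'}}/N_{C'}\le w(C')/N_{C'}$, and since $N_C=N_{C'}^2$ this is $\le w(\ancstar(C))/\sqrt{N_C}$; if $C$ arises by Bisection from $C'$ with $\ancstar(C)=\ancstar(C')$, then $w(C)\le w(C')$ and $N_C=\max(4,\sqrt{N_{C'}})\le\sqrt{N_{C'}}$ (using $N_{C'}\ge 4$ throughout), so the inductive hypothesis on $C'$ propagates. For (g), I would first establish the auxiliary invariant $w(C)\cdot N_C\le 4 w(\mathcal{B})$: it holds initially and is non-increasing under both operations (Newton maps it to $(w(C)/N_C)\cdot N_C^2=w(C)N_C$; Bisection keeps $w$ from growing while $N_{C'}\le N_C$). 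Whenever NewtonTest is called with $k_{C_{par}}\ge 2$, two roots lie inside $2\Delta_{C_{par}}$ of diameter $3w(C_{par})$, so $w(C_{par})\ge\sigma_F(2\mathcal{B})/3$; combining with the auxiliary invariant yields $N_{C_{par}}\le 12 w(\mathcal{B})/\sigma_F(2\mathcal{B})$, hence $N_C=N_{C_{par}}^2\le(2^9 w(\mathcal{B})/\sigma_F(2\mathcal{B}))^2$, and $N_C\ge 4$ is built in. For the lower bound on $2^{\ell_C}$, I would use (e) to write $w(C_{par})\le s_{C_{par}}\cdot 2^{\ell_{C_{par}}}\le 9n\cdot 2^{\ell_{C_{par}}}$, giving $2^{\ell_{C_{par}}}\ge\sigma_F(2\mathcal{B})/(27n)$; a Newton step then shrinks this to $2^{\ell_C}=2^{\ell_{C_{par}}-1}/N_{C_{par}}\ge\sigma_F(2\mathcal{B})^2/(2^{17}n^2 w(\mathcal{B}))$ after absorbing constants, while Bisection only reduces $2^\ell$ by a factor of two.

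Termination is then immediate: $\ell_C$ strictly decreases in every iteration (by $1$ under Bisection and by $1+\log N_C\ge 3$ under Newton) while staying above the positive lower bound in (g), so the process halts. Correctness follows from (c) (no root of $F$ in $\mathcal{B}$ is lost) together with the fact that a disk is added to $\mathcal{O}$ only when $\T(2\Delta_C)=1$, which by Lemma~\ref{softtest:success2} certifies that $2\Delta_C$ isolates exactly one root. The main obstacle in this plan is the verification of (c) after a successful NewtonTest: establishing $\Delta'\subseteq 2\Delta_C$ and reconciling the three independent $\T$-test results from the main loop (on $2\Delta_C$ and $4\Delta_C$) and from NewtonTest (on $\Delta'$) to conclude that the $k_C$ roots in $\Delta'$ are exactly those in $2\Delta_C$ and hence include all roots of $C$; once this containment is in hand, the remaining constants and bookkeeping for (g) are routine.
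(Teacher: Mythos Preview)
Your treatment of (a)--(e) and of (f) is essentially the paper's argument, and your auxiliary invariant $w(C)\cdot N_C\le 4\,w(\mathcal{B})$ is a clean way to obtain the upper bound on $N_C$ in (g). However, there is a genuine gap in your derivation of termination and of the lower bound on $2^{\ell_C}$ in (g): the argument is circular.

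You claim termination because $\ell_C$ strictly decreases and stays above the lower bound in (g). But your proof of that lower bound only covers components $C$ arising from a successful \textsc{NewtonTest}; for components arising from \textsc{Bisection} you merely note that $2^{\ell}$ is halved. Nothing in your argument prevents \textsc{Bisection} from being applied repeatedly (this happens whenever the if-clause in line~\ref{testbeforenewton} fails, e.g.\ because $4\Delta_C$ meets another component or $\T$ returns $-1$), in which case $2^{\ell_C}$ would drop below any fixed threshold. Thus the lower bound in (g) is not an invariant, and you cannot use it to force termination.

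The missing ingredient, which the paper supplies before deriving (g), is the statement ``$w(C)<\sigma_F(2\mathcal{B})/32$ implies $C$ is terminal''. This needs two non-trivial verifications: (i) the unique root $\xi\in C^+\subset 2\mathcal{B}$ (from (d)) has $\sigma_F(\xi)\ge\sigma_F(2\mathcal{B})$, so both $2\Delta_C$ and $4\Delta_C$ are $(\rho_1,\rho_2)$-isolating and Lemma~\ref{softtest:success2} gives $\T(2\Delta_C)=\T(4\Delta_C)=1$; and (ii) $4\Delta_C$ does not intersect any other component $C'$, which requires a case split on $2^{\ell_{C'}}$ combining (b) (the separation $\ge\max(2^{\ell_C},2^{\ell_{C'}})$) with (d) (any $2B'\subset (C')^+$ contains a root distinct from $\xi$). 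Only once this is established do the lower bound on $2^{\ell_C}$ and the upper bound on $N_C$ follow, by applying your invariant to the \emph{non-terminal} parent. You have identified the wrong step as the main obstacle: the containment $\Delta'\subset 2\Delta_C$ in the NewtonTest case of (c) is indeed a routine radius check, whereas the ``small width $\Rightarrow$ terminal'' argument is where the real work lies.
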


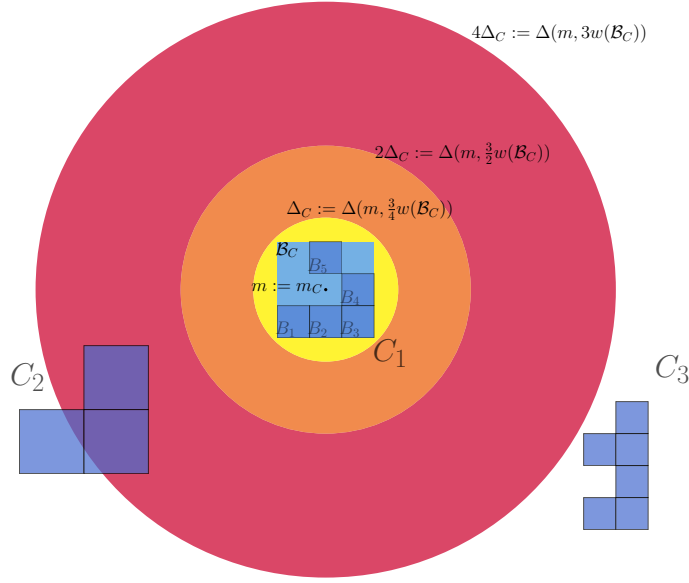
\begin{figure}[t!]
	\begin{center}
		\resizebox{0.75\textwidth}{!}{

\begin{tikzpicture}
\tikzstyle{every node}=[font=\Large,scale=4.5]
\begin{scope}
  \fill[alizarin,opacity=0.8] (6,6) circle (36) node[black,opacity=1] (n1) at (35,38) {$4\Delta_C:=\Delta(m,3w(\BBB_C))$} node[black,scale=2,opacity=1] (n2) at (30,15) {};
  \fill[white] (6,6) circle (18);
  \fill[orange,opacity=0.7] (6,6) circle (18) node[black,opacity=1] at (23,23) {$2\Delta_C:=\Delta(m,\frac{3}{2}w(\BBB_C))$};
  \fill[white] (6,6) circle (9);
  \fill[yellow,opacity=0.8] (6,6) circle (9) node[black,opacity=1] at (11.5,16) {$\Delta_C:=\Delta(m,\frac{3}{4}w(\BBB_C))$};

  \fill[color=mybrightblue] (0,0) -- (12,0) -- (12,12) -- (0,12) -- (0,0) node[black] at (1.2,11) {$\BBB_C$};

  \draw[fill=hanblue,opacity=0.5] (0,0) -- (4,0) -- (4,4) -- (0,4) -- (0,0) node at (1,1) {$B_1$};
  \draw[fill=hanblue,opacity=0.5] (4,0) -- (8,0) -- (8,4) -- (4,4) -- (4,0) node at (5,1) {$B_2$};
  \draw[fill=hanblue,opacity=0.5] (8,0) -- (12,0) -- (12,4) -- (8,4) -- (8,0) node at (9,1) {$B_3$};
  \draw[fill=hanblue,opacity=0.5] (8,4) -- (12,4) -- (12,8) -- (8,8) -- (8,4)node at (9,5) {$B_4$};
  \draw[fill=hanblue,opacity=0.5] (4,8) -- (8,8) -- (8,12) -- (4,12) -- (4,8)node at (5,9) {$B_5$};

\draw[opacity=0.7] node at (14,-2) {\Huge $C_1$};

  \fill (6,6) circle (2mm) node at (1.2,6.2) {$m:=m_C$};

	\begin{scope}[shift={(-32,-17)}]
	\draw[fill=hanblue,opacity=0.7] (0,0) -- (8,0) -- (8,8) -- (0,8) -- (0,0) node at (5,1) {};
  \draw[fill=hanblue,opacity=0.7] (8,0) -- (16,0) -- (16,8) -- (8,8) -- (8,0) node at (9,1) {};
  \draw[fill=hanblue,opacity=0.7] (8,8) -- (16,8) -- (16,16) -- (8,16) -- (8,8) node at (1,12) {\Huge $C_2$};
\end{scope}

\begin{scope}[shift={(34,-24)}]
	\draw[fill=hanblue,opacity=0.7] (4,0) -- (8,0) -- (8,4) -- (4,4) -- (4,0) node at (5,1) {};
  \draw[fill=hanblue,opacity=0.7] (8,0) -- (12,0) -- (12,4) -- (8,4) -- (8,0) node at (9,1) {};
  \draw[fill=hanblue,opacity=0.7] (8,4) -- (12,4) -- (12,8) -- (8,8) -- (8,4)node at (9,5) {};
\end{scope}

\begin{scope}[shift={(34,-16)}]
  \draw[fill=hanblue,opacity=0.7] (4,0) -- (8,0) -- (8,4) -- (4,4) -- (4,0) node at (5,1) {};
  \draw[fill=hanblue,opacity=0.7] (8,0) -- (12,0) -- (12,4) -- (8,4) -- (8,0) node at (9,1) {};
  \draw[fill=hanblue,opacity=0.7] (8,4) -- (12,4) -- (12,8) -- (8,8) -- (8,4)node at (15,12) {\Huge $C_3$};
\end{scope}

\end{scope}

\end{tikzpicture}
		}
	\end{center}
	\caption{
		A component $C_1:=C$ consisting of 5 squares $B_1,\ldots B_5$, the enclosing square $\BBB_C$ with center $m:=m_C$ and the disks $\Delta_C$, $2\Delta_C$ and $4\Delta_C$. The disk $4\Delta_C$ intersects the component $C_2$ but does not intersect the component $C_3$.
	}
	\label{fig:yyy}
\end{figure}

\begin{proof}
	Part (a) follows almost immediately via induction. Namely, a component $C$ consisting of squares of size $2^{\ell_C}$ is either replaced by a single connected component consisting of (at most $4$) squares of width $2^{\ell_C-1}/N_C$ in line~\ref{newtonadd} after \textsc{NewtonTest} was called, or it is replaced by a set of connected components $C'\subset C$, each consisting of squares of size $2^{\ell_C-1}$ in line~\ref{bisectionadd} after \textsc{Bisection} was called.

	For (b), we can also use induction on the number of iterations. Suppose first that a component $C$ is obtained from processing a component $D$ in line~\ref{bisectionadd}. If $C$ is the only connected component obtained from $D$, then, by the induction hypotheses, it follows that the distance to all other components $C'$, with $C'\cap D=\emptyset$, is at least $\max(2^{\ell_D},2^{\ell_{C'}})\ge \max(2^{\ell_C},2^{\ell_{C'}})$. If $D$ splits into several components $C_1,\ldots,C_s$, with $s>1$, their distance to any component $C'$, with $C'\cap D=\emptyset$, is at least $\max(2^{\ell_D},2^{\ell_{C'}})\ge \max(2^{\ell_{C_i}},2^{\ell_{C'}})$ for all $i$. In addition, the pairwise distance of two disjoint components $C_i$ and $C_j$ is at least $2^{\ell_C-1}=2^{\ell_{C_i}}$ for all $i$. Finally, suppose that, in line~\ref{newtonadd}, we replace a component $D$ by a single component $C$. In this case, $C\subset D$ and $C$ consists of squares of width $2^{\ell-1}/N_C$. Hence, the distance from $C$ to any other component $C'$ is also lower bounded by $\max(2^{\ell_C},2^{\ell_{C'}})$.

	\begin{figure}[t!]
		\begin{center}
			\resizebox{0.6\textwidth}{!}{

\begin{tikzpicture}[x=1cm,y=1cm, font=\boldmath]
\tikzstyle{every node}=[font=\Huge,scale=.5]
\begin{scope}


  \fill[color=mybrighterblue,opacity=0.8] (4,0) -- (12,0) -- (12,8) -- (4,8) -- (4,0) node[black] at (5,7) {$\BBB_C$};

  \draw[color=darkgray,fill=hanblue, opacity=0.7] (4,0) -- (8,0) -- (8,4) -- (4,4) -- (4,0) node at (5,1) {\textcolor{black}{$B_1$}};
  \draw[color=darkgray,fill=hanblue, opacity=0.7] (8,0) -- (12,0) -- (12,4) -- (8,4) -- (8,0) node at (9,1) {\textcolor{black}{$B_2$}};
  \draw[color=darkgray,fill=hanblue, opacity=0.7] (8,4) -- (12,4) -- (12,8) -- (8,8) -- (8,4)node at (9,7) {\textcolor{black}{$B_3$}};

  \begin{scope}[shift={(2,3.5)}]
  \draw[color=darkgreen,fill=grannysmithapple] (8,0) -- (8.5,0) -- (8.5,0.5) -- (8,0.5) -- (8,0) node at (9,1) {\textcolor{black}{$C'$}};
  \end{scope}

  \begin{scope}[shift={(1.5,4)}]
  \draw[color=darkgreen,fill=grannysmithapple] (8,0) -- (8.5,0) -- (8.5,0.5) -- (8,0.5) -- (8,0) node at (8.5,1) {};
  \end{scope}
  \begin{scope}[shift={(2,4)}]
  \draw[color=darkgreen,fill=grannysmithapple] (8,0) -- (8.5,0) -- (8.5,0.5) -- (8,0.5) -- (8,0) node at (8.5,1) {};
  \end{scope}

  \fill (8,4) circle (0.5mm) node[above left] {$m$};
  \fill[alizarin] (10.1,4.1) circle (1/8) node at (9.5,3.7) {$\Delta'$};

\end{scope}

\end{tikzpicture}
			}
		\end{center}
		\caption{
			The \textsc{NewtonTest}: If $\T(\Delta')=k_C$, with $\Delta':=\Delta(\tilde{x}_C',\frac{2^{\ell_{C}-3}}{N_C})$, then $\Delta'$ contains exactly $k_C$ roots of $F$. Since $\T(2\Delta_C)=k_C$ and $C^+\subset 2\Delta_C$, it follows that $\Delta'$ contains all roots contained in $C^+$.
			The sub-squares $B_{i,j}$ of width $2^{\ell_C-1}/N_C$ that intersect $\Delta'$ yield a connected component $C'$ of width at most $2^{\ell_C}/N_C\le w(C)/N_C$. In addition, all roots that are contained in $C$ are also contained in $C'$. Further notice that if $\tilde{x}_C'$ is contained in $C$, then $\Delta'$ intersects at most four squares $B_{i,j}$. Otherwise, it intersects at most three squares. In each case, the squares are connected with each other, and the corresponding connected component $C'$ has width at most $2^{\ell_C}/N_C\le w(C)/N_C$.}
		\label{fig:xxx}
	\end{figure}
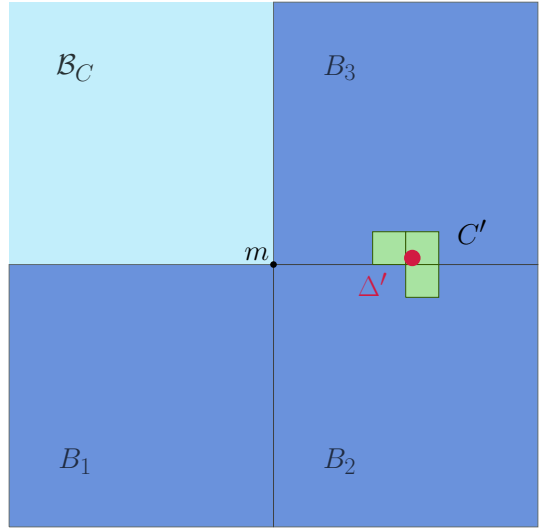

	For (c), notice that in line~\ref{DiscardCBisect} of \textsc{Bisection}, we discard a square $B$ only if the $\T(\Delta_B)=0$. Hence, in this case, $B$ contains no root of $F$. It remains to show that each root of $F$ contained in $C$ is also contained in $C'$, where $C'\subset C$ is a connected component as produced in line~\ref{newtonadd} after \textsc{NewtonTest} was called.
	If $\T(\Delta')=k_C$, then $\Delta'$ contains $k_C$ roots; see Lemma~\ref{softtest:success2}. Hence, since $\Delta'$ is contained in $2\Delta_C$, and since $2\Delta_C$ also contains $k_C$ roots (as $\T(2\Delta_C)=k_C$ holds), it follows that $\Delta'$ contains all roots that are contained in $C$.
	The disk $\Delta'$ intersects no other component $C'\neq C$ as the distance from $C$ to $C'$ is larger than $2^{\ell_C}$, and thus, by induction, we conclude that $(\Delta'\cap\mathcal{B})\setminus C$ contains no root of $F$. This shows that $C'$ already contains all roots contained in $C$.

	We can now prove part (d) and part (e).
	Any square $B\neq\mathcal{B}$ that is considered by the algorithm either results from the \textsc{Bisection} or from the \textsc{NewtonTest} routine. If a square $B$ results from the \textsc{Bisection} routine, then the disk $\Delta_B=\Delta(m_B,w(B))$ contains at least one root of $F$, and thus also $2B$ contains at least one root.  If a square $B$ results from the \textsc{NewtonTest} routine, then $2B$ even contains two roots or more.
	Namely, in this case, $\T(\Delta')=k_C$ holds for the disk $\Delta'=\Delta(m',r')$, with $r'=\frac{1}{4}w(B)$ and $k_C>1$, and thus $\Delta'$ contains $k_C$ roots. Since $2B$ contains the latter disk, $2B$ must contain at least $k_C$ roots. This shows (d). From (d), we immediately conclude that, for each component $C\neq\mathcal{B}$ produced by the algorithm, the enlarged component $C^+$ contains at least one root of $F$.
	In addition, since $C^+$ is contained in $2\mathcal{B}$, each of these roots must be contained in $2\mathcal{B}$. The first part in (e) now follows from the fact that, for a fixed root of $F$, there can be at most $9$ different squares $B$ of the same size such that $2B$ contains this root. From part (b), it follows that, for any two distinct components $C_1$ and $C_2$, the enlarged components $C_1^+$ and $C_2^+$ do not intersect, and thus the total number of squares in all components is upper bounded by $9\cdot |\MMM(2\mathcal{B})|$, which proves the second part in (e).

	For (f), we may assume that $N_C>4$ as, otherwise, the inequality becomes trivial.
	Denote with $C_1,\ldots,C_{s}$ the sequence of ancestors of $C$, with $C_1:=\ancstar(C)$, $C_s=C$, and $C_i\supset C_{i+1}$. By definition of $\ancstar(C)$, it holds that $w(C_2)\le \tfrac{w(C_1)}{N_{C_1}} =\tfrac{w(C_1)}{\sqrt{N_{C_2}}}$, since the step from $C_1$ to $C_2$ is a quadratic step. It follows that
	\[
		w(C)
		= w(C_s)
		\le w(C_{s-1})
		\le w(C_2)
		\le \tfrac{w(C_1)}{N_{C_1}}
		= \tfrac{w(C_1)}{\sqrt{N_{C_2}}}
		\le \tfrac{w(C_1)}{\sqrt{N_{C_s}}}
		= \tfrac{w(\ancstar(C))}{\sqrt{N_{C}}},
	\]
	since $N_{C_i}= \sqrt{N_{C_{i-1}}}$ for $i=2,\ldots, s$.

	\begin{figure}[t!]
		\centering{
			\resizebox{0.6\textwidth}{!}{

\begin{tikzpicture}[x=1cm,y=1cm]
\tikzstyle{every node}=[font=\Huge,scale=1.1]
\begin{scope}
  \fill[darkyellow,opacity=0.2] (6,6) circle (9) node[black,opacity=1] at (12,15) {$\Delta_C:=\Delta(m,\frac{3}{4}w(\BBB_C))$};

  \fill[color=brightgray] (0,0) -- (12,0) -- (12,12) -- (0,12) -- (0,0) node[black] at (1,11) {$\BBB_C$};

  \draw[color=gray,fill=hanblue] (0,0) -- (4,0) -- (4,4) -- (0,4) -- (0,0) node at (1,1) {};
  \draw[color=gray,fill=hanblue] (4,0) -- (8,0) -- (8,4) -- (4,4) -- (4,0) node at (5,1) {};
  \draw[color=gray,fill=hanblue] (8,0) -- (12,0) -- (12,4) -- (8,4) -- (8,0) node at (9,1) {};
  \draw[color=gray,fill=hanblue] (8,4) -- (12,4) -- (12,8) -- (8,8) -- (8,4)node at (9,7) {};
  \draw[color=gray,fill=hanblue] (4,8) -- (8,8) -- (8,12) -- (4,12) -- (4,8)node at (5,9) {};


  \draw[color=darkgray,fill=grannysmithapple] (0,0) -- (2,0) -- (2,2) -- (0,2) -- (0,0)   node at (9,1) {};
  \begin{scope}[shift={(2,0)}]
      \draw[color=darkgray] (0,0) -- (2,0) -- (2,2) -- (0,2) -- (0,0)   node at (9,1) {};
  \end{scope}
  \begin{scope}[shift={(2,2)}]
      \draw[color=darkgray] (0,0) -- (2,0) -- (2,2) -- (0,2) -- (0,0)   node at (9,1) {};
  \end{scope}
  \begin{scope}[shift={(0,2)}]
      \draw[color=darkgray] (0,0) -- (2,0) -- (2,2) -- (0,2) -- (0,0)   node at (9,1) {};
  \end{scope}

  \begin{scope}[shift={(4,0)}]
      \draw[color=darkgray] (0,0) -- (2,0) -- (2,2) -- (0,2) -- (0,0)   node at (9,1) {};
      \begin{scope}[shift={(2,0)}]
          \draw[color=darkgray,fill=grannysmithapple] (0,0) -- (2,0) -- (2,2) -- (0,2) -- (0,0)       node at (9,1) {};
      \end{scope}
      \begin{scope}[shift={(2,2)}]
          \draw[color=darkgray,fill=grannysmithapple] (0,0) -- (2,0) -- (2,2) -- (0,2) -- (0,0)       node at (9,1) {};
      \end{scope}
      \begin{scope}[shift={(0,2)}]
          \draw[color=darkgray,fill=grannysmithapple] (0,0) -- (2,0) -- (2,2) -- (0,2) -- (0,0)       node at (9,1) {};
      \end{scope}
  \end{scope}

  \begin{scope}[shift={(8,0)}]
      \draw[color=darkgray] (0,0) -- (2,0) -- (2,2) -- (0,2) -- (0,0)   node at (9,1) {};
      \begin{scope}[shift={(2,0)}]
          \draw[color=darkgray,fill=grannysmithapple] (0,0) -- (2,0) -- (2,2) -- (0,2) -- (0,0)       node at (9,1) {};
      \end{scope}
      \begin{scope}[shift={(2,2)}]
          \draw[color=darkgray,fill=grannysmithapple] (0,0) -- (2,0) -- (2,2) -- (0,2) -- (0,0)       node at (9,1) {};
      \end{scope}
      \begin{scope}[shift={(0,2)}]
          \draw[color=darkgray,fill=grannysmithapple] (0,0) -- (2,0) -- (2,2) -- (0,2) -- (0,0)       node at (9,1) {};
      \end{scope}
  \end{scope}

  \begin{scope}[shift={(8,4)}]
      \draw[color=darkgray,fill=grannysmithapple] (0,0) -- (2,0) -- (2,2) -- (0,2) -- (0,0)   node at (9,1) {};
      \begin{scope}[shift={(2,0)}]
          \draw[color=darkgray] (0,0) -- (2,0) -- (2,2) -- (0,2) -- (0,0)       node at (9,1) {};
      \end{scope}
      \begin{scope}[shift={(2,2)}]
          \draw[color=darkgray] (0,0) -- (2,0) -- (2,2) -- (0,2) -- (0,0)       node at (9,1) {};
      \end{scope}
      \begin{scope}[shift={(0,2)}]
          \draw[color=darkgray] (0,0) -- (2,0) -- (2,2) -- (0,2) -- (0,0)       node at (9,1) {};
      \end{scope}
  \end{scope}

  \begin{scope}[shift={(4,8)}]
      \draw[color=darkgray,fill=grannysmithapple] (0,0) -- (2,0) -- (2,2) -- (0,2) -- (0,0)   node at (9,1) {};
      \begin{scope}[shift={(2,0)}]
          \draw[color=darkgray] (0,0) -- (2,0) -- (2,2) -- (0,2) -- (0,0)       node at (9,1) {};
      \end{scope}
      \begin{scope}[shift={(2,2)}]
          \draw[color=darkgray,fill=grannysmithapple] (0,0) -- (2,0) -- (2,2) -- (0,2) -- (0,0)       node at (9,1) {};
      \end{scope}
      \begin{scope}[shift={(0,2)}]
          \draw[color=darkgray,fill=grannysmithapple] (0,0) -- (2,0) -- (2,2) -- (0,2) -- (0,0)       node at (9,1) {};
      \end{scope}
  \end{scope}
\end{scope}

\end{tikzpicture}
			}
		}
		\caption{
			The \textsc{Bisection} routine: The green (brighter) sub-squares are all squares $B$ for which $\T(\Delta_B)\neq 0$. They are grouped together into three maximal connected components, which contain all roots contained in $C$. All other sub-squares are discarded.
		}
		\label{fig:zzz}
	\end{figure}
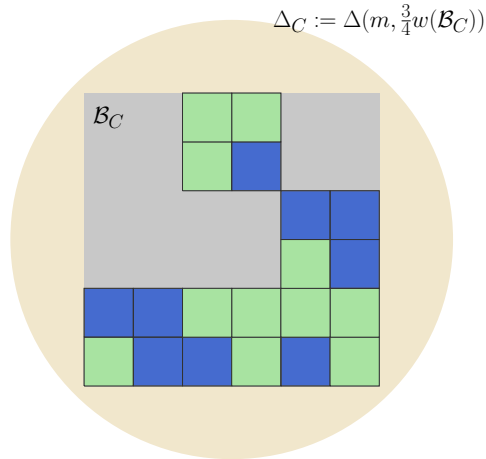

	We can now show that the algorithm terminates; the inequalities in (g) will then follow from the proof of termination: Suppose that the algorithm produces a sequence
	$C_1,C_2,\ldots,C_s$ of connected components, with $s\ge\log n+6$ and $C_1\supset C_2\supset\cdots\supset C_s$. If, for at least one index $i\in\{1,\ldots,s-1\}$, $C_{i+1}$ is obtained from $C_{i}$ via a quadratic step, then $w(C_{i+1})\le w(C_i)/N_{C_{i}}\le w(C_i)/4$. Hence, in this case, we also have $w(C_s)\le \frac{w(C_1)}{4}$.
	Now, suppose that each $C_{i+1}$ is obtained from $C_i$ via a linear step, then each square in $C_i$
	has size $2^{\ell_{C_1}-i+1}$, and thus $w(C_s)\le 9n\cdot 2^{\ell_{C_1}-s+1}\le \frac{w(C_1)}{2}$. This shows that, after
	at most $\log n+6$ iterations, the width of each connected component is halved. Hence, in order to prove
	termination of the algorithm, it suffices to prove that each component $C$ of small enough width is terminal,
	that is $C$ is replaced by an isolating disk in line~\ref{addisolating} or discarded in \textsc{NewtonTest} or \textsc{Bisection}. The following argument shows that each component $C$ of width smaller
	than $w:=\frac{1}{32}\cdot\sigma_F(2\mathcal{B})$ that is not discarded is replaced by an isolating disk. We have already shown that $C^+$
	must contain a root $\xi$ of $F$, and thus we have $|m_C-\xi|<2w(C)<\sigma_F(\xi)/16$ and $r_C<\sigma_F(\xi)/16$. We conclude that the disks $2\Delta_C$ and $\Delta(m_C,8r_C)$ are both
	isolating for $\xi$. Then, Lemma~\ref{softtest:success2} guarantees that $\T(2\Delta_C)=1$ and $\T(4\Delta_C)=1$ hold. Hence, if $4\Delta_C$ intersects no other component $C'\neq C$, then the
	algorithm replaces $C$ by the isolating disk $2\Delta_C$ in line~\ref{addisolating}, because the if-clause in line~\ref{testbeforenewton} succeeds with $k_C=1$. It remains to show that the latter
	assumption is always fulfilled. Namely, suppose that $4\Delta_C$ intersects a component $C'\neq C$, and let
	$B$ and $B'$ be arbitrary squares contained in $C$ and $C'$, respectively. Then, the enlarged squares $2B$ and $2B'$ contain roots $\xi$ and $\xi'$, respectively, and $\xi$ and $\xi'$ must be distinct as $C^+$ and $(C')^+$ are disjoint. Hence, the distance between $B$ and $B'$, and thus also the distance $\delta$ between $C$ and $C'$, must be larger than
	$\sigma_F(2\mathcal{B})-2^{\ell_C}-2^{\ell_{C'}}=32w-2^{\ell_C}-2^{\ell_{C'}}\ge 31 w-2^{\ell_{C'}}$. Hence, if $2^{\ell_{C'}}\le 25w$, then $4\Delta_C\subset\Delta(m_C,6w)$ does not intersect $C'$.
	Vice versa, if $2^{\ell_{C'}}>25w$, then the distance between $C$ and $C'$ is at least $\max(2^{\ell_C},2^{\ell_{C'}})> 25w$, and thus $4\Delta_C$ does not intersect $C'$ as well. Notice that (g) now follows almost directly from the above considerations. Indeed, let $C\neq \mathcal{B}$ be an arbitrary component $C$ and let $D$ be any component that contains $C$.
	Since $D$ is not terminal, we conclude that $w(D)\ge w$, and thus $N_D\le \frac{4w(\mathcal{B})}{w}$ according to (f). Since $N_C$ is smaller than or equal to the square of the maximum of all values $N_D$, the second inequality in (g) follows. The first inequality follows from the fact that $2^{\ell_{C}}\ge \min_{D:C\subset D}\frac{2^{\ell_D-1}}{N_D}\ge \frac{w}{9n\cdot\max_{D:C\subset D} N_D}$.

	For correctness, we remark that each disk $D$ returned by the algorithm is actually isolating for a root of $F$
	contained in $2\mathcal{B}$ and that $2D$ also isolates this root. Namely, for each component $C$ produced by
	the algorithm, the enlarged component $C^+$ contains at least one root. Now, if the if-clause in line~\ref{testbeforenewton} succeeds on $C$ with $k_C=1$, it holds that $\T(2\Delta_C)=1$, and thus the disk $2\Delta_C$ contains exactly one root $\xi$. Hence, since $\Delta_C$ contains $C^+$, this root must be
	contained in $C^+$. In addition, if also $\T(4\Delta_C)=1$ holds, then the disk $4\Delta_C$ isolates $\xi$ as well.
	Finally, it remains to show that the algorithm returns an isolating disk for each root $\xi$ that is contained
	in $\mathcal{B}$. From (a) and (c), we conclude that there is a unique maximal sequence $\mathcal{S}=C_1,C_2,\ldots,C_s$ of connected
	components, with $C_1\supset C_2\supset\cdots\supset C_s$, such that each $C_i$ contains $\xi$.
	Now, when processing $C_s$, $C_s$ cannot be replaced by other connected components $C'\subset C_s$ as one of these components would contain $\xi$, and this would contradict the assumption that the sequence $\mathcal{S}$ is maximal. Since $C_s$ contains $\xi$, it cannot be discarded in \textsc{Bisection} or \textsc{NewtonTest}, hence $C_s$ is replaced by an isolating disk for $\xi$ in line~\ref{addisolating}.
\end{proof}

\noindent\emph{Remarks.} We remark that our requirement on the input polynomial $F$ to have only simple roots in
$2\mathcal{B}$ is only needed for the termination of the algorithm. Running the algorithm on an arbitrary polynomial (possibly having multiple roots) yields isolating disks for the simple roots as well as arbitrarily small connected components converging against the multiple roots of $F$ in $\mathcal{B}$. Namely, if $\mathcal{B}$ is not discarded in the first iteration, then the
enlargement $C^+$ of each component $C$ contains at least one root. Since $C$ consists of at most $9n$ squares, each of size $2^{\ell_C}$,
it holds that each point in $C$ approximates a root of $F$ to an error of less than $n\cdot 2^{\ell_C+4}$. In addition, the union of all components covers all roots contained in
$\mathcal{B}$, and thus our algorithm yields $L$-bit approximations of all roots in $\mathcal{B}$ if we iterate until $\ell_C\le-4-\log n-L$ for all components $C$. In the special situation, where
we run the algorithm on an input square that is known to contain all roots and if, in addition, the number $k$ of distinct
roots of $F$ is given as input, our algorithm can be used to return isolating regions for all roots. Namely, in this situation, we may
proceed until the total number of connected components $C$ equals $k$. Then, each of the enlarged components $C^+$
isolates a root of $F$. The latter problem is of special interest in the context of computing a cylindrical algebraic decomposition, where we have to isolate the roots of a not necessarily square-free polynomial with algebraic coefficients. In this case, it might be easier to first compute $k$ via a symbolic pre-computation and to consider sufficiently good approximations of the initial polynomial instead of computing approximations of the square-free part of $F$. A corresponding approach based on approximate polynomial factorization has been presented in~\cite{MSW-rootfinding2013}, and we refer the reader to this work for more details and for a motivation of the problem.


\section{Complexity Analysis}\label{sec:complexity}

We split the analysis of our algorithm into two parts. In the first part, we focus on the
number of iterations that are needed to isolate the roots of $F(x)$ that are contained in a given square $\mathcal{B}$. We will see that this number is near-linear\footnote{More precisely, it is linear in $|\MMM(2\mathcal{B})|$ up to a factor that is polynomially bounded in $\log n$, $\log\LOG(w(\mathcal{B}))$, and $\log\LOG(\sigma_F(2\mathcal{B})^{-1})$.
	If $2\mathcal{B}$ contains no root, then there is only one iteration.} in $|\MMM(2\mathcal{B})|$, the number of roots contained in the enlarged square $2\mathcal{B}$. We further remark that, for any fixed non-negative constant $\epsilon$, the total number of iterations is near-linear in $|\MMM((1+\epsilon)\cdot\mathcal{B})|$; however, for the sake of simplifying analysis, we only provide details for the special case $\epsilon=1$.
Hence, we conclude that our algorithms performs near-optimal with respect to the number of subdivision steps if the input square $\mathcal{B}$ has the property that each root contained in $(1+\epsilon)\cdot \mathcal{B}$ is also contained in $\mathcal{B}$; in particular, this is trivially fulfilled if $\mathcal{B}$ is chosen large enough to contain all roots of $F$.

In the second part of our analysis, we give bounds on the number of bit operations that are needed to process a component $C$. This eventually yields a bound on the overall
bit complexity that is stated in terms of the degree of $F$, the absolute values and the separations of the roots in $\MMM(2\mathcal{B})$, and the absolute value of the derivative $F'$ at these roots. For the special case, where our algorithm is used to isolate all roots of a polynomial of degree $n$ with integer coefficients of bit size less than $\tau$, the bound on the bit complexity simplifies to $\tilde{O}(n^3+n^2\tau)$.

\subsection{Size of the Subdivision Tree}\label{subsec:treesize}

We consider the subdivision tree $\mathcal{T}_{\mathcal{B}}$, or simply $\mathcal{T}$,
induced by our algorithm, where $\mathcal{B}$ is the initial square/component. More specifically, the nodes of the (undirected) graph $\mathcal{T}$ are the pairs
$(C,N_C)\in \mathcal{C}$ produced by the algorithm, and two nodes $(C,N_C)$ and $(C',N_{C'})$
are connected via an edge if and only if $C\subset C'$ (or $C'\subset C$) and there exists no other component $C''$ with $C\subset C''\subset C'$ ($C'\subset C''\subset C$). In the first
case, we say that $(C,N_C)$ is a child of $(C',N_{C'})$, whereas, in the second case, $(C,N_C)$ is a parent of
$(C',N_{C'})$. For brevity, we usually omit the integer $N_C$, and just refer to $C$ as the nodes of $\mathcal{T}$. Notice that, according to Theorem~\ref{thm:termination}, the so obtained graph is indeed a tree rooted at $\mathcal{B}$. A node $C$ is called \emph{terminal} if and only if it has no children. We further use the following definition to refer to some special nodes:

\begin{definition}\label{def:splitting}
	A node $(C,N_C)\in\mathcal{T}$ is called \emph{special}, if one of the following conditions is fulfilled:
	\begin{itemize}
		\item The node $(C,N_C)$ is terminal.
		\item The node $(C,N_C)$ is the root of $\mathcal{T}$, that is, $(C,N_C)=(\mathcal{B},4)$.
		\item The node $(C,N_C)$ is the last node for which \textsc{Bisection} is called in the preprocessing phase of the algorithm. We call this node the \emph{base} of $\mathcal{T}$. Notice that the first part of the tree consists of a unique path connecting the root and the base of the tree.
		\item For each child $D$ of $C$, it holds that $\MMM(D^+)\neq \MMM(C^+)$.
	\end{itemize}
\end{definition}

Roughly speaking, except for the root and the base of $\mathcal{T}$, special nodes either isolate a root of $F$ or they are split into two or more disjoint clusters each containing roots of $F$. More precisely, from Theorem~\ref{thm:termination}, we conclude that, for any two distinct nodes $C,D\in\mathcal{T}$, the enlarged regions $\MMM(C^+)$ and $\MMM(D^+)$ are
either disjoint or one of the nodes is an ancestor of the other one. In the latter case, we have $C^+\subset D^+$ or $D^+\subset C^+$. Since, for any two
children $D_1$ and $D_2$ of a node $C$, the enlarged regions $D_1^+$ and $D_2^+$ are
disjoint, we have $\sum_{i=1}^{k} \MMM(D_i^+)\le \MMM(C^+)$, where $D_1$ to
$D_k$ are the children of $C$. Hence, since each $D_i^+$ contains at least one root, the fourth
condition in Definition~\ref{def:splitting} is violated if and only if $C$ has exactly one
child $D$ and $\MMM(C^+)=\MMM(D^+)$. The number of special nodes is at most $2\cdot(1+|\MMM(2\mathcal{B})|)$ as there is one root and one base, at most $|\MMM(2\mathcal{B})|$ terminal nodes $C$ with $C\neq\mathcal{B}$, and each occurrence of a special node, which fulfills the fourth condition, yields a reduction of the non-negative number $\sum_{C} (|\MMM(C^+)|-1)$ by at least one.
The subdivision tree $\mathcal{T}$ now decomposes into special nodes and sequences of non-special nodes $C_1,\ldots,C_s$, with $C_1\supset C_2\supset\cdots\supset C_s$, that connect two consecutive special nodes. The remainder of this section is dedicated to the proof
that the length $s$ of such a sequence is bounded by some value $s_{\max}$ of size
\begin{align}\label{def:smax}
	s_{\max} & =O\left(\log n + \log\LOG(w(\mathcal{B}) + \log\LOG(\sigma_F(2\mathcal{B})^{-1}) \right)             \\ \nonumber
	         & =O\left(\log \left(n\cdot \LOG(w(\mathcal{B}))\cdot \LOG(\sigma_F(2\mathcal{B})^{-1})\right)\right).
\end{align}

For the proof, we need the following lemma, which provides sufficient conditions for the success of the \textsc{NewtonTest}.

\begin{lemma}[Success of \textsc{NewtonTest}]
	\label{newtonsucceeds}
	Let $C=\{B_1,\ldots,B_{s_C}\}$ be a non-terminal component with $\mathcal{B}\setminus C\neq \emptyset$, let $\mathcal{B}_C$ be the corresponding enclosing square of width $w(C)$ and center $m=m_C$, and let $\Delta:=\Delta_C=\Delta(m,r)$, with $r:=\frac{3}{4}w(C)$, be the corresponding enclosing disk. Let $z_1,\ldots,z_{k}$ be the roots contained in the enlarged component $C^+$, and suppose that all these roots are contained in a disk $\Delta'':=\Delta(m'',r'')$ of radius $r''=2^{-20-\log n} \frac{r}{N_C}$. In addition, assume that the disk $\Delta(m,2^{2\log n +20} N_C r)$ contains none of the roots $z_{k+1},\ldots,z_{n}$. Then, the algorithm $\mathbb{C}\textsc{Isolate}$ performs a quadratic step, that is, $C$ is replaced by a single component $C'$ of width $w(C')\le \frac{w(C)}{N_C}$.
\end{lemma}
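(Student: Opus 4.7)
The plan is to verify that, under the stated hypotheses, every conditional check in the main loop of $\mathbb{C}\textsc{Isolate}$ and in \textsc{NewtonTest} passes on input $C$, forcing the algorithm through line~\ref{newtonadd}. The argument splits naturally into three steps.

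First I would show that the if-clause in line~\ref{testbeforenewton} succeeds with $k_C=k$. The cluster $\{z_1,\ldots,z_k\}$ lies in $\Delta''\subset C^+\subset 2\Delta_C$ and its spread $r''$ is vastly smaller than $r$, while each $z_j$ with $j>k$ satisfies $|z_j-m|\ge 2^{2\log n+20}N_Cr\gg \rho_2\cdot 4r$. Hence both $2\Delta_C$ and $4\Delta_C$ are $(\rho_1,\rho_2)$-isolating for the cluster, and Lemma~\ref{softtest:success2}(a) yields $\T(2\Delta_C)=\T(4\Delta_C)=k$. The same far-root separation, combined with Theorem~\ref{thm:termination}(b),(d), rules out any other component $C'\in\mathcal{C}$ meeting $4\Delta_C$: such a $C'$ would enclose (via some $2B'$) one of the far roots, contradicting the hypothesis.

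The heart of the proof is a quantitative analysis of the Schr\"oder--Newton iterate $x_C'=x_C-k_C F(x_C)/F'(x_C)$. I would use the logarithmic derivative $F'/F=\sum_i 1/(x-z_i)$ to split the sum into the cluster contribution $S_1:=\sum_{i\le k}1/(x_C-z_i)$ and the far contribution $S_2:=\sum_{j>k}1/(x_C-z_j)$. Expanding $1/(x_C-z_i)=1/((x_C-m'')(1-(z_i-m'')/(x_C-m'')))$ in the tiny quantity $r''/|x_C-m''|$ gives $S_1=(k/(x_C-m''))(1+\eta_1)$ with $|\eta_1|=O(kr''/|x_C-m''|)$, while $|S_2|\le n/(2^{2\log n+19}N_Cr)$ by the far-root separation. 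A lower bound $|x_C-m''|\ge\Omega(2^{\ell_C})$ follows from a short geometric argument using $x_C$ at distance $2^{\ell_C-1}$ from $C$ and $m''\in C^+$. Combining these estimates with the explicit choice of exponents $2^{-20-\log n}$ in $r''$ and $2^{2\log n+20}$ in the far-root radius, a direct calculation yields $|x_C'-m''|\ll 2^{\ell_C}/N_C$; adding the rounding error from~\eqref{def:newton} gives $|\tilde x_C'-m''|\le 2^{-10}\cdot 2^{\ell_C}/N_C$, say.

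With this estimate the remaining checks in \textsc{NewtonTest} become routine. The soft predicate at line~\ref{softcallinnewton} cannot return False, since $|F(x_C)|/|F'(x_C)|=|x_C-m''|(1+O(1))/k$ is at most a small multiple of $r(C)$. The disk $\Delta'=\Delta(\tilde x_C',2^{\ell_C}/(8N_C))$ meets $C$ because $\tilde x_C'$ is within a negligible fraction of $2^{\ell_C-1}$ of a point $m''\in C^+$, and $\Delta'$ is $(\rho_1,\rho_2)$-isolating for the cluster: $|\tilde x_C'-m''|+r''$ sits well below $\rho_1\cdot 2^{\ell_C}/(8N_C)$, while every non-cluster root is at distance $\gg 2^{\ell_C}/N_C$ from $\tilde x_C'$. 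Lemma~\ref{softtest:success2}(a) then gives $\T(\Delta')=k=k_C$, so line~\ref{newtonaddboxes} replaces $C$ by a single connected component $C'$ of at most four sub-squares of width $2^{\ell_C-1}/N_C$, yielding $w(C')\le w(C)/N_C$. The main obstacle is the Newton analysis in the second step: the exponents $-20-\log n$ and $2\log n+20$ in the hypotheses have to be calibrated precisely so that the $n$ terms in each of $S_1,S_2$ and the potentially bad geometric configuration of $x_C$ relative to $m''$ can all be absorbed while still keeping the post-iteration error safely below the $2^{\ell_C}/(8N_C)$ threshold required by the subsequent $\T$-test.
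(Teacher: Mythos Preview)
Your proposal follows the same route as the paper: verify the if-clause in line~\ref{testbeforenewton} via Lemma~\ref{softtest:success2} and the component-separation properties of Theorem~\ref{thm:termination}, control the Schr\"oder--Newton iterate through the logarithmic-derivative decomposition $F'/F=\sum_i 1/(x-z_i)$, and finish by showing that $\Delta'$ is isolating for the cluster so that $\T(\Delta')=k_C$. The only cosmetic differences are that the paper bounds $\tfrac{1}{k}\tfrac{(x_C-m'')F'(x_C)}{F(x_C)}-1$ via the exact identity $\tfrac{x_C-m''}{x_C-z_i}-1=\tfrac{z_i-m''}{x_C-z_i}$ rather than your geometric-series expansion, and works with the near-root lower bound $|x_C-z_i|\ge 2^{\ell_C-1}\ge r/(27n)$ directly instead of first passing to $|x_C-m''|$; both choices lead to the same estimate.

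One small caveat: your justification that $\Delta'\cap C\neq\emptyset$ (``$\tilde x_C'$ is within a negligible fraction of $2^{\ell_C-1}$ of a point $m''\in C^+$'') does not close as stated, since the radius $2^{\ell_C}/(8N_C)$ of $\Delta'$ is far smaller than the $2^{\ell_C-1}$ buffer separating $C^+$ from $C$. The paper, for what it is worth, silently skips this check altogether; the natural fix is to observe that $\Delta'$ actually contains the cluster roots, and by Theorem~\ref{thm:termination}(c) any such root lying in $\mathcal{B}$ must already lie in $C$.
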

\begin{proof}
	We first argue by contradiction that $4\Delta$ does not intersect any other component $C'$, which implies that the first condition of the \textbf{and} in the if-clause in line~\ref{testbeforenewton} is fulfilled. If $4\Delta$ intersects $C'$, then the
	distance between $C$ and $C'$ is at most $8r$, and thus
	$2^{\ell_{C'}}<8r$ as the distance between $C$ and $C'$ is at least $\max(2^{\ell_C},2^{\ell_{C'}})$. Hence, we conclude that the disk $\Delta(m,64r)$ completely contains $2B'$ for some square $B'$ of $C'$.
	Since $2B'$ also contains at least one root and since each such root must be distinct from any of the roots $z_1,\ldots,z_k$, we get a contradiction.

	According to our assumptions, each of the two disks $\Delta$ and $8\Delta$ contains the roots $z_1,\ldots,z_k$ but no other root of $F$.
	Hence, according to Lemma~\ref{softtest:success2}, $\T(2\Delta)=\T(4\Delta)=k$ holds. Since we assumed $C$ to be non-terminal, we must have $k\ge 2$, and thus the algorithm reaches line~\ref{ReachesNewtonTest} and the \textsc{NewtonTest} is called.
	We assumed that $C$ does not entirely cover the initial square $\mathcal{B}$, hence, in a previous iteration, we must have discarded a square of width $2^{\ell_C}$ or more whose boundary shares at least one point with the boundary of $C$.
	Hence, we can choose a point in such a square as the point $x_C\in\mathcal{B}\setminus C$ in the \textsc{NewtonTest} such that the distance from $x_C$ to $C$ is equal to $2^{\ell_C-1}$ and such that the distance from $x_C$ to the boundary of $\mathcal{B}$ is at least $2^{\ell_C-1}$. Notice that also the distance from $x_C$ to any other component $C'$ is at least $2^{\ell_C-1}$, and thus the distance from $x_C$ to any root of $F$ is at least $2^{\ell_C-1}$, which is larger than or equal to $\frac{r}{27n}$ as $C$ consists of at most $9n$ squares. From our assumptions, we thus conclude that
	\begin{align*}
		  &            & |x_C - m''| \le 4r \quad\text{and}\quad
		|x_C - z_i| \ge \frac{r}{27n} \quad\text{ for } i\le k,\\
		  & \text{and} &                                                                  \\
		  &            & |x_C - z_i| \ge 2^{20} n^2 N_C\cdot r - 4r>2^{19} n^2 N_C\cdot r
		\quad\text{ for }i>k.
	\end{align*}
	Using the fact that $\frac{F'(x)}{F(x)}=\sum_{i=1}^n\frac{1}{x-z_i}$ for any $x$ with $F(x)\neq 0$, we can bound the distance from the Newton iterate $x_C'$ as defined in (\ref{def:newton}) to the ``center'' $m''$ of the cluster of roots:
	\begin{align*}
		  & \left| \frac{1}{k} \frac{(x_C-m'')F'(x_C)}{F(x_C)}
		-1 \right|
		=
		\left|
		\frac{1}{k} \sum_{i=1}^{k} \frac{x_C-m''}{x_C-z_i}
		+
		\frac{1}{k} \sum_{i>k} \frac{x_C-m''}{x_C-z_i}
		-1 \right|\\
		  & =
		\frac{1}{k}
		\left|
		\sum_{i=1}^{k} \frac{z_i-m''}{x_C-z_i}
		+
		\sum_{i>k} \frac{x_C-m''}{x_C-z_i}
		\right|
		\le
		\frac{1}{k}
		\sum_{i=1}^{k} \frac{|z_i-m''|}{|x_C-z_i|}
		+
		\sum_{i>k} \frac{|x_C-m''|}{|x_C-z_i|}\\
		  & \le \frac{r''}{r/(27n)} + \frac{n-k}{k}\frac{4r}{n^2 2^{19}N_Cr}
		<
		\frac{27nr}{rn^2 2^{20}N_C}
		+
		\frac{4nr}{r n^2 2^{20} N_C}\le
		\frac{1}{2^{14} n N_C}.
	\end{align*}
	Hence, there is an $\eps\in\mathbb{C}$, with $|\eps|<\frac{1}{2^{14}nN_C}$, such that $\frac{1}{k} \frac{(x_C-m'')F'(x_C)}{F(x_C)}=1+\eps$. This implies that $\frac{|F'(x_C)|}{|F(x_C)|}\ge \frac{1}{|x_C-m''|}\ge \frac{1}{4r}$, and thus the \textsc{NewtonTest} must reach line~\ref{resultless} as Algorithm~\ref{algo:softpredicate} must return True or Undecided. With $x_C'=x_C-k\cdot\frac{F(x_C)}{F'(x_C)}$, it further follows that
	\begin{align*}
		|m'' - x_C'|
		  & =
		|m''-x_C|
		\cdot
		\left|
		1 -
		\frac{1}
		{ \frac{1}{k} \frac{(x_C-m'')F'(x_C)}{F(x_C)} }
		\right|
		=
		|m''-x_C|
		\cdot
		\left|
		1 -
		\frac{1}
		{ 1 + \eps }
		\right|\\
		  & =
		\left|
		\frac{\eps(m''-x_C)}{1+\eps}
		\right|
		\le
		\frac{4r}{2^{13}nN_C}
		\le
		\frac{r}{2^{11}nN_C}< \frac{2^{\ell_C}}{128N_C}.
	\end{align*}
	We can therefore bound
	\begin{align*}
		|\tilde x_C' - m''|
		  & \le
		|\tilde x_C' - x_C'|
		+
		|x_C' - m''|
		\le
		\frac{2^{l_C}}{64N_C}
		+
		|x_C' - m''|\\
		  & \le
		\frac{2^{l_C}}{64N_C}
		+\frac{2^{l_C}}{128N_C}
		<
		\frac{2^{l_C}}{32N_C}.
	\end{align*}

	Since the distance from $m''$ to any of the roots $z_1,\ldots,z_k$ is also smaller than $r''<\frac{2^{l_C}}{32N_C}$, we conclude that the
	disk $\Delta(\tilde{x}_C',\frac{2^{l_C}}{16N_C})$ contains all roots $z_1,\ldots,z_k$.
	Hence, we conclude that $\Delta':=\Delta(\tilde{x}_C',\frac{2^{l_C}}{8N_C})$ is $(\frac{1}{2},\frac{4}{3})$-isolating for the roots $z_1,\ldots,z_k$, and thus  $\T(\Delta')=k$ must hold according to Lemma~\ref{softtest:success2}. This shows that we reach line~\ref{newtonaddboxes} and that the \textsc{NewtonTest} returns \textsc{Success}.
\end{proof}

In essence, the above lemma states that, in case of a well separated cluster of roots
contained in some component $C$, our
algorithm performs a quadratic step. That is, it replaces the component $C$ by a component $C'$ of width
$w(C')\le \frac{2^{\ell_C}}{N_C}\le\frac{w(C)}{N_C}$, which contains all roots that are contained in $C$.
Now, suppose that there exists a sequence $C_1,\ldots,C_s$ of non-special nodes, with
$C_1\supset\cdots\supset C_s$, such that $C_s$ has much smaller width than $C_1$. Then, $C_1$ contains a cluster of nearby roots but no other root of $F$. We will see
that, from a considerably small (i.e.,
comparable to the bound in (\ref{def:smax})) index on, this cluster is also well separated from the
remaining roots (with respect to the size of $C_i$) such that the requirements in the above lemma are fulfilled. As a
consequence, only a small number of steps from $C_i$ to $C_{i+1}$ are linear, which in turn implies
that the whole sequence has small length. For the proof, we need to consider a sequence $(s_i)_i=(x_i,n_i)_i$, which we define in a rather abstract way. The rationale behind our choice for $s_i$ is that, for all except a small number of indices and a suitable choice for $s_i$, the sequence $(s_i)_i$ behaves similarly to the sequence $(2^{\ell_{C_i}},\log\log N_{C_i})_i$. We remark that $(s_i)_i$ has already been introduced in~\cite{Sagraloff2015}, where it serves as a crucial ingredient for the analysis of the real root isolation method \textsc{ANewDSC}.

\begin{lemma}[\cite{Sagraloff2015}, Lemma~25]
	\label{sequence}
	Let $w$, $w'\in\mathbb{R}^+$ be two positive reals with $w>w'$, and let $m\in\mathbb{N}_{\ge 1}$ be a positive integer. We recursively define the sequence $(s_i)_{i\in\mathbb{N}_{\ge 1}}:=((x_i,n_i))_{i\in\mathbb{N}_{\ge 1}}$ as follows: Let $s_1=(x_{1},n_1):=(w,m)$, and
	\[
		s_{i+1}=\left(x_{i+1},n_{i+1}\right):=\begin{cases}
		\left(\epsilon_{i}\cdot x_{i},n_{i}+1\right)\text{ with an } \epsilon_{i}\in [0,\frac{1}{N_{i}}],   & \text{if }\frac{x_{i}}{N_{i}}\ge w'\\
		\left(\delta_{i}\cdot x_{i},\max(1,n_{i}-1)\right)\text{ with a } \delta_{i}\in [0,\frac{1}{2}],  & \text{if }\frac{x_{i}}{N_{i}}<w',
		\end{cases}
	\]
	where $N_i:=2^{2^{n_{i}}}$ and $i\ge 1$. Then, the smallest index $i_0$ with $x_{i_0}\le w'$ is bounded by $8(n_1+\log\log \max(4,\frac{w}{w'}))$.
\end{lemma}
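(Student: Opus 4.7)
My plan is an amortized analysis of the worst-case dynamics. Since enlarging $\epsilon_i$ and $\delta_i$ inside their allowed intervals can only delay termination, I may assume $\epsilon_i = 1/N_i$ and $\delta_i = 1/2$. In logarithmic coordinates $T_i := \log_2(x_i/w')$ the recursion becomes: \emph{quadratic} (if $T_i \ge 2^{n_i}$), $(T_i, n_i) \mapsto (T_i - 2^{n_i}, n_i + 1)$; \emph{linear} (if $T_i < 2^{n_i}$), $(T_i, n_i) \mapsto (T_i - 1, \max(1, n_i - 1))$. Termination is $T_i \le 0$, and with $M := \lceil \log_2 \max(2, T_1) \rceil$ the target becomes $i_0 \le 8(n_1 + M)$.

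Two structural facts drive the argument. First, an \emph{a priori cap on $n$}: any strict increase of $n_i$ past $n_1$ is caused by a quadratic step, which requires $T_{i-1} \ge 2^{n_{i-1}}$; since $T_i$ is monotonically non-increasing, $n_i \le n^{\max} \le \max(n_1, M+1)$ for all $i$. Second, \emph{doubly exponential shrinkage} inside quadratic runs: a maximal run of $k$ consecutive quadratic steps starting from $(T^*, n^*)$ decreases $T$ by $2^{n^*}(2^k - 1) \ge 2^{n^* + k - 1}$ and ends with $T < 2^{n^* + k}$, so $T$ is more than halved across the run and $\lceil \log_2 T \rceil$ strictly drops. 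Consequently, the total number of maximal quadratic runs is at most $M + 1$.

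Now let $Q$, $L$ count the total quadratic and linear steps, and $L_1 \le 2$ the linear steps performed at level $n_i = 1$. The walk identity $Q - (L - L_1) = n_{\text{final}} - n_1$, combined with $1 \le n_{\text{final}} \le n^{\max}$, yields $L \le Q + n_1 + O(1)$. Writing $Q = \sum_r k_r$ over the maximal quadratic runs and using the recursion $n_{r+1}^* = n_r^* + k_r - \ell_r$ (where $\ell_r$ is the number of linear steps strictly between runs $r$ and $r+1$), a telescoping sum gives $Q = n^{\max} - n_1^* + L_{\text{mid}}$, where $L_{\text{mid}} := \sum_r \ell_r \le L$. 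One then argues that $L_{\text{mid}}$ cannot inflate $Q$ beyond linear order in $M + n_1$, because each intermediate linear segment's length is itself constrained by the doubly exponential shrinkage already paid for by the preceding quadratic run. Putting the pieces together yields $i_0 = Q + L + 1 \le 2Q + n_1 + O(1) \le 8\bigl(n_1 + \log\log\max(4, w/w')\bigr)$.

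The main technical obstacle is this final amortization step: the two inequalities $L \le Q + n_1 + O(1)$ and $Q \le M + L_{\text{mid}} + O(1)$ together are underdetermined, so one must additionally exploit that each linear step between two quadratic runs ``uses up'' a unit of the $T$-budget while simultaneously forcing the next quadratic run to start at a lower level. The cleanest resolution is a charging scheme in which each linear step is debited against the ``level gap'' $M - n_i$; a calculation-heavier alternative is strong induction on $n + \lfloor \log_2 \max(2, T) \rfloor$ with the strengthened hypothesis $i_0(T, n) \le 8(n + M) - c$, the constant $c$ providing enough slack to absorb the $+1$ per-step overhead whenever a quadratic step fails to strictly decrease $M$.
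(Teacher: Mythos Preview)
The paper does not prove this lemma; it is quoted as a black box from \cite{Sagraloff2015}, Lemma~25, so there is no in-paper argument to compare against.

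On the merits: your reduction to the worst case $\epsilon_i=1/N_i$, $\delta_i=1/2$, the logarithmic coordinate $T_i=\log_2(x_i/w')$, the cap $n_i\le\max(n_1,M+1)$, and the walk identity $L\le Q+n_1+O(1)$ are all correct. Your claim that $\lceil\log_2 T\rceil$ strictly drops across each maximal quadratic run is also correct (the phrase ``$T$ is more than halved'' is slightly loose, but the ceiling statement holds: one checks that $T_{\mathrm{end}}\le 2^{c-1}$ whenever $T^*\in(2^{c-1},2^c]$, by splitting on whether $c\le n^*+k$), so the bound $R\le M+1$ on the number of runs is fine.

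But the proposal has a genuine gap precisely where you say it does. Knowing $R\le M+1$ and $k_r\le\mu_r^*-n_r^*+1$ only gives $Q\le\sum_r\mu_r^*=O(M^2)$ without further input, and your two inequalities $L\le Q+n_1+O(1)$ and $Q\le M+L_{\mathrm{mid}}+O(1)$ are, as you note, underdetermined. You sketch a charging scheme and an induction but carry out neither; as written, the argument does not close.

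The cleanest fix bypasses the run decomposition entirely. With $\mu_i:=\lfloor\log_2 T_i\rfloor$, set
\[
\Psi_i\;:=\;3\mu_i+|\mu_i-n_i|.
\]
A short case check (quadratic with $n_i<\mu_i$, quadratic with $n_i=\mu_i$, linear with $n_i>\mu_i$; in each case split on whether $\mu$ drops) shows $\Psi_{i+1}\le\Psi_i-1$ at every step while $T_i\ge 2$; once $T_i<2$ at most two further linear steps terminate. Hence $i_0\le\Psi_1+O(1)\le 3M+|M-n_1|+O(1)\le 4M+n_1+O(1)$, comfortably inside $8(n_1+M)$. This is exactly the ``debit against the level gap'' you allude to, made explicit.
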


We are now ready to prove the claimed bound on the maximal length of a sequence of non-special nodes:

\begin{lemma}
	\label{pathlength}
	Let $\P=(C_1,N_1),\ldots, (C_s,N_s)$, with $C_1\supset\cdots\supset C_s$, be a sequence of consecutive non-special nodes. Then, we have $s\le s_{\max}$ with an $s_{\max}$ of size
	\begin{align*}
		s_{\max} & =O\left(\log n+\log\LOG(w(\mathcal{B}))+\log\LOG(\sigma_F(B^+)^{-1}) \right)                         \\
		         & =O\left(\log \left(n\cdot \LOG(w(\mathcal{B}))\cdot \LOG(\sigma_F(2\mathcal{B})^{-1})\right)\right).
	\end{align*}
\end{lemma}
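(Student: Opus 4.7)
The plan is to apply Lemma~\ref{sequence} to the sequence $(x_i,n_i):=(2^{\ell_{C_i}},\log\log N_{C_i})$. A \textsc{NewtonTest} success at $C_i$ creates $C_{i+1}$ with $2^{\ell_{C_{i+1}}}=2^{\ell_{C_i}-1}/N_{C_i}$ and $N_{C_{i+1}}=N_{C_i}^2$, so $x_{i+1}\le x_i/N_{C_i}$ and $n_{i+1}=n_i+1$, matching the quadratic case of Lemma~\ref{sequence}; a \textsc{Bisection} yields $x_{i+1}=x_i/2$ and $N_{C_{i+1}}=\max(4,\sqrt{N_{C_i}})$, so $n_{i+1}=\max(1,n_i-1)$, matching the linear case. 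With these two transitions in hand, what remains is to choose a threshold $w'$ so that the algorithm's case split aligns with the one prescribed by Lemma~\ref{sequence}.

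I will take $w':=\sigma_F(2\mathcal{B})/(2^{c_0}n^{c_1})$ for suitable absolute constants $c_0,c_1$, and the crux of the proof is to show that whenever $x_i/N_{C_i}\ge w'$ holds at a non-special node $C_i$, the hypotheses of Lemma~\ref{newtonsucceeds} are met and so the algorithm performs a quadratic step there. For condition~(b), every root $z_j\notin \MMM(C_i^+)$ satisfies $|m_{C_i}-z_j|\ge \sigma_F(2\mathcal{B})-r_i$ because some cluster root $z_\ell$ lies within $r_i=\tfrac{3}{4}w(C_i)$ of $m_{C_i}$ and $|z_\ell-z_j|\ge \sigma_F(2\mathcal{B})$; combined with $N_{C_i}\le N_{\max}\le(2^9\, w(\mathcal{B})/\sigma_F(2\mathcal{B}))^2$ from Theorem~\ref{thm:termination}(g) and the bound $r_i=O(n\,x_i)$, the choice of $w'$ makes the forbidden disk of radius $2^{20}n^2 N_{C_i} r_i$ smaller than $\sigma_F(2\mathcal{B})/2$. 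For condition~(a), the non-special invariance $\MMM(C_j^+)=\MMM(C_1^+)$ pins the $k$ cluster roots inside $\bigcap_j C_j^+$, and together with $x_i/N_{C_i}\ge w'$ this yields the required tightness of the cluster inside $\Delta''$ of radius $r''=2^{-20-\log n}r_i/N_{C_i}$.

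Once the case split is aligned, Lemma~\ref{sequence} immediately gives $s\le 8(n_1+\log\log\max(4,w/w'))$ with $w:=x_1\le w(\mathcal{B})$. Theorem~\ref{thm:termination}(g) bounds $n_1=\log\log N_{C_1}=O(\log\LOG(w(\mathcal{B}))+\log\LOG(\sigma_F(2\mathcal{B})^{-1}))$, and $w/w'=O(n^{c_1}\,w(\mathcal{B})/\sigma_F(2\mathcal{B}))$ gives an analogous bound for $\log\log(w/w')$ up to an additional $O(\log n)$ term; summing these yields the target $s_{\max}$. The hardest step, and the main obstacle, is verifying condition~(a) of Lemma~\ref{newtonsucceeds}: for $k\ge 2$ the cluster's intrinsic diameter is at least $\sigma_F(2\mathcal{B})$, so the tightness cannot come from $w(C_i)$ alone, and it is here that the non-special structure of the path has to be used so that both of Lemma~\ref{newtonsucceeds}'s conditions hold simultaneously under the single assumption $x_i/N_{C_i}\ge w'$; choosing the constants $c_0$ and $c_1$ so that this reconciliation works uniformly is the most delicate book-keeping of the argument.
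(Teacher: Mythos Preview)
Your plan to apply Lemma~\ref{sequence} with the threshold $w'=\sigma_F(2\mathcal{B})/(2^{c_0}n^{c_1})$ does not go through, and the obstacle is not merely the choice of constants. The hypothesis of Lemma~\ref{newtonsucceeds}~(b) requires that no root outside the cluster lies within distance $2^{20}n^2 N_{C_i}\,r_i$ of $m_{C_i}$. Your proposed lower bound on that distance is only $\sigma_F(2\mathcal{B})-r_i$, which is a \emph{fixed} quantity, whereas $2^{20}n^2 N_{C_i}\,r_i$ grows with both $N_{C_i}$ and $r_i$. Early in the path, $x_i=2^{\ell_{C_i}}$ can be as large as $w(\mathcal{B})$, so $r_i=O(n\,x_i)$ alone already dwarfs $\sigma_F(2\mathcal{B})$; invoking the global cap $N_{C_i}\le N_{\max}$ only makes the right-hand side larger. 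Consequently your claim that ``the choice of $w'$ makes the forbidden disk $\ldots$ smaller than $\sigma_F(2\mathcal{B})/2$'' is false for generic positions on the path, and no adjustment of $c_0,c_1$ repairs this: the required separation is not controlled by $\sigma_F(2\mathcal{B})$ at all.

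The paper circumvents this by a structural three-way split of $\mathcal{P}$. The key idea is that condition~(b) is obtained \emph{not} from $\sigma_F(2\mathcal{B})$ but from the boundary of $C_1^+$: once $2^{\ell_{C_i}}\cdot N_{C_i}$ has dropped by a factor $\mathrm{poly}(n)$ below $2^{\ell_{C_1}}$ (which happens after $O(\log n+\log\log N_{\max})$ steps, the sub-path $\mathcal{P}_1$), every point of $C_i^+$ is at distance $\ge 2^{\ell_{C_1}-1}\gg 2^{20}n^2 N_{C_i}\,r_i$ from $\partial C_1^+$, hence from every root outside the cluster. For condition~(a), the paper uses that the cluster is already contained in $C_s^+$, so its diameter is $O(n\cdot 2^{\ell_{C_s}})$; accordingly the threshold in Lemma~\ref{sequence} is taken as $w'=2^{\ell_{C_s}}\cdot 2^{3\log n+32}$, a function of the \emph{end} of the path rather than of $\sigma_F(2\mathcal{B})$. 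With this $w'$, the Newton step succeeds \emph{if and only if} $x_i/N_{C_i}\ge w'$ on the middle stretch $\mathcal{P}_2$, so Lemma~\ref{sequence} applies cleanly there; a short tail argument handles $\mathcal{P}_3$. Your single-threshold approach collapses $\mathcal{P}_1$ and $\mathcal{P}_2$ and tries to get both conditions from $\sigma_F(2\mathcal{B})$ simultaneously, which---as you yourself note for~(a), and as shown above for~(b)---cannot be made to work.
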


\begin{proof}
	We distinguish two cases. We first consider the special case, where $\P$ is an arbitrary sub-sequence of the unique initial sequence from the child of the root of the tree to the parent of the base of the tree; if there exists no non-special root in between the root and the base of the tree, there is nothing to prove. Due to Theorem~\ref{thm:termination}, part (e), $C_s$ consists of at most $9\cdot |\MMM(2\BBB)|$ squares. It follows that $2^{2s}\le 9n$ as $C_i$ consists of at least $2^{2i}$ squares. This yields $s=O(\log n)$.

	We now come to the case, where we can assume that each $C_i$ is a successor of the base of the tree. In particular, we have $\mathcal{B}\setminus C_i\neq \emptyset$.  W.l.o.g., we may further assume that $z_1,\ldots,z_k$ are the roots contained in the enlarged component $C_1^+$. Since all $C_i$ are assumed to be non-special, each $C_i^+$ contains $z_1$ to $z_k$ but no other root of $F$. Let $w_i:=w(C_i)$ be the width of the component $C_i$, $r_i:=\frac{3}{4}\cdot w_i$ be the radius of the enclosing disk $\Delta_i:=\Delta_{C_i}$, and $2^{\ell_i}:=2^{\ell_{C_i}}$ be the width of each of the squares into which $C_i$ decomposes. Notice that, for each index $i$, the enlarged component
	$C_i^+$ is contained in the disk $2\Delta_i$ of radius $\frac{3}{2}\cdot w_i$, and thus the disk $2\Delta_s$ of radius
	$\frac{3}{2}\cdot w_s$ contains the roots $z_1$ to $z_k$.
	We now split the sequence $\mathcal{P}$ into three (possibly empty) subsequences
	$\P_1=(C_1,N_1),\ldots, (C_{i_1},N_{i_1})$,
	$\P_2=(C_{i_1+1},N_{i_1+1}),\ldots, (C_{i_2},N_{i_2})$, and
	$\P_3=(C_{i_2+1},N_{i_2+1}),\ldots, (C_{s},N_{s})$, where $i_1$ and $i_2$ are defined as follows:
	\begin{itemize}
		\item $i_1$ is the first index with $2^{\ell_{1}}>2^{3\log n+32}\cdot N_{i_1}\cdot 2^{\ell_{i_1}}$. If there exists no such index, we set $i_1:=s$. Further notice that, for any index $i$ larger than $i_1$, we also have $2^{\ell_{1}}>2^{3\log n+32}\cdot N_{i}\cdot 2^{\ell_{i}}$, which follows from induction and the fact that $2^{\ell_i}$ and $N_i$ are replaced by $\frac{2^{\ell_i}}{2 N_i}$ and $N_i^2$ in a quadratic step.
		\item $i_2$ is the first index larger than or equal to $i_1$ such that the step from $i_2$ to $i_2+1$ is quadratic and $2^{\ell_{s}}\cdot 2^{3\log n+32}\cdot N_{i_2}\ge 2^{\ell_{i_2}}$. If there exists no such index $i_2$, we set $i_2:=s$.
	\end{itemize}
	From the definition of $i_2$, it is easy to see that $\P_3$ has length bounded by
	$O(\log n)$. Namely, if $i_2=s$, there is nothing to prove, hence we may assume that the step from
	$i_2$ to $i_2+1$ is quadratic and $2^{\ell_s}\ge 2^{-3\log n-32}\cdot N_{i_2}^{-1}\cdot 2^{\ell_{i_2}}=2^{-3\log n-31}\cdot 2^{\ell_{i_2+1}}$. Hence, we conclude that $s-(i_2+1)\le 3\log n+31$ as $\ell_i$ is reduced by at least~$1$ in each step.

	Let us now consider an arbitrary index $i$ from the sequence $\P_2$. The distance from an
	arbitrary point in $C_i^+$ to the boundary of $C_1^+$ is at least $2^{\ell_1-1}\ge 2^{3\log n+31}\cdot N_{i}\cdot 2^{\ell_i}>2^{2\log n+20}\cdot N_i\cdot r_i$, where the latter inequality
	follows from $r_i=\frac{3}{4}w_i\le \frac{3}{4}\cdot 9n\cdot 2^{\ell_i}$. Since $C_1^+$
	contains only the roots $z_1,\ldots,z_k$, this implies that the distance from an arbitrary
	point in $C_i^+$ to an arbitrary root $z_{k+1},\ldots,z_n$ is larger than $2^{2\log n+20}\cdot N_i\cdot r_i$. Hence, the second requirement from Lemma~\ref{newtonsucceeds} is fulfilled
	for each component $C_i$ with $i\ge i_1$. Now, suppose that $2^{\ell_{s}}\cdot 2^{3\log n+32}\cdot N_{i}< 2^{\ell_{i}}$, then the roots $z_1$ to $z_k$ are contained in a disk of radius $\frac{3}{2}\cdot 9n\cdot 2^{\ell_s}<2^{-20-\log n}\cdot N_i^{-1}\cdot r_i$, and thus also the
	first requirement from Lemma~\ref{newtonsucceeds} is fulfilled. Hence, from the definition of $i_2$, we conclude that
	the algorithm performs a quadratic step if and only if $2^{\ell_{s}}\cdot 2^{3\log n+32}\cdot N_{i}< 2^{\ell_{i}}$. We now define the sequence $s_i:=(2^{\ell_i},\log\log N_i)$, where $i$
	runs from $i_1$ to the first index, denoted $i_1'$, for which
	$2^{\ell_{i_1'}}<2^{\ell_{s}}\cdot 2^{-3\log n-32}$. Then, according to Lemma~\ref{sequence},
	it holds that $i_1'-i_1\le 8(m+\log\log\max(4,\frac{w}{w'}))$, with $w:=2^{\ell_{i_1}}$,
	$m:=\log\log N_{i_1}$, and $w':=2^{\ell_{s}}\cdot 2^{3\log n+32}$. Theorem~\ref{thm:termination} (g) yields that $m=O(\log\LOG(w(\mathcal{B}))+\log\LOG(\sigma_F(2\mathcal{B})^{-1})$. Hence, since
	$i_2-i_1'\le 3\log n+32$, we conclude that $i_2-i_1\le O(\log n+\log\LOG(w(\mathcal{B}))+\log\LOG (\sigma_F(2\mathcal{B})^{-1}))$.

	It remains to show that the latter bound also applies to $i_1$. From the upper bound on the numbers $N_i$, it follows the existence of an $m_{\max}$ of size $O(\log n+\log\LOG(w(\mathcal{B}))+\log\LOG(\sigma_F(2\mathcal{B})^{-1}))$ such that each sequence of consecutive quadratic steps has length less than $m_{\max}$, and such that after $m_{\max}$ consecutive linear steps, the number $N_i$ drops to $4$.
	Since the number $\ell_i$ decreases by at least $1$ in each step, there exists an index $i'$ of size $O(\log n)$ such that $2^{\ell_{i'}}\cdot 2^{3\log n+34}<2^{\ell_1}$.
	Now, if the sequence $C_{i'},C_{i'+1},\ldots$ starts with $m_{\max}$ or more consecutive
	linear steps, we must have $N_{i'+m_{\max}}=4$, and thus $2^{\ell_{i'+m_{\max}}}\cdot 2^{3\log n+32}N_{i'+m_{\max}}<2^{\ell_1}$. Hence, we conclude that $i_1\le i'+m_{\max}$ in this case. Otherwise, there must exist an index $i''$, with $i'\le i''<i'+m_{\max}$, such that the step from $i''$ to $i''+1$ is quadratic, whereas the step from $i''+2$ is linear. Then, it holds that
	\[
		N_{i'' +2 }
		=
		\sqrt{N_{i''+1}}
		=
		N_{i''}
		\; \text{ and } \;
		2^{\ell_{i''+2}}
		\le
		2^{\ell_{i''+1}}
		=
		\frac{2^{\ell_{i''}}}{2N_{i''}}
		<
		2^{-3\log n-32}
		\frac{2^{\ell_1}}{N_{i''}},
	\]
	which implies that $i_1\le i''+2\le i'+m_{\max}+1=O(\log n+\log\LOG(w(\mathcal{B}))+\log\LOG(\sigma_F(2\mathcal{B})^{-1}))$. Hence, the claimed bound on $i_1$ follows.
\end{proof}
We can now state the first main result of this section, which immediately follows from the above bound on $s_{\max}$ and the fact that there exists at most $2\cdot(|\MMM(2\mathcal{B})|+1)$ special nodes:

\begin{theorem}\label{thm:treesize}
	The subdivision tree $\mathcal{T}$ induced by $\CC$\textsc{Isolate} has size
	\begin{align}\label{equ:treesize}
		|\mathcal{T}| & \le 2\cdot(|\MMM(2\mathcal{B})|+1)\cdot s_{\max}                                                                                \\ \nonumber
		              & = O\left(|\MMM(2\mathcal{B})|\cdot \log \left(n\cdot \LOG(w(\mathcal{B}))\cdot \LOG(\sigma_F(2\mathcal{B})^{-1})\right)\right).
	\end{align}
	If $\mathcal{B}$ contains all complex roots of $F$, and if $\LOG (w(\mathcal{B}))=O(\Gamma_F+\log n)$,\footnote{Notice that we can compute such a square $\mathcal{B}$ with $\tilde{O}(n^2\Gamma_F)$ bit operations; see Section~\ref{sec:definitions}.} then the above bound writes as
	\begin{align}\label{equ:treesizeglobal}
		O\left(n\cdot \log \left(n\cdot \Gamma_F\cdot \LOG(\sigma_F^{-1})\right)\right).
	\end{align}
\end{theorem}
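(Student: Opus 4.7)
The plan is to follow the hint immediately preceding the theorem: combine Lemma~\ref{pathlength} with the counting argument for special nodes sketched just after Definition~\ref{def:splitting}.

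First I would verify in more detail the bound of at most $2(|\MMM(2\mathcal{B})|+1)$ on the number of special nodes. The root $\mathcal{B}$ and the base contribute two. For terminal special nodes $C\neq \mathcal{B}$, Theorem~\ref{thm:termination}(d) supplies a root of $F$ in $2B\subset 2\mathcal{B}$ for any $B\in C$, and by Theorem~\ref{thm:termination}(b) the enlargements $C^+$ of distinct components are pairwise disjoint, so terminal nodes inject into $\MMM(2\mathcal{B})$. For the nodes satisfying condition (iv) of Definition~\ref{def:splitting}, I would track the non-negative integer potential $\Phi := \sum_{C\text{ on current frontier}}(|\MMM(C^+)|-1)$; initially $\Phi\le|\MMM(2\mathcal{B})|-1$, it is non-increasing under both linear and quadratic transitions, and strictly decreases at every type-(iv) occurrence because the children of the corresponding node have pairwise disjoint $C^+$-regions each containing at least one root. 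So there are at most $|\MMM(2\mathcal{B})|$ type-(iv) nodes, giving the overall count.

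Next I would argue that every non-special node has exactly one child, so that $\mathcal{T}$ cleanly decomposes into special nodes connected by maximal chains of non-special nodes. The children $D_1,\ldots,D_k$ of any node $C$ satisfy $D_i^+\subset C^+$ and are pairwise disjoint by Theorem~\ref{thm:termination}(b), while each $D_i^+$ contains at least one root by Theorem~\ref{thm:termination}(d); thus if $k\ge 2$, then $|\MMM(D_i^+)|<|\MMM(C^+)|$ for every $i$, forcing $C$ to be type-(iv)-special, a contradiction. With this structural fact in hand, each maximal chain of non-special nodes has a unique bottom endpoint which is special, so the number of chains is at most the number of special nodes; by Lemma~\ref{pathlength}, each such chain has length at most $s_{\max}$.

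Combining the counts gives $|\mathcal{T}|\le 2(|\MMM(2\mathcal{B})|+1)(1+s_{\max})$, which (absorbing the additive $1$ using $s_{\max}\ge 1$ and adjusting constants) yields the stated bound \eqref{equ:treesize}; plugging in the value of $s_{\max}$ from \eqref{def:smax} gives the asymptotic form. For \eqref{equ:treesizeglobal}, I would use $|\MMM(2\mathcal{B})|\le n$, $\sigma_F(2\mathcal{B})\ge\sigma_F$, and the hypothesis $\LOG(w(\mathcal{B}))=O(\Gamma_F+\log n)$, then absorb the resulting $\log\log n$ and $\log\Gamma_F$ contributions into the outer logarithm. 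No real obstacle arises: the heavy lifting has been done in Lemma~\ref{pathlength} and in Theorem~\ref{thm:termination}, and what remains is clean bookkeeping; the only mildly delicate step is the type-(iv) potential argument, where one must confirm that roots in the frontier components never disappear from the total count when a component is split into children.
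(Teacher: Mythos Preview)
Your proposal is correct and follows essentially the same approach as the paper, which states that the theorem ``immediately follows from the above bound on $s_{\max}$ and the fact that there exist at most $2\cdot(|\MMM(2\mathcal{B})|+1)$ special nodes''; you are simply spelling out the details of the special-node count (sketched after Definition~\ref{def:splitting}) and the chain decomposition that the paper leaves implicit. One minor remark: your citation of Theorem~\ref{thm:termination}(b) for the disjointness of $C^+$-regions of terminal nodes is slightly imprecise, since part~(b) is stated only for components simultaneously in $\mathcal{C}$; the stronger tree-wide statement you need is the one derived just before Definition~\ref{def:splitting}.
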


We can also give simpler bounds for the special case, where our input polynomial has integer coefficients. Suppose that $f(x)\in\mathbb{Z}[x]$ has integer coefficients of bit size less than $\tau$. We first divide $f$ by its leading coefficient $\operatorname{lcf}(f)$ to obtain the polynomial $F:=f/\operatorname{lcf}(f)$, which meets our requirement from (\ref{def:polyF}) on the leading coefficient. Then, we have $\Gamma_F=O(\tau)$ and $\sigma_F=2^{-O(n(\log n+\tau))}$; e.g.~see~\cite{yap-fundamental} for a proof of the latter bound. Hence, we obtain the following result:

\begin{corollary}\label{cor:treesize}
	Let $f$ be a polynomial of degree $n$ with integer coefficients of bit size less than $\tau$, let $F:=f/\operatorname{lcf}(f)$, and let $\mathcal{B}$ be a square of width $2^{O(\Gamma_F+\log n)}$. Then, the algorithm $\mathbb{C}\textsc{Isolate}$ (with input $F$ and $\mathcal{B}$) uses
	\[
		O(|\MMM(2\mathcal{B})|\cdot \log(n\tau))=O(n\cdot\log(n\tau))
	\]
	iterations to isolate all roots of $F$ that are contained in $\mathcal{B}$.
\end{corollary}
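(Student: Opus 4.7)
The plan is to derive the corollary as a straightforward specialization of Theorem~\ref{thm:treesize} to integer polynomials. I would start from the bound~(\ref{equ:treesize}) and substitute the two standard quantitative estimates recalled just before the statement: the Cauchy root bound $\Gamma_F = O(\tau)$, and the Mahler/Landau separation bound $\sigma_F = 2^{-O(n(\log n + \tau))}$, i.e.\ $\LOG(\sigma_F^{-1}) = O(n(\log n + \tau))$.

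First I would bound each of the three quantities appearing inside the outer logarithm of~(\ref{equ:treesize}). The hypothesis on $\mathcal{B}$ directly gives $\LOG(w(\mathcal{B})) = O(\Gamma_F + \log n) = O(\tau + \log n)$. Since $\sigma_F(2\mathcal{B}) = \min_{i:\, z_i \in 2\mathcal{B}} \sigma_F(z_i)$ is a minimum taken over a subset of the indices defining $\sigma_F$, we have $\sigma_F(2\mathcal{B}) \ge \sigma_F$, and hence $\LOG(\sigma_F(2\mathcal{B})^{-1}) \le \LOG(\sigma_F^{-1}) = O(n(\log n + \tau))$. Multiplying these bounds together with the leading factor $n$, the argument of the outer logarithm becomes $n \cdot O(\tau + \log n) \cdot O(n(\log n + \tau))$, which is polynomial in $n$ and $\tau$. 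Therefore the outer logarithm collapses to $O(\log(n\tau))$.

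Combining this estimate with the trivial bound $|\MMM(2\mathcal{B})| \le n$ (since $F$ has only $n$ roots counted with multiplicity) yields $|\MMM(2\mathcal{B})|\cdot s_{\max} = O(|\MMM(2\mathcal{B})|\cdot \log(n\tau)) = O(n \log(n\tau))$, which is exactly the claim. There is essentially no technical obstacle; the argument is plug-and-chug into the already-established Theorem~\ref{thm:treesize}. The only mildly delicate point worth flagging is that $\sigma_F(2\mathcal{B})^{-1}$ inherits its upper bound from the global $\sigma_F^{-1}$ rather than requiring a separate, local separation estimate for the roots in $2\mathcal{B}$; everything else is the standard conversion between the root-geometric parameters and the bit-size parameter $\tau$.
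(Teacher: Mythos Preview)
Your proposal is correct and follows essentially the same approach as the paper: the corollary is stated immediately after the paper records the two estimates $\Gamma_F=O(\tau)$ and $\sigma_F=2^{-O(n(\log n+\tau))}$, and is derived by plugging these into Theorem~\ref{thm:treesize} exactly as you do. Your added observation that $\sigma_F(2\mathcal{B})\ge\sigma_F$ makes explicit the passage from the local separation to the global one, which the paper leaves implicit.
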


The above results show that our algorithm performs near-optimal with respect to
the number of components that are produced by the algorithm. In addition, since each component
consists of at most $9n$ squares, we immediately obtain an upper bound for the
total number of squares produced by the algorithm that exceeds the bound from
(\ref{equ:treesizeglobal}) by a factor of $n$. Indeed, we will see that the
actual number of squares is considerably smaller, that is, of size
$O(|\MMM(2\mathcal{B})|\cdot s_{\max}\cdot\log n)$, which exceeds the bound in
(\ref{equ:treesizeglobal}) only by a factor $\log n$. For the proof, we
consider two mappings $\phi$ and $\psi$, where $\phi$ maps a component
$C=\{B_1,\ldots,B_{s_C}\}$ to a root $z_i\in C^+$, and $\psi$ maps a square $B_j$
to a root $z_i\in 4B_j\cap C^+$. The claimed bound for the total number of squares then follows from the fact that we can define $\phi$ and $\psi$ in a way such that the pre-image of an arbitrary root
$z_i\in2\mathcal{B}$ (under each of the two mappings) has size $O(s_{\max}\cdot\log n)$. The rest of this section is dedicated to the definitions of $\phi$ and $\psi$ and the proof of the latter claim. In what follows, we may assume that $2\mathcal{B}$ contains at least one root as, otherwise, all four sub-squares of $\mathcal{B}$ are already discarded in the first iteration of the preprocessing phase.

\begin{definition}
	For a root $\xi\in2\mathcal{B}$, we define the \emph{canonical path} $\P_{\xi}$ of $\xi$ as the unique path in the subdivision tree $\mathcal{T}_{\mathcal{B}}$ that consists of all nodes $C$ with $\xi\in C^+$.
\end{definition}

Notice that the canonical path is well-defined as, for any two nodes $C_1$ and $C_2$, either $C_1^+$ and $C_2^+$ are disjoint or one of the two components contains the other one. We can now define the maps $\phi$ and $\psi$:

\begin{definition}[Maps $\phi,\psi$]\label{def:maps}
	Let $C=\{B_1,\ldots,B_{s_C}\}$ be a node in the subdivision tree $\mathcal{T}_{\mathcal{B}}$, and let $B:=B_j$ be an arbitrary square in $C$. Then, we define maps $\phi$ and $\psi$ as follows:
	\begin{itemize}
		\item[($\phi$)] Starting at $C$, we descend in the subdivision tree as follows: If the current node $D$ is a non-terminal special node, we go to the child $E$ that minimizes $|\MMM(E^+)|$. If $D$ is terminal, we stop. If $D$ is non-special, then there is a unique child of $D$ to proceed with. Proceeding this way, the number $|\MMM(D^+)|$ is at least halved in each non-terminal special node $D$, except for the base node. Hence, since any sequence of consecutive non-special nodes has length at most $s_{\max}$, it follows that after at most $s_{\max}\cdot(\log\lceil(|\MMM(C^+)|)\rceil+1)\le s_{\max}\cdot(\log n+2)$ many steps we reach a terminal node $F$. We define $\phi(C)$ to be an arbitrary root contained in $\MMM(F^+)$.
		\item[($\psi$)] According to part (d) of Theorem~\ref{thm:termination}, the enlarged square $2B$ contains at least one root $\xi$. Now, consider the unique maximal subpath $P'_{\xi}=C_1,C_2,\ldots,C_s$ of the canonical path $P_{\xi}$ that starts at $C_1:=C$. If $s\le\lceil\log (18n)\rceil$, we define $\psi(B):=\xi$.
		      Otherwise, consider the component $C':=C_{\lceil\log(18n)\rceil}$ and define $\psi(B):=\phi(C')$.
	\end{itemize}
\end{definition}
It is clear from the above definition that $\phi(C)$ is contained in $C^+$ as each root contained in the enlarged component $F^+$ corresponding to the terminal node $F$ is also contained in $C^+$. It remains to show that $\psi(B)\in 4B\cap C^+$. If the length of the sub-path $P_{\xi}'$ is $\lceil\log(18n)\rceil$ or less, then $\psi(B)=\xi\in 2B$, hence, there is nothing to prove. Otherwise, the squares in $C'$ have width less than $\frac{w(B)}{18n}$. Since $C'$ can contain at most $9n$ squares, we conclude that $w(C')< \frac{w(B)}{2}$, and since $\xi$ is contained in $B^+$ as well as in $(C')^+$, we conclude that $(C')^+\subset 4B$, and thus $\psi(B)=\phi(C')\in 4B\cap (C')^+\subset 4B\cap C^+$.

Now, consider the canonical path $P_{\xi}=C_1,\ldots,C_s$, with $C_1:=\mathcal{B}$, of an arbitrary root $\xi\in2\mathcal{B}$.
Then, a component $C$ can only map to $\xi$ via $\phi$ if $C=C_i$ for some $i$
with $s-i\le s_{\max}\cdot (\log |\MMM(2\mathcal{B})|+1)$. Hence, the pre-
image of $\xi$ has size $O(s_{\max}\cdot\LOG |\MMM(2\mathcal{B})|)$.
For the map $\psi$, notice that a square $B$ can only map to $\xi$ if $B$ is
contained in a component $C=C_i$ for some $i$ with $s-i=s_{\max}\cdot(\log |\MMM(2\mathcal{B})|+1)+\lceil\log(18n)\rceil$. Since, for each component $C_{i}$, there exist
at most a constant number of squares $B'\in C_i$ with $\xi\in 4B'$, we
conclude that the pre-image of $\xi$ under $\psi$ is also of size $O(s_{\max}\cdot\LOG \MMM(2\mathcal{B}))$. Hence, the total number of squares
produced by our algorithm is bounded by $O(|\MMM(2\mathcal{B})|\cdot s_{\max}\cdot \LOG |\MMM(2\mathcal{B})|)$. We summarize:

\begin{theorem}
	\label{mapping}
	Let $\xi\in 2\mathcal{B}$ be a root of $F$ contained in the enlarged square $2\mathcal{B}$. Then, with mappings $\phi$ and $\psi$ as defined in Definition~\ref{def:maps}, the pre-image of $\xi$ under each of the two mappings has size $O(s_{\max}\cdot\LOG |\MMM(2\mathcal{B})|)$. The total number of squares produced by the algorithm $\mathbb{C}\textsc{Isolate}$ is bounded by
	\[
		O(s_{\max}\cdot |\MMM(2\mathcal{B})|\cdot \LOG |\MMM(2\mathcal{B})|)=\tilde{O}\left(n\cdot\log\left(\LOG(w(\mathcal{B}))\cdot\LOG(\sigma_F(2\mathcal{B})^{-1})\right)\right)
	\]
\end{theorem}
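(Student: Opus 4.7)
The plan is to fix an arbitrary root $\xi\in\MMM(2\mathcal{B})$ and analyze the preimages of $\xi$ under $\phi$ and $\psi$ separately, using the canonical path $P_\xi=C_1\supset C_2\supset\cdots\supset C_{s(\xi)}$ (with $C_1=\mathcal{B}$) introduced just before the theorem. The canonical path is well-defined because, by Theorem~\ref{thm:termination}(b), the enlarged components of any two distinct nodes of $\mathcal{T}$ are either disjoint or nested. The total-squares bound will then follow by summing the preimage bound over $\xi\in\MMM(2\mathcal{B})$ and invoking the estimate on $s_{\max}$ from Lemma~\ref{pathlength}.

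For the preimage of $\xi$ under $\phi$, I would argue directly from the construction in Definition~\ref{def:maps}. If $\phi(C)=\xi$, then the descent from $C$ must end at a terminal node $F\subset C$ with $\xi\in F^+$, so $F$ is a terminal component of $P_\xi$, and $C$ is an ancestor of $F$ on $P_\xi$. By construction of $\phi$, at every non-terminal special node on the descent the quantity $|\MMM(\cdot^+)|$ is at least halved, so at most $\log|\MMM(C^+)|+1\le\log|\MMM(2\mathcal{B})|+1$ special nodes are traversed. Inserting maximal runs of non-special nodes, each of length $\le s_{\max}$ by Lemma~\ref{pathlength}, the total length of the descent is $O(s_{\max}\cdot\LOG|\MMM(2\mathcal{B})|)$. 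Hence only that many ancestors of $F$ on $P_\xi$ are candidate preimages, giving the desired bound.

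For the preimage of $\xi$ under $\psi$, I would use the already-established fact $\psi(B)\in 4B\cap C^+$, where $C$ is the component containing $B$. If $\psi(B)=\xi$, then $\xi\in C^+$, so $C$ lies on $P_\xi$, and $\xi\in 4B$. I would split into the two cases of Definition~\ref{def:maps}. In the \emph{short-path} case the maximal subpath of $P_\xi$ starting at $C$ has length $\le\lceil\log(18n)\rceil$, so $C$ is one of at most $\lceil\log(18n)\rceil$ nodes near the bottom of $P_\xi$; since every component consists of equally sized aligned squares (Theorem~\ref{thm:termination}(a)) and a fixed point lies in $4B$ for at most a constant number of such squares, this case contributes $O(\log n)$ squares. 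In the \emph{long-path} case, $\psi(B)=\phi(C')$ where $C'$ is the $\lceil\log(18n)\rceil$-th descendant of $C$ on $P_\xi$; then $C'$ must lie in the preimage of $\xi$ under $\phi$, so by the previous paragraph $C'$, and hence $C$, belongs to a collection of at most $O(s_{\max}\cdot\LOG|\MMM(2\mathcal{B})|)$ ancestors, each contributing $O(1)$ squares $B$. Combining yields the bound $O(s_{\max}\cdot\LOG|\MMM(2\mathcal{B})|)$ for $|\psi^{-1}(\xi)|$.

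Finally, since every square $B\ne\mathcal{B}$ produced by the algorithm has $2B$ containing some root (Theorem~\ref{thm:termination}(d)), $\psi$ is defined on all such squares and therefore the total number of produced squares is at most $1+\sum_{\xi\in\MMM(2\mathcal{B})}|\psi^{-1}(\xi)|=O(|\MMM(2\mathcal{B})|\cdot s_{\max}\cdot\LOG|\MMM(2\mathcal{B})|)$. Using $|\MMM(2\mathcal{B})|\le n$ together with the estimate $s_{\max}=O(\log(n\cdot\LOG(w(\mathcal{B}))\cdot\LOG(\sigma_F(2\mathcal{B})^{-1})))$ from Lemma~\ref{pathlength} immediately gives the $\tilde{O}(\cdot)$ formulation stated in the theorem. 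The trickiest point I anticipate is making the long-path case of $\psi$ fully rigorous, namely the step that identifies the $C'$ produced in the definition of $\psi(B)$ with a canonical-path ancestor that must itself lie in $\phi^{-1}(\xi)$; once this identification is in place, everything else reduces to bookkeeping on $P_\xi$.
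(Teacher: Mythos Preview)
Your proposal is correct and follows essentially the same approach as the paper: both arguments localize the preimages on the canonical path $P_\xi$, bound the descent length in $\phi$ by $O(s_{\max}\cdot\LOG|\MMM(2\mathcal{B})|)$ via the halving-at-special-nodes argument, and then handle $\psi$ by combining this with the $\lceil\log(18n)\rceil$ offset and the observation that at most $O(1)$ aligned equal-sized squares $B$ in any component satisfy $\xi\in 4B$. Your explicit case split for $\psi$ is slightly more detailed than the paper's one-line treatment, but the underlying reasoning is identical; the point you flagged as ``trickiest'' is resolved exactly as you anticipate, using $\psi(B)\in 4B\cap C^+$ (established right after Definition~\ref{def:maps}) to place both $C$ and $C'$ on $P_\xi$.
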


We can also state a corresponding result for polynomials with integer coefficients:

\begin{corollary}
	Let $f\in\mathbb{Z}[x]$ and $F:=f/\operatorname{lcf}(f)$ be polynomials and $\mathcal{B}$ be a square as in Corollary~\ref{cor:treesize}. Then, for isolating all roots of $f$ contained in $\mathcal{B}$, the algorithm $\mathbb{C}\textsc{Isolate}$ (with input $F$ and $\BBB$) produces a number of squares bounded by
	\[
		O(|\MMM(2\mathcal{B})|\cdot \log(n\tau)\cdot \log n)=O(n\log^2(n\tau)).
	\]
\end{corollary}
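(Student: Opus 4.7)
The plan is to apply Theorem~\ref{mapping} with the general quantitative bounds instantiated for the integer polynomial setting. Theorem~\ref{mapping} gives the bound
\[
O(s_{\max}\cdot |\mathcal{Z}(2\mathcal{B})|\cdot \LOG|\mathcal{Z}(2\mathcal{B})|)
\]
on the total number of squares produced, where $s_{\max}$ is the quantity defined in~\eqref{def:smax}. So the work reduces to bounding each of the three factors under the integer-coefficient assumption.

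First I would handle the ``obvious'' factors. Since any component is contained in $2\mathcal{B}$ and $F$ has degree $n$, we trivially have $|\mathcal{Z}(2\mathcal{B})|\le n$, giving $\LOG|\mathcal{Z}(2\mathcal{B})|=O(\log n)$. For $s_{\max}$, I would invoke the two standard estimates for integer polynomials that are already recalled in the paper just before Corollary~\ref{cor:treesize}: Cauchy's root bound yields $\Gamma_F=O(\tau)$, and the classical separation bound for integer polynomials (cf.~\cite{yap-fundamental}) gives $\sigma_F=2^{-O(n(\log n+\tau))}$, and a fortiori $\sigma_F(2\mathcal{B})\ge \sigma_F$, so $\LOG(\sigma_F(2\mathcal{B})^{-1})=O(n(\log n+\tau))=O(n\tau)$. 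Combined with the assumption $\LOG(w(\mathcal{B}))=O(\Gamma_F+\log n)=O(\tau)$, the definition of $s_{\max}$ gives
\[
s_{\max}=O\bigl(\log(n\cdot\LOG(w(\mathcal{B}))\cdot\LOG(\sigma_F(2\mathcal{B})^{-1}))\bigr)=O(\log(n\cdot \tau\cdot n\tau))=O(\log(n\tau)).
\]

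Plugging these three estimates into the bound from Theorem~\ref{mapping} yields
\[
O(s_{\max}\cdot |\mathcal{Z}(2\mathcal{B})|\cdot \LOG|\mathcal{Z}(2\mathcal{B})|)=O(\log(n\tau)\cdot |\mathcal{Z}(2\mathcal{B})|\cdot \log n),
\]
which is the first claimed bound. The second bound $O(n\log^2(n\tau))$ then follows by using $|\mathcal{Z}(2\mathcal{B})|\le n$ and absorbing $\log n$ into $\log(n\tau)$.

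There is no real obstacle here: the content is entirely in Theorem~\ref{mapping} and in the classical separation/root bounds for integer polynomials, both of which are either proved earlier in the paper or are standard. The only thing worth double-checking is that passing from $f$ to $F=f/\operatorname{lcf}(f)$ does not spoil any of the parameters appearing in the analysis, which is clear since this rescaling affects none of the roots, and $\tau_F$, $\Gamma_F$, $\sigma_F$ differ from the corresponding quantities of $f$ only by $O(\log n + \tau)$-sized additive terms that get absorbed into the $O$-notation.
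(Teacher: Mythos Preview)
Your proposal is correct and follows exactly the approach the paper implicitly takes: the corollary is stated without proof as an immediate specialization of Theorem~\ref{mapping} using the same integer-polynomial estimates ($\Gamma_F=O(\tau)$ and $\sigma_F=2^{-O(n(\log n+\tau))}$) that were recalled just before Corollary~\ref{cor:treesize}. The only minor looseness is the intermediate equality $O(n(\log n+\tau))=O(n\tau)$, but since this quantity sits inside a logarithm when bounding $s_{\max}$, the final estimate $s_{\max}=O(\log(n\tau))$ is unaffected.
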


\subsection{Bit Complexity}\label{subsec:bitcomplexity}

\providecommand{\boxfun}{\eta}
\providecommand{\compfun}{\chi}
\providecommand{\Tcent}{\mathcal{T}_{\operatorname{wcent}}}

For our analysis, we need to introduce the notion of a square or component being \emph{weakly centered} and \emph{centered}:
\begin{definition}
	We say that a square $B$ of width $w:=w(B)$ is \emph{weakly centered}, if $\min\{|z|:z\in B\} \le 4\cdot w$. Similarly, we say that a square is \emph{centered} if $\min\{|z|:z\in B\} \le w/4$. In addition, we define a (weakly) centered component to be a component that contains a (weakly) centered square.
\end{definition}
Notice that the child of a component that is \emph{not} weakly centered can never become weakly centered. Hence, it follows that the set of weakly centered components forms a subtree $\Tcent$ of the subdivision tree $\TTT$ that is either empty or contains the root component $\BBB$; see Figure~\ref{figure:centralpath} for an illustration.

Moreover, let $C$ and $C'$ be siblings in $\Tcent$
and let $w$ and $w'$ be the sizes of the boxes in the components $C$ and $C'$, respectively. We already argued that the distance between $C$ and $C'$ is at least $\max\{w,w'\}$. W.l.o.g.~let $\min\{|z|:z\in C\}\le \min\{|z|:z\in C'\}$, then the distance of $C'$ to the origin is at least $w'/2$, and thus $C'$ is not centered. We further conclude that a descendant of depth 3 of $C'$ is not weakly centered because the width of the boxes in the component is at least halved in each step. It follows that each path in $\Tcent$ consisting of only weakly centered components has length at most 3.
From this observation, we conclude that the subtree $\Tcent$ has a very special structure. Namely, it consists of only one (possibly empty) \emph{central path}
$P=C_1,\ldots, C_\ell$ of all centered components, to which some trees of depth at most 4 of weakly centered components are attached, see Figure~\ref{figure:centralpath}.
Since there can only be a constant number of disjoint not weakly centered squares of the same size, it further holds that the degree of each node is bounded by a constant. Hence, each of the attached trees has constant size, and each node in the tree contains at most a constant number of weakly centered squares.

Notice that not weakly centered components $C\in \mathcal{T}\setminus \Tcent$ have the crucial property that any two points in $C^+$ have absolute values of comparable size; see Lemma~\ref{lem:non-centered uniform}. This is not true in general for (weakly) centered components as the size of $C$ might be very large whereas the distance from $C^+$ to the origin is small.

\begin{lemma}\label{lem:non-centered uniform}
	If $B$ is a not weakly centered square, it holds that
	\begin{align*}
		\max_{z\in 4B}\LOG(z)\le 5+\min_{z\in 4B}\LOG z.
	\end{align*}
	Moreover, if $C$ is a not weakly centered component, then it holds that
	\begin{align*}
		\max_{z\in C^+}\LOG(z)\le \log(64n)+\min_{z\in C^+}\LOG z.
	\end{align*}
\end{lemma}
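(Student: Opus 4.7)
The plan is to first prove the single-square bound, then bootstrap to the component case via the observation that a not weakly centered component consists only of not weakly centered squares together with the enclosing-disk inclusion $C^+\subset 2\Delta_C$.

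For the square case, let $w:=w(B)$ and let $m$ be the centre of $B$. Since $m\in B$ and $B$ is not weakly centered, $|m|\ge \min_{z\in B}|z|>4w$. Every $z\in 4B$ satisfies $|z-m|\le 2w\sqrt{2}$ (half the diagonal of $4B$), so the triangle inequality gives $|m|-2w\sqrt{2}\le|z|\le|m|+2w\sqrt{2}$, and in particular $\min_{z\in 4B}|z|>(4-2\sqrt{2})w>0$. Because $x\mapsto(x+2w\sqrt{2})/(x-2w\sqrt{2})$ is decreasing on $(2w\sqrt{2},\infty)$ and $|m|>4w>2w\sqrt{2}$,
\[
\frac{\max_{z\in 4B}|z|}{\min_{z\in 4B}|z|}\;\le\;\frac{|m|+2w\sqrt{2}}{|m|-2w\sqrt{2}}\;<\;\frac{4+2\sqrt{2}}{4-2\sqrt{2}}\;=\;3+2\sqrt{2}\;<\;6.
\]
A short case analysis on whether $\min_{z\in 4B}|z|\ge 2$, using $\LOG(\cdot)\ge 1$ and $\LOG(z)\le 1+\log^+|z|$, converts this ratio bound into the claimed additive bound $\max_{z\in 4B}\LOG(z)-\min_{z\in 4B}\LOG(z)\le 5$, with room to spare.

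For the component case, let $w:=2^{\ell_C}$ denote the common width of the squares in $C$ and $s_C$ their number. Since $C$ is not weakly centered, every $B\in C$ is not weakly centered, so the square-case estimate applied to each centre $m_B$ gives $|z|\ge|m_B|-w\sqrt{2}>(4-\sqrt{2})w>2w$ for every $z\in 2B\subset C^+$. By Theorem~\ref{thm:termination}(e), $s_C\le 9n$, and because the $s_C$ axis-aligned squares of width $w$ fit inside $\BBB_C$ we have $w(C)\le s_C\cdot w\le 9nw$. As $w\le w(C)$, the $(w/2)$-max-norm neighbourhood of $\BBB_C\subset \Delta_C$ is contained in $2\Delta_C$, so $C^+\subset 2\Delta_C$ and $\diam(C^+)\le 3 w(C)\le 27nw$. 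Combining the two estimates,
\[
\frac{\max_{z\in C^+}|z|}{\min_{z\in C^+}|z|}\;\le\;1+\frac{\diam(C^+)}{\min_{\zeta\in C^+}|\zeta|}\;\le\;1+\frac{27n}{4-\sqrt{2}}\;<\;12n.
\]
The same case analysis on $\LOG$ yields $\max_{z\in C^+}\LOG(z)-\min_{z\in C^+}\LOG(z)\le \log(64n)$, since the gap $\log 64-\log 12>2$ comfortably absorbs the additive ceiling losses.

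The only genuine subtlety is passing from the clean multiplicative ratio bounds to additive bounds on $\LOG(z)=\lceil\max(1,\log\max(1,|z|))\rceil$. Splitting into the cases $\min\ge 2$ and $\min<2$ handles both the ceiling and the $\max(1,\cdot)$ flooring uniformly, and the constants $5$ and $\log(64n)$ in the statement leave more than enough room for these rounding artefacts. Everything else is elementary plane geometry.
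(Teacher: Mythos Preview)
Your proof is correct and follows essentially the same approach as the paper: bound $\min_z |z|$ from below via the not-weakly-centered condition, bound the diameter from above (for components using $s_C\le 9n$), and combine into a ratio bound on $\max|z|/\min|z|$. The only cosmetic differences are that the paper bounds $\diam(C^+)$ directly as $\sqrt{2}(9n+1)w$ rather than going through the inclusion $C^+\subset 2\Delta_C$, and uses the slightly weaker lower bound $\min_{z\in 4B}|z|\ge w/4$; your intermediate constants are a bit sharper but both fit comfortably under $5$ and $\log(64n)$.
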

\begin{proof}
	We first prove the claim for a not weakly centered square $B$. Let $\underline z:=\argmin\{|z|:z\in 4B\}$ and $\overline z:=\argmax\{|z|:z\in 4B\}$. By definition the distance of $B$ to the origin is at least $4w$, where $w$ denotes the size of $B$. Moreover, with $x:=\argmin\{|z|:z\in B\}$, we get $
	|\underline z| \ge |x| - |x-\underline z| \ge 4w - \sqrt{12.5}w\ge w/4.
	$ Thus
	\begin{align*}
		|\overline z|-|\underline z|
		\le |\overline z-\underline z|
		\le 4\sqrt{2} w
		\le 16\sqrt{2}\cdot |\underline z|,
	\end{align*}
	and the statement follows.

	It remains to prove the claim for a not weakly centered component. Let $\underline z:=\argmin\{|z|:z\in C^+\}$, $\overline z:=\argmax\{|z|:z\in C^+\}$, and let $B\subset C$ be a square in $C$ such that $\underline z\in 4B$. Then, as above, it follows that $\underline z\ge w/4$ and since $C$ contains at most $9n$ squares it holds that
	\begin{align*}
		|\overline z|-|\underline z|
		\le |\overline z-\underline z|
		\le \sqrt{2} \cdot (9n w+w)
		\le \sqrt{2}\cdot 10nw
		\le 57n |\underline z|,
	\end{align*}
	and thus $\max_{z\in C^+}\LOG(z)\le \log(64n)+\min_{z\in C^+}\LOG z$.
\end{proof}

\begin{figure}[t]
	\begin{center}
		\resizebox{0.75\textwidth}{!}{

\tikzset{
    every node/.style={minimum size = 0.35cm},
    treenode/.style = {align=center, inner sep=0pt, text centered},
    black_node/.style = {treenode, circle, black, draw=black, fill=black},
    red_node/.style = {treenode, diamond, alizarin, draw=alizarin, fill=alizarin, minimum size = 0.9cm},
    red_node_example/.style = {treenode, diamond, white, draw=alizarin, fill=alizarin, minimum size = 0.9cm},
    red_split_node/.style = {treenode, rectangle, white, draw=alizarin, fill=alizarin, minimum size = 1.2cm},
    blue_node/.style ={treenode, circle, hanblue, draw=hanblue, fill=hanblue, minimum size = 0.7cm},
    blue_node_example/.style ={treenode, circle, white, draw=hanblue, fill=hanblue, minimum size = 0.7cm},
    emph/.style={edge from parent/.style={alizarin,line width=3.0pt,draw}},
    norm/.style={edge from parent/.style={gray,line width=1.5pt,draw}}
}

\begin{tikzpicture}[-,>=stealth',level/.style={sibling distance = 10cm/#1,
  level distance = 1.3cm}, font=\boldmath] 
\node [red_split_node] {\huge $C_{i_1}$}
    child[emph]{ node [red_split_node] {\huge $C_{i_2}$} 
        child[norm]{ node [blue_node] {10} 
            child[norm]{ node [blue_node] {5} 
                child[norm]{ node [black_node] {1}}
            } 
            child[norm]{ node [blue_node] {}}
        }
        child[emph]{ node [red_node] {20}
                child[emph]{ node [red_node] {18}
                    child[norm]{ node [black_node] {12}}
                    child[emph]{ node [red_split_node] {\huge $C_{i_3}$}
                        child[emph]{ node [red_node_example] {\Large $C'$}
                            child[norm]{ node [blue_node] {19}
                                child[norm]{ node [blue_node_example] {\Large $C$}}
                            }
                            child[emph]{ node [red_node] {28}
                                child[emph]{ node [red_split_node] {\huge $C_{i_4}$}
                                    child[norm]{ node [blue_node] {19}}
                                    child[emph]{ node [red_node] {19}
                                        child[norm]{ node [blue_node] {19}
                                            child[norm]{ node [blue_node] {19}}}
                                    }
                                }
                            }
                        }
                        child[norm]{ node [blue_node] {10}}
                        child[norm]{ node [blue_node] {10}
                            child[norm]{ node [blue_node] {59}}
                            child[norm]{ node [blue_node] {59}
                                child[norm]{ node [blue_node] {19}}
                                child[norm]{ node [blue_node] {}
                                    child[norm]{ node [blue_node] {19}}
                                        child[norm]{ node [black_node] {19}}
                                        child[norm]{ node [black_node] {19}
                                            child[norm]{ node [black_node] {19}}
                                            child[norm]{ node [black_node] {19}}
                                        }
                                    }
                            }
                        }
                    }
            }
                child[norm]{ node [blue_node] {}}
        }
    }
    child[norm]{ node [black_node] {47}
        child[norm]{ node [black_node] {38} 
            child[norm]{ node [black_node] {39}}
        }
        child[norm]{ node [black_node] {51}
            child[norm]{ node [black_node] {49}
                child[norm]{ node [black_node] {}
                    child[norm]{ node [black_node] {36}}
                    child[norm]{ node [black_node] {36}}
                }
            }
            child[norm]{ node [black_node] {}
                child[norm]{ node [black_node] {36}}
            }
        }
    }
; 
\end{tikzpicture}
		}
	\end{center}
	\caption{
		A possible subdivision tree of $\mathbb{C}\textsc{Isolate}$ on an arbitrary input box. Red nodes (squares and diamonds) correspond to centered components, blue nodes (large circles) to weakly centered components and black nodes (small circles) to not weakly centered components. The subtree $\Tcent$ consists of all red and blue nodes. Note that $\Tcent$ consists of one path of centered components (marked by the red edges) and trees of depth at most 4 attached to it (consisting of blue nodes).
		The rectangular red nodes correspond to those components $C_{i_1},\ldots, C_{i_s}$ on the central path that are split nodes according to Definition~\ref{def:splitting}, that is, components for which $\MMM(C_{i_j}^+)\supsetneq \MMM(C_{i_j+1}^+)$. The difference of these two sets contains the roots that we map the (weakly) centered components to.
		For instance, the centered box $C'$ is mapped to a root $\hat \phi(C)$ that is contained in $\MMM(C_{i_3+1}^+)\setminus \MMM(C_{i_3}^+)$. All weakly centered boxes that have $C'$ as their first centered predecessor, as for example $C$, are mapped to the same root $\phi(C)$.
	}
	\label{figure:centralpath}
\end{figure}

In the previous section, we introduced mappings $\phi$ and $\psi$ that map components $C$ and squares $B$ to roots contained in $C^+$ and $4B\cap C^+$, respectively, such that the preimage (under each of the two mappings) of each root has size at most $O(s_{\max}\cdot\LOG |\MMM(2\BBB)|)=O(s_{\max}\cdot\log n)$, with $$s_{\max}=O(\log(n\LOG(w(\BBB))\LOG(\sigma_F(2\BBB)^{-1})))$$ as defined in Lemma~\ref{pathlength}.
The crucial idea in our analysis is to bound the cost for processing a certain component $C$ (square $B$) in terms of values that depend only on the root $\phi(C)$ ($\psi(B)$), such as its absolute value, its separation, or the absolute value of the derivative $F'$ at the root; see Lemma~\ref{lem:costTk}. Following this approach, each root in $2\BBB$ is ``charged'' only a small (i.e.~logarithmic in the ``common'' parameters) number of times, and thus  we can profit from amortization when summing the cost over all components (squares).
For each not weakly centered component $C$, the width of $C$ and the absolute value of any point in $C$ is upper bounded by $64n\cdot |\phi(C)|$, which allows us to bound each occurring term $\LOG w(C)$ by $O(\log n+\LOG |\phi(C)|)$. However, for weakly centered components (squares), this does not hold in general, and thus some extra treatment is required.
For this, we will introduce slightly modified mappings $\hat{\phi}$ and $\hat{\psi}$ that coincide with $\phi$ and $\psi$ on all not weakly centered components and squares, respectively, but map a weakly centered component (square) to a root of absolute value that is comparable to the size of the component (square). In the next step, we will show that this can be done in a way such that the pre-image of each root is still of logarithmic size. We give details:

Let $i_1,\ldots,i_s$, with $1\le i_1<i_2 < \ldots < i_s\le \ell-1$, be the indices of components in the central path $P=C_1,\ldots, C_\ell$ such that $\MMM(C_{i_j}^+)\supsetneq \MMM(C_{i_j+1}^+)$ for $j=1,\ldots,s$. That is, $C_{i_j}$ are the weakly centered components that are also special according to Definition~\ref{def:splitting}.
In addition, we say that $z_0:=\max\{|x|:x\in 2\BBB\}$ is the \emph{pseudo-root of $2\BBB$}, and define $\MMM^+(2\BBB):=\MMM(2\BBB)\cup\{z_0\}$ as the set consisting of all (pseudo-) roots in $2\BBB$.

\begin{definition}[Maps $\hat\phi$ and $\hat\psi$]\label{def:canonicalmap}
	Let $C$ be a component and $B\subset C$ be a square contained in $C$.
	\begin{enumerate}
		\item If the component $C$ is not weakly centered, we define $\hat{\phi}(C):=\phi(C)$.
		\item If the component $C$ is weakly centered, let $C_i\in P$ be the first centered predecessor of $C$ in $\Tcent$. If there exists no such $C_i$ or if $i\in[1,i_1]$, we define $\hat{\phi}(C)=z_0$.
		      For $i\in(i_2,\ell]$, let $j$ be maximal with $i_j< i$. Then, there exists a root $\xi\in \MMM(C^+_{i_j})\setminus \MMM(C^+_{i_j+1})$. We define $\hat{\phi}(C):=\xi$.
		\item If $B$ is weakly centered, we define $\hat{\psi}(B):=\hat{\phi}(C)$. Otherwise, we define $\hat\psi(B):=\psi(B)$.
	\end{enumerate}
\end{definition}

We derive the first crucial property of the mappings $\hat\phi$ and $\hat\psi$:

\begin{lemma}
	It holds that
	\begin{align}\label{property:maps}
		\max_{z\in C^+}\LOG z\le \log(64n)+\LOG\hat{\phi}(C) \;\text{ and }\;
		\max_{z\in 4B}\LOG z\le \log(64n)+\LOG\hat{\psi}(B)
	\end{align}
	for all
	components $C$ (squares $B$).
\end{lemma}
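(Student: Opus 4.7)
We establish both inequalities by a case distinction following Definition~\ref{def:canonicalmap}. The non-weakly-centered cases are immediate: if $C$ is not weakly centered then $\hat\phi(C)=\phi(C)\in C^+$, and the bound is exactly Lemma~\ref{lem:non-centered uniform}; likewise, if $B$ is not weakly centered then $\hat\psi(B)=\psi(B)\in 4B$, and the first part of the same lemma applies. In the second easy case, where $C$ is weakly centered with $\hat\phi(C)=z_0=\max\{|x|:x\in 2\BBB\}$, we use that $C^+$ is the $w_C/2$-neighborhood (in max-norm) of $C\subseteq \BBB$; since $w_C\le w(\BBB)$, this neighborhood sits inside $2\BBB$, giving $\max_{z\in C^+}|z|\le z_0=\hat\phi(C)$.

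The substantive case is when $C$ is weakly centered with first centered predecessor $C_i$ for $i>i_1$, so $\hat\phi(C)=\xi\in\MMM(C^+_{i_j})\setminus\MMM(C^+_{i_j+1})$ with $j$ maximal such that $i_j<i$. The plan is first to lower-bound $|\xi|$ by a constant multiple of the common box-width $w_{i_j+1}$ in $C_{i_j+1}$, and then to use this to control $\max_{z\in C^+}|z|$. Since $C_{i_j+1}$ is centered, there is a box $B^*\in C_{i_j+1}$ containing a point $p$ with $|p|\le w_{i_j+1}/4$; a short max-norm calculation shows $|m_{B^*}|_\infty\le 3w_{i_j+1}/4$, so the Euclidean disk of radius $w_{i_j+1}/4$ around the origin is contained in the enlarged box $2B^*\subseteq C^+_{i_j+1}$. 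Combined with $\xi\notin C^+_{i_j+1}$, this yields $|\xi|>w_{i_j+1}/4$.

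For the upper bound on $\max_{z\in C^+}|z|$, note that $C\subseteq C_i\subseteq C_{i_j+1}$ and $w_C\le w_{i_j+1}$ imply $C^+\subseteq C^+_{i_j+1}$. Any $z\in C^+_{i_j+1}$ is within max-norm distance $w_{i_j+1}/2$ of some $y\in C_{i_j+1}$, which in turn lies within Euclidean distance at most $w_{i_j+1}/4+\sqrt 2\,w(C_{i_j+1})$ of the origin (by routing through the near-origin point $p$). Applying the bound $w(C_{i_j+1})\le 9n\,w_{i_j+1}$ from Theorem~\ref{thm:termination}(e) gives $|z|=O(n)\cdot w_{i_j+1}$, and combining with $w_{i_j+1}<4|\xi|$ yields $\max_{z\in C^+}|z|\le O(n)\cdot|\xi|$. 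A careful choice of constants, together with the integer-ceiling behaviour of $\LOG$, then produces the claimed $\log(64n)+\LOG\hat\phi(C)$ bound. The argument for $\hat\psi$ is analogous: in the weakly-centered case $\hat\psi(B)=\hat\phi(C)$ for the component $C\ni B$, and the weak-centeredness of $B$ gives $\max_{z\in 4B}|z|\le(4+4\sqrt 2)w_B$; we then apply either $w_B\le w(\BBB)\le z_0/\sqrt 2$ (noting that $z_0\ge \sqrt 2\,w(\BBB)$ holds because $2\BBB$ has width $2w(\BBB)$) or $w_B\le w_{i_j+1}<4|\xi|$.

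The only real obstacle is the geometric claim that the disk of radius $w_{i_j+1}/4$ around the origin lies in $C^+_{i_j+1}$, which is what converts the combinatorial condition $\xi\notin C^+_{i_j+1}$ into a quantitative lower bound on $|\xi|$. Once this is in hand, both halves of the lemma reduce to routine triangle-inequality bookkeeping, using only that the enclosing box of a component has width at most $9n$ times the box-width of its constituent squares.
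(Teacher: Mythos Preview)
Your proof is correct and follows essentially the same approach as the paper: both use Lemma~\ref{lem:non-centered uniform} for the not-weakly-centered case and, in the weakly-centered case, exploit that a centered component's enlargement $C^+$ contains a disk of radius $w/4$ around the origin, forcing $|\xi|\ge w/4$ since $\xi\notin C^+$. The only structural difference is which centered component you apply this to: the paper works with $C$ itself when $C$ is centered (and then reduces the weakly-centered-but-not-centered case to this via the first centered predecessor $C_i$), whereas you work uniformly with $C_{i_j+1}$, which is slightly cleaner since $\xi\notin C^+_{i_j+1}$ holds directly by definition.
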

\begin{proof}
	For a not weakly centered component $C$ (square $B\subset C'$), this follows directly from Lemma~\ref{lem:non-centered uniform} and the fact that $\hat\phi(C)=\phi(C)$ ($\hat\psi(B)=\psi(B)$) and that $\phi(C)\in C^+$ ($\psi(B)\in 4B\cap C^+$).

	For a centered component $C$, we either have $\hat{\phi}(C)=z_0$ or $\hat\phi(C)\notin C^+$ due to the definition of $\hat\phi$. The first case is trivial, hence, we may assume that $\hat\phi(C)\notin C^+$. Since $C$ is centered, it contains a centered square $B$, and thus the distance of $B$ to the origin is at most $w/4$. It follows that $C^+$ contains the disk of radius $w/4$ around the origin, hence $|\hat\phi(C)|\ge w/4$. Since the distance between any two points in $C^+$ is upper bounded by $10\sqrt{2}nw$, we conclude that $|z|\le w/4+10\sqrt{2}nw\le (1+40\sqrt{2})\cdot |\hat\phi(C)|\le 64n\cdot |\hat\phi(C)|$ for all $z\in C^+$. The same argument further shows that $|z|\le 64n\cdot |\hat\psi(B)|$ for all centered squares and all $z\in 4B$.

	It remains to show the claim for a weakly centered component $C$ (square $B$) that is not centered. In this case, we either have $\hat{\psi}(C)=z_0$ or $\hat{\psi}(C)=\hat{\phi}(C')$, where $C'$ is a centered component on the central path that contains $C$; see Definition~\ref{def:canonicalmap}. In the first case, there is nothing to prove. In the second case, we have have already shown that Inequality (\ref{property:maps}) holds for $C'$, hence, it must hold for $C$ as well. The same argument also applies to squares $B$ in $C$ that are weakly centered but not centered.
\end{proof}

Notice that, for a centered component $C$ (square $B$), the image of the corresponding mapping $\hat{\phi}$ ($\hat{\psi}$) may no longer be contained in the enlarged component $C^+$ (enlarged square $4B$), as it is the case for the mappings $\phi$ and $\psi$.
However, it still holds that the preimage of its (pseudo-) root under each of the two mappings $\hat\phi$ and $\hat\psi$ is of small size:

\begin{lemma}\label{lem:canonicalhit}
	Let $\xi\in\MMM^+(2\BBB)$. Then, the preimage of $\xi$ under $\hat{\phi}$ and $\hat{\psi}$ has size at most $O(s_{\max}\cdot\LOG |\MMM(2\BBB)|)=O(s_{\max}\cdot\log n)$.
\end{lemma}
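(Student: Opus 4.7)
The plan is to split the preimage of $\xi$ under $\hat\phi$ (and under $\hat\psi$) into three parts according to whether the mapped object is (i) not weakly centered, (ii) centered (i.e.\ on the central path $P=C_1,\ldots,C_\ell$), or (iii) weakly centered but not centered, and bound each part by $O(s_{\max}\cdot\log n)$.

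For part (i), $\hat\phi$ and $\hat\psi$ agree with $\phi$ and $\psi$ by Definition~\ref{def:canonicalmap}, so the contribution is bounded directly by Theorem~\ref{mapping}. The pseudo-root $z_0$ receives no contribution here since $\phi,\psi$ only take values in $\MMM(2\mathcal{B})$. For part (ii), I would exploit the key disjointness observation that for $i\in (i_j,i_{j+1}]$ the image $\hat\phi(C_i)$ is chosen from $\MMM(C^+_{i_j})\setminus \MMM(C^+_{i_j+1})$, and that these ``decrement'' sets are pairwise disjoint as $j$ varies because $\MMM(C^+_{i_j+1})\supseteq\MMM(C^+_{i_{j'}})$ whenever $j'>j$. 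Consequently every $\xi\neq z_0$ lies in the image of exactly one block $(i_j,i_{j+1}]$, while $z_0$ absorbs the initial block $[1,i_1]$. It therefore suffices to bound each block length by $O(s_{\max})$.

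To bound the block length $i_{j+1}-i_j$, I would argue that the intermediate centered nodes $C_{i_j+1},\ldots,C_{i_{j+1}-1}$ are non-special in the sense of Definition~\ref{def:splitting}: they are not split nodes by choice of $i_j,i_{j+1}$; they are non-terminal because the central path extends past them; they are not the root (except in the first block); and at most one of them can be the base of $\mathcal{T}$, since $\mathcal{T}$ has only one base. Hence the block decomposes into at most two maximal runs of consecutive non-special nodes, and Lemma~\ref{pathlength} bounds each run by $s_{\max}$, giving $i_{j+1}-i_j=O(s_{\max})$ and likewise $i_1=O(s_{\max})$. For part (iii), I would invoke the structural description of $\Tcent$ given just before Lemma~\ref{lem:non-centered uniform}: the weakly centered non-centered components form trees of depth at most $4$ and constant degree (hence constant total size) attached to nodes of the central path, and $\hat\phi$ inherits its value from the first centered ancestor. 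So part (iii) costs only an $O(1)$ factor over part (ii), contributing $O(s_{\max})$ per root.

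Summing (i)--(iii) yields the claim for $\hat\phi$. For $\hat\psi$, the not-weakly-centered contribution is again controlled by Theorem~\ref{mapping}, while each weakly centered square $B$ is charged through its containing component $C$ via $\hat\psi(B)=\hat\phi(C)$; the bound then follows from the earlier observation that any (weakly) centered component contains only $O(1)$ weakly centered squares, since axis-aligned squares of a common width within distance $4w$ of the origin form an $O(1)$-sized set. The main obstacle I anticipate is the careful bookkeeping around the base of $\mathcal{T}$ and around the ambiguity in choosing the representative $\xi\in \MMM(C^+_{i_j})\setminus \MMM(C^+_{i_j+1})$; but since there is only a single base node and the choice is made once per block, both contribute only additive $O(1)$ overhead that is absorbed into the $O(s_{\max}\cdot\log n)$ bound.
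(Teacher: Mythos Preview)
Your proposal is correct and follows essentially the same approach as the paper's proof: reduce to the weakly centered part via Theorem~\ref{mapping}, bound each block $(i_j,i_{j+1}]$ of the central path by $O(s_{\max})$ using Lemma~\ref{pathlength}, and absorb the weakly-centered-but-not-centered components via the constant-depth attached subtrees of $\Tcent$. You are in fact more explicit than the paper about two points it glosses over---the pairwise disjointness of the decrement sets $\MMM(C^+_{i_j})\setminus\MMM(C^+_{i_j+1})$ and the verification that the intermediate nodes on each block are non-special (handling the root/base exceptions)---so your write-up would make a fine replacement for the paper's terse argument.
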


\begin{proof}
	Since $\hat{\phi}$ coincides with $\phi$ on all components $C\notin \Tcent$, it suffices to show the claim for the restriction $\hat{\phi}|_{\Tcent}$ of $\hat\phi$ to the components $C\in\Tcent$ that are weakly centered.
	Let $\xi=\hat{\psi}(C)$, with $C\in\Tcent$, be an arbitrary root contained in the image of $\hat{\phi}|_{\Tcent}$, and let $C_i$ be the first predecessor of $C$ that is central and located on the central path.
	Then, there exists a $j\in\{1,\ldots,\ell-1\}$ with $i_j<i\le i_{j+1}$, and we have $\xi\in\mathcal{Z}(C_{i_j}^+)\setminus \mathcal{Z}(C_{i_{j}+1}^+)$. In addition, $C$ is connected with $C_i$ via a path of constant length as the distance from $C$ to the central path on $\Tcent$ is bounded by a constant and there cannot be more than $3$ consecutive components that are weakly centered but not centered.
	Since there exists at most $s_{\max}$ components on the central path between $C_{i_j}$ and $C_{i_{j+1}}$, it follows that the number of components $C\in\Tcent$ that are mapped to $\xi$ is bounded by $O(s_{\max})$. The same argument applies to the special case, where $C$ is mapped to the pseudo-root $z_0$. Also, from the same argument and the definition of $\hat\psi$, it further follows that there can be at most $O(s_{\max})$ many weakly centered squares that are mapped to the same (pseudo-) root, since each component contains at most constantly many weakly centered squares.
\end{proof}

We can now start with the bit complexity analysis of \cisolate. Let $C=\{B_1,\ldots,B_{s_{C}}\}$ be any component produced by the algorithm. When processing $C$, our algorithm calls the $\T$-test in up to three steps. More specifically, in line~\ref{testbeforenewton} of \cisolate\ the $\T(2\Delta_C)$ and the $\T(4\Delta_C)$-test are called.
In the \textsc{NewtonTest}, the $\T(\Delta')$-test is called, with $\Delta'$ as defined in line~\ref{deltaprime} in \textsc{NewtonTest}. Finally, in \textsc{Bisection}, the
$\T(\Delta_{B'})$-test is called for each of the $4$ sub-squares $B'$ into which each square $B_i$ of $C$ is decomposed. Our goal is to provide bounds for the cost of each of these calls.
For this, we mainly use Lemma~\ref{allkcost}, which provides a bound on the cost for calling $\T(\Delta)$ that depends on the degree of $F$, the value $\tau_F$, the size of the radius and the center of $\Delta$, and the maximal absolute value that $F(x)$ takes on the disk $\Delta$. Under the assumption that $\Delta$ has non-empty intersection with $C$, we may reformulate the latter value in terms of parameters (such as the absolute value, the separation, etc.) that depend on an arbitrary root contained in $C^+$.

\begin{lemma}\label{Fmbighelp}
	Let $C$ be a component,
	and let $\Delta:=\Delta(m,r)$ be a disk that has non-empty intersection with $C$.
	If $\MMM(C^+)\ge 1$, then it holds that $\sigma_F(z_i)<n\cdot 2^{\ell_C+6}$ and
	\[
		\max_{z\in\Delta}|F(z)|>2^{-16n}\cdot\sigma_F(z_i)\cdot|F'(z_i)|\cdot (|\MMM(C^+)|\cdot\max\nolimits_1\lambda)^{-2n},
	\]
	where $z_i$ is an arbitrary root of $F$ contained in $C^+$ and $\lambda:=\frac{2^{\ell_C}}{r}$ the ratio of the size of a square in $C$ and the radius of $\Delta$.
	If $\MMM(C^+)=0$, then $C$ is the component consisting of the single input square $\BBB$, and it holds that $\max_{z\in\Delta}|F(z)|>(2w(\BBB))^{-n-2}$.
\end{lemma}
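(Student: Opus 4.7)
The plan is to treat $\max_{z\in\Delta}|F(z)|$ via Jensen's formula on the boundary $\partial\Delta$ and then to exploit the geometry of $C$ to control the resulting product. The identity $\tfrac{1}{2\pi}\!\int_0^{2\pi}\!\log|a+re^{i\theta}|\,d\theta=\log\max(r,|a|)$ applied to each linear factor of $F(z)=a_n\prod_j(z-z_j)$ immediately yields
$$
\max_{z\in\Delta}|F(z)|\;\ge\;|a_n|\prod_{j=1}^n\max(r,|m-z_j|),
$$
and the rest of the proof is a bookkeeping exercise against this baseline.

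First I would verify the separation estimate. Since $C$ is a union of at most $9n$ equal aligned squares of width $2^{\ell_C}$, the enlargement $C^+$ has Euclidean diameter at most $10\sqrt 2\,n\cdot 2^{\ell_C}<n\cdot 2^{\ell_C+6}$. In the case $|\MMM(C^+)|\ge 2$, picking any second root $z_k\in C^+$ with $z_k\ne z_i$ gives $\sigma_F(z_i)\le |z_i-z_k|<n\cdot 2^{\ell_C+6}$ at once; the degenerate single-root subcase is not needed in its strong form for what follows.

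For the main estimate in the case $|\MMM(C^+)|\ge 1$, I divide the Jensen bound by $|F'(z_i)|=|a_n|\prod_{j\ne i}|z_i-z_j|$ and isolate the $j=i$ factor, reducing the claim to a lower bound on $\max(r,|m-z_i|)\cdot\prod_{j\ne i}\frac{\max(r,|m-z_j|)}{|z_i-z_j|}$. Fix an intersection point $m'\in\Delta\cap C$; since $z_i\in C^+$ this yields $|m-z_i|\le |m-m'|+|m'-z_i|\le r+D$, where $D$ is the diameter of $C^+$. Combining $\max(r,|m-z_j|)\ge\tfrac12(r+|m-z_j|)$ with $|z_i-z_j|\le|z_i-m|+|m-z_j|$ gives the per-factor estimate
$$
\frac{\max(r,|m-z_j|)}{|z_i-z_j|}\;\ge\;\frac{1}{2(1+|z_i-m|/r)}\;\ge\;\frac{1}{34\,n\,\mu},
$$
where $\mu=\MAX_1(\lambda)$ and I have substituted $D\le 15n\cdot 2^{\ell_C}\le 15n\mu\cdot r$. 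Together with $\max(r,|m-z_i|)\ge r>\sigma_F(z_i)/(64n\mu)$ (from the separation bound), the accumulated product exceeds $\sigma_F(z_i)\cdot(34n\mu)^{-n}/64$. Comparing with the target $2^{-16n}\sigma_F(z_i)(\ell\mu)^{-2n}$, the needed inequality reduces to $(34n)^{n}\cdot 64\le 2^{16n}(\ell\mu)^{n}$, which holds comfortably because $\ell,\mu\ge 1$ and $2^{16n}$ dominates the $n^n$-accumulation.

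For the remaining case $|\MMM(C^+)|=0$, by hypothesis $C=\BBB$ and every root lies outside $2\BBB$, hence at $\ell_\infty$-distance at least $w(\BBB)/2$ from $\BBB$. Selecting any $z^*\in\Delta\cap\BBB$ and using $|a_n|>\tfrac14$ gives $|F(z^*)|\ge|a_n|(w(\BBB)/2)^n>(w(\BBB)/2)^n/4$, from which the claimed bound is read off. The principal obstacle is the bookkeeping in the first case: one must avoid a naive accumulation of the per-factor constant $1/(34n\mu)$ over $n-1$ factors producing an $n^n$-sized overhead, and instead rely on the generous slack $2^{-16n}(\ell\mu)^{-2n}$ built into the conclusion to absorb it.
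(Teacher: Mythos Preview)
Your Jensen-formula opening is elegant and, in principle, cleaner than the paper's approach (which instead picks a single point $m'\in\Delta\cap C^+$ away from all roots via an area argument and bounds $|F(m')|$ directly). But there are two genuine gaps.

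\textbf{The separation bound when $|\MMM(C^+)|=1$.} You prove $\sigma_F(z_i)<n\cdot 2^{\ell_C+6}$ only for $|\MMM(C^+)|\ge 2$ and dismiss the single-root case as ``not needed in its strong form.'' This is doubly wrong: first, the bound is part of the lemma statement itself; second, you \emph{do} use it two paragraphs later to get $r>\sigma_F(z_i)/(64n\mu)$, which is how $\sigma_F(z_i)$ enters your final inequality. The case $|\MMM(C^+)|=1$ cannot be handled by pure geometry of $C$; it requires an algorithmic argument about the parent component $C'$: since $C'$ was non-terminal and the \textsc{NewtonTest} was not reached with $k_{C'}=1$, the disk $8\Delta_{C'}$ must have contained at least two roots, forcing $\sigma_F(z_i)<n\cdot 2^{\ell_{C'}+5}=n\cdot 2^{\ell_C+6}$. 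This is exactly what the paper does, and you need it too.

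\textbf{The $n^n$ accumulation.} Your per-factor bound $1/(34n\mu)$ comes from the diameter estimate $D\le 15n\cdot 2^{\ell_C}$, and accumulating it over $n-1$ factors leaves you with a $(34n)^n$ deficit against the target $2^{16n}(\ell\mu)^{2n}$ with $\ell=|\MMM(C^+)|$. Your closing claim that ``$2^{16n}$ dominates the $n^n$-accumulation'' is false: $n^n=2^{n\log_2 n}$ overtakes $2^{16n}$ once $n>2^{16}$, and with $\ell=\mu=1$ the inequality you need fails for $n$ in the thousands. The fix is to use the sharper bound $s_C\le 9|\MMM(C^+)|$ from Theorem~\ref{thm:termination}(e), giving $D\le 15|\MMM(C^+)|\cdot 2^{\ell_C}$ and hence a per-factor bound $1/(34\ell\mu)$. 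Then the accumulated $(34\ell\mu)^n$ is absorbed by $2^{16n}(\ell\mu)^{2n}$ with room to spare, since $34^n\le 2^{6n}$ and the extra $(\ell\mu)^n$ on the target side covers the rest. The paper achieves the same effect by splitting roots into those inside and outside $C^+$: for $z_j\notin C^+$ the ratio $|z_i-z_j|/|m'-z_j|$ is bounded by $O(|\MMM(C^+)|)$ because $|m'-z_j|\ge 2^{\ell_C-2}$, and only the $|\MMM(C^+)|$ inside roots incur the larger factor. Your uniform Jensen bound can match this, but only once you feed it the correct diameter.
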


\begin{proof}
	Notice that $|\MMM(C^+)|=0$ is only possible if $C=\mathcal{B}$ as $C^+$ contains at least one root for each component $C$ that is not equal to the single input square $\BBB$. Hence, each point $z\in\Delta\cap C$ has distance at least $w(B)/2$ to each of the roots of $F$, and thus $|F(z)|\ge (2w(\BBB))^{-n-2}$.

	In what follows, we now assume that $C^+$ contains at least one root $\xi$. Let us first bound the separation of $\xi$:
	If there exists another root $\xi'\in C^+$, then we must have $\sigma_F(\xi)\le |\xi-\xi'|<(9\cdot|\MMM(C^+)|+1)\cdot \frac{3}{2}\cdot 2^{\ell_C}<n\cdot 2^{\ell_C+5}$, where we used that each component $C$ consists of at most $9\cdot|\MMM(C^+)|$ squares, each of size $2^{\ell_C}$.
	Now, let $|\MMM(C^+)|=1$, and let $C'$ be the direct ancestor of $C$. When processing $C'$, the \textsc{NewtonTest} failed as its success would imply that $k:=|\MMM((C')^+)|=|\MMM(C^+)|\ge 2$. In addition, since $C'$ is non-terminal, the disk $8\Delta_{C'}$ must contain at least two roots as otherwise $\mathbf{T_*}(2\Delta_{C'})$ as well as $\mathbf{T_*}(4\Delta_{C'})$ would return $1$. Hence, we have $\sigma_F(\xi)<n\cdot 2^{\ell_{C'}+5}=n\cdot 2^{\ell_C+6}$. We conclude that, in any case, any root $\xi\in C^+$ has separation $\sigma_F(\xi)<n\cdot 2^{\ell_C+6}$.

	In the next step, we show that there exists an $m'\in\Delta\cap C^+$ whose distance to $C$ is at most $2^{\ell_C-2}$ and whose distance to any root of $F$ is at least $\frac{\min(2^{\ell_C-2},r)}{2\sqrt{n}}$. Namely, due to our assumption, there exists a point $p\in\Delta\cap C$. Then, the two disks $\Delta(p,2^{\ell_C-2})$ and $\Delta$ share an area of size larger than $\min(2^{\ell_C-2},r)^2$, and thus there must exist an $m'\in\Delta(p,2^{\ell_C-2})\cap\Delta\subset C^+$ whose distance to any root of $F$ is lower bounded by $\left(\frac{\min(2^{\ell_C-2},r)^2}{\pi\cdot n}\right)^{1/2}$.

	Now, let $z_i\in C^+$ be an arbitrary but fixed root of $F$. If $z_j$ is a root not contained in $C^+$, then $|m'-z_j|>2^{\ell_C-2}$, and
	\begin{align*}
		\frac{|z_i-z_j|}{|m'-z_j|}
		  & \le
		\frac{|z_i-m'|+|m'-z_j|}{|m'-z_j|}
		\le
		1+\frac{|z_i-m'|}{|m'-z_j|}<1+\frac{9\cdot n\cdot 2^{\ell_C+1}}{2^{\ell_{C}-2}}\\
		  & = 1+72\cdot |\MMM(C^+)|<2^7\cdot n.
	\end{align*}

	If $z_j$ is a root in $C^+$, then
	$$\frac{|z_i-z_j|}{|m'-z_j|}\le\frac{9 n 2^{\ell_C+1}}{\delta}= \frac{18\cdot n^{3/2}\cdot 2^{\ell_C+1}}{\min(2^{\ell_C-2},r)}\le 2^8\cdot n^{3/2}\cdot \max\nolimits_1\lambda.$$
	Hence, we get
	\begin{align*}
		|F(m')|
		  & =
		|F_n|\cdot\prod_{j=1}^{n}|m'-z_j|
		=
		|F'(z_i)|\cdot|m'-z_i|\cdot\prod_{j\neq i}\frac{|m'-z_j|}{|z_j-z_i|}\\
		  & \ge
		|F'(z_i)|\cdot|m'-z_i|\cdot (2^8 n^{3/2}\max\nolimits_1\lambda)^{-|\MMM(C^+)|}\cdot (2^7 n)^{-(n-\MMM(C^+))}\\
		  & >
		|F'(z_i)|\cdot \frac{\min(2^{\ell_C-2},r)}{2\sqrt{n}}\cdot (2^8n^{3/2}\max\nolimits_1\lambda)^{-n}\\
		  & >
		|F'(z_i)|\cdot\frac{\sigma_F(z_i)}{2^9n^{3/2}\max\nolimits_1\lambda}\cdot (2^8n^{3/2}\max\nolimits_1\lambda)^{-n}\\
		  & >
		2^{-16n}\cdot\sigma_F(z_i)\cdot|F'(z_i)|\cdot (n\cdot\max\nolimits_1\lambda)^{-2n},
	\end{align*}
	where, in the second to last inequality, we used that $\sigma_F(z_i)<2^{\ell_{C}+6}\cdot n$.
\end{proof}

We can now bound the cost for processing a component $C$:
\begin{lemma}\label{lem:costTk}
	When processing a component $C=\{B_1,\ldots,B_{s_C}\}$ with $|\MMM(C^+)|\ge 1$, the cost for all steps outside the \textsc{NewtonTest} are bounded by
	\begin{align}\nonumber
		  & \tilde{O}(n\cdot(n\LOG\hat{\phi}(C)+\LOG(\sigma_F(\phi(C))^{-1})+\LOG(F'(\phi(C))^{-1})))\text{ }+                         \\
		  & \tilde{O}(n\cdot(\sum_{i=1}^{s_C} n\LOG\hat{\psi}(B_{i})+\tau_F+\LOG\sigma_F(\psi(B_{i}))^{-1}+\LOG F'(\psi(B_{i}))^{-1}))
		\label{cost:component:nonewton}
	\end{align}
	bit operations. The cost for the \textsc{NewtonTest} is bounded by
	\begin{align}\label{cost:component:newton}
		\tilde{O}(n( & n\LOG\hat{\phi}(C)+\LOG \sigma_F(\phi(C))^{-1}+\LOG F'(\phi(C))^{-1}+|\MMM(C^+)|\cdot\log N_C))
	\end{align}
	bit operations.
	If $C^+$ contains no root, then $C$ is the component consisting of the single input square $\BBB$, and the total cost for processing $C$ is bounded by $\tilde{O}(n (\tau_F+n\LOG(w(\BBB),w(\BBB)^{-1})))$ bit operations.
\end{lemma}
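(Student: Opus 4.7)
The plan is to enumerate every sub-routine call performed when processing $C$ and to bound each of them using Lemma~\ref{allkcost} and Lemma~\ref{Fmbighelp} as the two master tools. Outside \textsc{NewtonTest}, the algorithm makes two calls $\T(2\Delta_C)$ and $\T(4\Delta_C)$ in line~\ref{testbeforenewton} of \cisolate, together with $4 s_C$ calls $\T(\Delta_{B'})$ in line~\ref{DiscardCBisect} of \textsc{Bisection}, one for each of the four sub-squares obtained by quartering each $B_i\in C$. For each such disk $\Delta=\Delta(m,r)$, Lemma~\ref{allkcost} bounds the cost by $\tilde{O}(n(\tau_F + n\LOG(m,r) + \LOG(\max_{z\in\Delta}|F(z)|^{-1})))$; provided $\Delta\cap C\neq\emptyset$ and the ratio $\lambda:=2^{\ell_C}/r$ satisfies $\MAX(\lambda)=O(1)$, Lemma~\ref{Fmbighelp} further turns $\LOG(\max|F|^{-1})$ into $\tilde{O}(n)+\LOG\sigma_F(z_i)^{-1}+\LOG F'(z_i)^{-1}$ for any $z_i\in C^+$.

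I would then specialize $z_i$ and control $\LOG(m,r)$ using the mappings of Definition~\ref{def:canonicalmap}. For $\Delta\in\{2\Delta_C,4\Delta_C\}$, both $|m_C|$ and $r$ are of order $O(n\cdot|\hat\phi(C)|)$ by property (\ref{property:maps}), so $\LOG(m_C,r)=O(\log n)+\LOG\hat\phi(C)$, and choosing $z_i:=\phi(C)\in C^+$ yields the first summand of (\ref{cost:component:nonewton}). For each sub-square call $\T(\Delta_{B'})$ with $B'\subset B_i\in C$, the same reasoning with $\hat\psi(B_i)$ in place of $\hat\phi(C)$ bounds $\LOG(m_{B'},\tfrac{3}{4}w(B'))$ by $O(\log n)+\LOG\hat\psi(B_i)$, and choosing $z_i:=\psi(B_i)\in 4B_i\cap C^+$ yields the second summand of (\ref{cost:component:nonewton}) after summing over $i\in\{1,\ldots,s_C\}$ and over the four children of each $B_i$. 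The edge case $|\MMM(C^+)|=0$ forces $C=\BBB$ (only the input square can have an empty enlarged root set), only the preprocessing bisection is performed, and Lemma~\ref{Fmbighelp} directly yields $\LOG(\max|F|^{-1})=O(n\LOG w(\BBB))$, giving the asserted bound.

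For the \textsc{NewtonTest} the cost splits into three parts: (a) Algorithm~\ref{algo:softpredicate} in line~\ref{softcallinnewton}, whose precision is controlled by Lemma~\ref{precision:compare} in terms of $|F(x_C)|$ and $|F'(x_C)|$; (b) the Newton-iterate computation in line~\ref{resultless}, which requires $L$-bit approximations of $F(x_C)$ and $F'(x_C)$ with $L$ polynomial in $\log(N_C\cdot 2^{-\ell_C})$ and $\log|F'(x_C)/F(x_C)|$ (cf.~\cite[Lemma~17]{Sagraloff2015}); and (c) one final call $\T(\Delta')$ in which $\Delta'$ has radius $r'=2^{\ell_C-3}/N_C$. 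Since the point $x_C$ chosen in line~\ref{ChoosePoint} lies at distance at least $2^{\ell_C-1}$ from every root of $F$, a repetition of the argument of Lemma~\ref{Fmbighelp} applied to a disk centered at $x_C$ yields the lower bound on $|F(x_C)|$ required for (a) and (b). In (c), the ratio $\MAX(\lambda)=8N_C$ is no longer $O(1)$, so the factor $(|\MMM(C^+)|\cdot\MAX(\lambda))^{2n}$ in Lemma~\ref{Fmbighelp} contributes $O(n\log(|\MMM(C^+)|\cdot N_C))$ to $\LOG\max^{-1}$; combined with $\LOG(m,r')=O(\log N_C)+\LOG\hat\phi(C)$ and with the $N_C$-dependence of (b), this produces the $|\MMM(C^+)|\cdot\log N_C$ summand of (\ref{cost:component:newton}). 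The main obstacle I anticipate is isolating a $|\MMM(C^+)|\log N_C$ contribution rather than the $n\log N_C$ that Lemma~\ref{Fmbighelp} naively delivers; this should be achievable by expanding $F$ about $m_C$ and exploiting the fact that only the $k_C=|\MMM(C^+)|$ nearby roots drive the precision of the Newton iterate, while the remaining $n-k_C$ roots contribute only logarithmic terms through the root bound $\Gamma_F$.
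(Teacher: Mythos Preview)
Your plan matches the paper's proof almost step for step: enumerate the $\T$-calls, bound each via Lemma~\ref{allkcost} together with Lemma~\ref{Fmbighelp}, specialize the root $z_i$ to $\phi(C)$ (resp.\ $\psi(B_i)$), and control $\LOG(m,r)$ via $\hat\phi(C)$ (resp.\ $\hat\psi(B_i)$) using property~(\ref{property:maps}). Your treatment of the \textsc{NewtonTest} into parts (a), (b), (c) is likewise exactly what the paper does, including the observation that $x_C$ is at distance at least $2^{\ell_C-1}$ from every root so that the argument of Lemma~\ref{Fmbighelp} can be rerun with $m'=x_C$ (note that $x_C\in C^+$, so this is legitimate). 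For (b) the paper additionally uses that once line~\ref{softcallinnewton} is passed we have $|F'(x_C)|>|F(x_C)|/(6r(C))$, which converts the bound on $|F(x_C)|^{-1}$ into one on $|F'(x_C)|^{-1}$; you should make this explicit.

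The one point where you overcomplicate matters is the $|\MMM(C^+)|\cdot\log N_C$ term. No new expansion is needed: the sharper bound is already inside the \emph{proof} of Lemma~\ref{Fmbighelp}, whose stated inequality is a deliberate weakening. There the product $\prod_{j\neq i}|m'-z_j|/|z_i-z_j|$ is split into roots $z_j\in C^+$, each contributing a factor $\ge(2^8 n^{3/2}\MAX\lambda)^{-1}$, and roots $z_j\notin C^+$, each contributing a factor $\ge(2^7 n)^{-1}$ that is \emph{independent of $\lambda$}. Hence the intermediate estimate reads
\[
|F(m')|\ \ge\ |F'(z_i)|\cdot|m'-z_i|\cdot(2^8 n^{3/2}\MAX\lambda)^{-|\MMM(C^+)|}\cdot(2^7 n)^{-(n-|\MMM(C^+)|)},
\]
so that $\log\MAX\lambda$ enters with coefficient $|\MMM(C^+)|$, not $n$. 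Plugging in $\MAX\lambda=8N_C$ for $\Delta'$ immediately gives the $|\MMM(C^+)|\cdot\log N_C$ contribution in~(\ref{cost:component:newton}). The paper simply cites ``Lemma~\ref{Fmbighelp}'' at this step but is in fact invoking this sharper line from its proof.
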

\begin{proof}
	We start with the special case, where $C^+$ contains no root. Notice that this is only possible if $C=\mathcal{B}$ and $2\mathcal{B}$ contains no root. Hence, in this case, the algorithm performs four $\T(\Delta)$-tests in the preprocessing phase and then discards $\mathcal{B}$. Due to Lemma~\ref{allkcost} and Lemma~\ref{Fmbighelp}, the cost for each of these tests is bounded by $\tilde O(n(\tau_F + n\LOG(w(\BBB),w(\BBB)^{-1}))$ bit operations.
	Hence, in what follows, we may assume that $C^+$ contains at least one root.
	We first estimate the cost for calling $\T$ on a disk $\Delta=\Delta(m,r)$, where $\Delta=2\Delta_C$, $\Delta=4\Delta_C$, or $\Delta=\Delta_{B'}$, where $B'$ is one of the four sub-squares into which a square $B_i$ is decomposed. If $\Delta=2\Delta_C$ or $\Delta=4\Delta_C$, then $\LOG(m,r)=O(\log n+\LOG \hat{\phi}(C))$, and thus the cost for the corresponding tests is bounded by
	\[
		\tilde{O}(n\cdot (n\LOG\hat{\phi}(C)+\tau_F+\LOG \sigma_F(\phi(C))^{-1}+\LOG F'(\phi(C))^{-1}))
	\]
	bit operations, where we again use Lemma~\ref{allkcost} and Lemma~\ref{Fmbighelp}. For $\Delta=\Delta_{B'}$, we have $\LOG(m,r)=O(\log n+\LOG \hat{\psi}(C))$, and thus Lemma~\ref{allkcost} and Lemma~\ref{Fmbighelp} yields the bound
	$$\tilde{O}(n\cdot (n\LOG\hat{\psi}(B_{i})+\tau_F+\LOG\sigma_F(\psi(B_{i}))^{-1}+\LOG F'(\psi(B_{i}))^{-1}))$$
	for processing each of the four sub-squares $B'\subset B_i$ into which $B_i$ is decomposed. Hence, the bound in (\ref{cost:component:nonewton}) follows.
	We now consider the \textsc{NewtonTest}: In line~\ref{resultless} of the \textsc{NewtonTest}, we have to compute an approximation $\tilde{x}_C'$ of the Newton iterate $x_C'$ such that $|\tilde{x}_C'-x'_C|<\frac{1}{64}\cdot \frac{2^{\ell_C}}{N_C}$. For this, we choose a point $x_C\in\mathcal{B}\backslash C$ in line~\ref{ChoosePoint} of \cisolate\, whose distance to $C$ is $2^{\ell_C-1}$ and whose distance to the boundary of $\mathcal{B}$ is at least $2^{\ell_C-1}$. Since the union of all components covers all roots of $F$ that are contained in $\mathcal{B}$, and since the distance from $C$ to any other component is at least $2^{\ell_C}$, it follows that the distance from $x_C$ to any root of $F$ is larger than $2^{\ell_C-1}$.
	With $\Delta:=\Delta(x_C,2^{\ell_C-3})$, it thus follows that $|F(x_C)|\ge 2^{-n}\cdot\max_{z\in\Delta}|F(z)|$, and using
	Lemma~\ref{Fmbighelp}, we conclude that
	\begin{align}\label{boundF(xC)}
		\LOG F(x_C)^{-1}
		  & =O(n\log n+\LOG \sigma_F(\phi(C))^{-1}+\LOG F'(\phi(C))^{-1}).
	\end{align}
	It follows that the cost for calling Algorithm~\ref{algo:softpredicate} in Line~\ref{softcallinnewton} of the \textsc{NewtonTest} is bounded by
	\begin{align*}
		  & \tilde{O}(n\cdot (\LOG F(x_C)^{-1}+\tau_F+n\LOG x_C)) \\ &\hspace{2cm}=
		\tilde{O}(n\cdot (\tau_F+\LOG \sigma_F(\phi(C))^{-1}+\LOG F'(\phi(C))^{-1}+n\LOG \hat{\phi}(C)))
	\end{align*}
	bit operations,
	Namely, Algorithm~\ref{algo:softpredicate} succeeds with an absolute precision bounded by $O(\LOG |F(x_C)|^{-1})$ and, within Algorithm~\ref{algo:softpredicate}, we need to approximately evaluate $F$ and $F'$ at the point $x_C$ to such a precision; see also~\cite[Lemma~3]{DBLP:journals/corr/abs-1104-1362} for the cost of evaluating a polynomial of degree $n$ to a certain precision.
	If we pass line~\ref{softcallinnewton}, then we must have $|F'(x_C)|>\frac{|F(x_C)|}{6r(C)}$. Hence, in the for-loop of the \textsc{NewtonTest}, we succeed for an $L$ of size $O(\LOG F(x_C)^{-1}+n\LOG w(C)-\ell_C +\log N_C)$, and thus the cost for computing $\tilde{x}_C'$ is bounded by
	\begin{align*}
		\tilde{O}(n\cdot (\LOG \sigma_F(\phi(C))^{-1}+\LOG F'(\phi(C))^{-1}+\LOG \hat{\phi}(C)+\log N_C)),
	\end{align*}
	where we again use (\ref{boundF(xC)}) and the fact that $\sigma_F(\phi(C))<n\cdot 2^{\ell_C+6}$ and $\LOG w(C)=O(\log n+\LOG \hat{\phi}(C))$.
	It remains to bound the cost for calling $\T$ on $\Delta':=\Delta(\tilde{x}_C',\frac{1}{8}\cdot\frac{2^{\ell_C}}{N_C})$ in the \textsc{NewtonTest}. Again, we use Lemma~\ref{allkcost} and Lemma~\ref{Fmbighelp} to derive an upper bound of size
	\[
		O(n\log n+\LOG \sigma_F(\phi(C))^{-1}+\LOG F'(\phi(C))^{-1}+|\MMM(C^+)|\cdot \log N_C)
	\]
	for $\LOG (\max_{z\in\Delta'}|F(z)|)^{-1}$, and thus a bit complexity bound of size
	\[
		\tilde{O}(n\cdot (\LOG\hat{\phi}(C)+\LOG \sigma_F(\phi(C))^{-1}+\LOG F'(\phi(C))^{-1}+|\MMM(C^+)|\cdot \log N_C))
	\]
	for $\T(\Delta')$. This proves correctness of the bound in (\ref{cost:component:newton}).
	We finally remark that the cost for all other (mainly combinatorial) steps are negligible. Namely, the bit size $b_C$ of a square in a component $C$ is bounded by $O(\LOG(w(C),w(C)^{-1})+\log n)=O(\log n+\LOG\sigma_F(\phi(C))^{-1}+\LOG\hat{\phi(C)})$. Hence, each combinatorial step outside the \textsc{NewtonTest}, such as grouping together squares into maximal connected components in Line~\ref{grouping} of \textsc{Bisection}, needs
	$\tilde{O}(n)$ arithmetic operations with a precision $O(b_C)$, and thus a number of bit operations bounded by (\ref{cost:component:nonewton}).

	In the \textsc{NewtonTest}, we need to determine the squares $B_{i,j}$ of size $2^{\ell_C-1}/N_C$ that intersect the disk $\Delta'$. This step requires only a constant number of additions and multiplication, each carried out with a precision bounded by $O(\LOG(w(C),w(C)^{-1})+\log n+\log N_C)$. Hence, the cost for these steps is bounded by (\ref{cost:component:newton}).\qedhere
\end{proof}

When summing up the bound in (\ref{cost:component:nonewton}) over all components $C$ produced by the algorithm, we obtain the bit complexity bound
\begin{align}\label{totalbound1}
	\begin{split}
	  & \tilde{O}\Big(n\cdot (n\cdot(|\MMM(2\BBB)| +\LOG\Mea_{F}(2\BBB)+\LOG(w(\BBB),w(\BBB)^{-1}))+\tau_F\cdot |\MMM(2\BBB)| \\
	  & \hspace{1cm} +\sum_{z_i\in \MMM(2\BBB)}(\LOG\sigma_F(z_i)^{-1}+\LOG F'(z_i)^{-1}))\Big),
	\end{split}
\end{align}
for all steps outside the \textsc{NewtonTest}. Here, we exploit the fact that the preimage of each (pseudo-) root in $\MMM^+(2\BBB)$ under each of the mappings $\phi,\hat{\phi},\psi,\hat{\psi}$ has size $O(s_{\max}\cdot \log n)$, and that $|z_0|>w(\BBB)/2$ for the pseudo-root $z_0\in\MMM^+(2\BBB)$. If we now sum up the bound (\ref{cost:component:newton}) for the cost of the \textsc{NewtonTest} over all components, we obtain a comparable complexity bound;
however, with an additional term $n\cdot \sum_C |\MMM(C^+)|\cdot\log N_C$, where the sum is only taken over the components for which the \textsc{NewtonTest} is called. The following considerations show that the latter sum is also dominated by the bound in (\ref{totalbound1}).

\begin{lemma}\label{bound:sumZNC}
	Let $\mathcal{T}_{\textsc{New}}\subset \mathcal{T}$ be the set of all components in the subdivision tree $\mathcal{T}$ for which the \textsc{NewtonTest} is called. Then, $\sum_{C\in\mathcal{T}_{\textsc{New}}}|\MMM(C^+)|\cdot\log N_C$ is bounded by
	\[
		\tilde{O}(n\cdot(\LOG w(\BBB)+\LOG\Mea_{F}(2\BBB)+|\MMM(2\BBB)|)+\tau_F\cdot |\MMM(2\BBB)|+\sum_{z_i\in\MMM(2\BBB)}\LOG F'(z_i)^{-1}).
	\]
\end{lemma}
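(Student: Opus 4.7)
The plan is to rewrite the left-hand side of the inequality as a sum indexed by roots of $F$ rather than by components, and then to bound each per-root contribution by an amortized argument on the canonical path. Concretely, swapping summation yields
\[
\sum_{C\in\mathcal{T}_{\textsc{New}}} |\MMM(C^+)|\cdot\log N_C
\;=\;
\sum_{\xi\in\MMM(2\mathcal{B})}\;\sum_{C\in P_\xi\cap\mathcal{T}_{\textsc{New}}}\log N_C,
\]
because $\xi\in C^+$ is equivalent to $C$ lying on the canonical path $P_\xi$. Thus the job reduces to bounding $\sum_{C\in P_\xi}\log N_C$ for each root $\xi$ separately.

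For a fixed $\xi$, I would look at the sequence $\Phi_i:=\log N_{C_i}$ along $P_\xi=(C_0,\dots,C_{L-1})$. By construction $\Phi_{i+1}=2\Phi_i$ whenever the step from $C_i$ to $C_{i+1}$ is a successful Newton step, whereas $\Phi_{i+1}=\max(2,\Phi_i/2)$ when a Bisection step occurs (whether or not the \textsc{NewtonTest} was invoked and failed). Telescoping,
\[
\sum_{\text{Newton success}}\Phi_i\;-\;\tfrac{1}{2}\sum_{\text{Bisect}}\Phi_i \;=\; \Phi_{L-1}-\Phi_0\;\ge\;-\log N_{\max},
\]
which gives $\sum_{\text{Bisect}}\Phi_i\le 2\sum_{\text{success}}\Phi_i+O(\log N_{\max})$. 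Since every successful Newton step at $C$ shrinks the width by the factor $N_C$ while a Bisection step shrinks it only by $2$,
\[
\sum_{\text{success on }P_\xi}\log N_{C_i}\;\le\;\log\!\frac{w(\mathcal{B})}{w(C_{\text{final}}(\xi))}.
\]
Combining these two estimates with Theorem~\ref{thm:termination}(g), which yields $\log N_{\max}=O(\LOG w(\mathcal{B})+\LOG\sigma_F(2\mathcal{B})^{-1})$ and (via a per-root refinement of the width bound based on the fact that isolation is detected as soon as $w(C)\lesssim \sigma_F(\xi)/n$) $\log(w(\mathcal{B})/w(C_{\text{final}}(\xi)))=O(\LOG w(\mathcal{B})+\LOG\sigma_F(\xi)^{-1}+\log n)$, I obtain
\[
\sum_{C\in P_\xi\cap \mathcal{T}_{\textsc{New}}}\log N_C \;=\; O\bigl(\LOG w(\mathcal{B})+\LOG\sigma_F(\xi)^{-1}+\log n\bigr).
\]

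Summing over $\xi\in\MMM(2\mathcal{B})$ then yields a bound of the form
$O\bigl(|\MMM(2\mathcal{B})|\,\LOG w(\mathcal{B})+|\MMM(2\mathcal{B})|\log n+\sum_{z_i\in\MMM(2\mathcal{B})}\LOG\sigma_F(z_i)^{-1}\bigr)$.
To reach the form in the statement, the final step is to translate separations into values of $F'$ and the Mahler measure, using $|F'(z_i)|=|a_n|\cdot\sigma_F(z_i)\cdot\prod_{j\ne i,\,j^*(i)}|z_i-z_j|$, the inequality $|z_i-z_j|\le 2\max_1(z_i)\max_1(z_j)$, and Cauchy's root bound $\log\max_1(z_j)=O(\tau_F)$. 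This produces, for each $z_i\in\MMM(2\mathcal{B})$, an inequality of the shape
\[
\LOG\sigma_F(z_i)^{-1}\;\le\;\LOG F'(z_i)^{-1}+O\bigl(\LOG\Mea_F(2\mathcal{B})+\tau_F+n\LOG z_i\bigr),
\]
after which $\sum_{z_i\in\MMM(2\mathcal{B})}\LOG z_i\le\LOG\Mea_F(2\mathcal{B})+O(|\MMM(2\mathcal{B})|)$ lets me assemble the claimed bound. The main obstacle will be the last conversion: a naive use of $|z_i-z_j|\le 2\max_1(z_i)\max_1(z_j)$ brings in the \emph{global} Mahler measure $\Mea_F$, which can exceed $\Mea_F(2\mathcal{B})$ by as much as $2^{n\tau_F}$. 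Controlling this requires a case distinction according to whether the nearest neighbor $z_{j^*(i)}$ lies inside $\MMM(2\mathcal{B})$ or outside, and, for outside roots, exploiting that the excess $\log|z_i-z_j|$ can be charged either to $\log\max_1(z_j)$ (which itself sums to $\LOG\Mea_F(2\mathcal{B})$ after regrouping) or to the Cauchy term $\tau_F$ only a total of $|\MMM(2\mathcal{B})|$ times rather than $n|\MMM(2\mathcal{B})|$ times. Handling this bookkeeping carefully is the technical crux of the proof.
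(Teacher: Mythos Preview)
Your approach is correct and takes a genuinely different route from the paper's proof. The paper never swaps the double sum; instead it splits into $N_C=4$ versus $N_C>4$, and for the latter uses Theorem~\ref{thm:termination}(f) to write $\log N_C\le 2(\log w(\anc^*(C))-\log w(C))+2$. It then bounds $-|\MMM(C^+)|\log w(C)$ directly via the inequality $|F'(z_i)|\le 2^{2n+\tau_F}\MAX(z_i)^n\, w(C)^{|\MMM(C^+)|/2}$ (obtained from Landau's inequality applied to the shifted polynomial $F(z_i-x)$), and bounds $\sum_C|\MMM(C^+)|\log w(\anc^*(C))$ through the $\hat\phi$ machinery. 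Your telescoping potential argument on $\Phi_i=\log N_{C_i}$ along the canonical path is cleaner in that it avoids both the $\anc^*$ bookkeeping and the centered/weakly-centered map $\hat\phi$; the paper's argument, on the other hand, reuses infrastructure already built for Lemma~\ref{lem:costTk} and never needs the per-root termination refinement.

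Two remarks on details. First, the per-root lower bound on $w(C_{\text{final}}(\xi))$ does go through: the termination argument in the proof of Theorem~\ref{thm:termination} only uses $|\xi-\xi'|\ge\sigma_F(2\mathcal B)$ for $\xi\in C^+$ and $\xi'\in (C')^+$, which can be replaced verbatim by $|\xi-\xi'|\ge\sigma_F(\xi)$; combined with Theorem~\ref{thm:termination}(f) applied to the parent of $C_{\text{final}}(\xi)$, this gives $-\log w(C_{\text{final}}(\xi))=O(\LOG w(\mathcal B)+\LOG\sigma_F(\xi)^{-1}+\log n)$. Second, your worry about the global Mahler measure is misplaced. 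The naive bound $\prod_{j\ne i,j^*}|z_i-z_j|\le 2^{n}\MAX(z_i)^{n}\cdot\Mea_F/|a_n|$ already suffices, because Landau's inequality gives $\LOG\Mea_F\le\LOG\|F\|_2=O(\tau_F+\log n)$, not $O(n\tau_F)$. Hence $|\MMM(2\mathcal B)|\cdot\LOG\Mea_F=O(\tau_F\,|\MMM(2\mathcal B)|)$ up to a $\log n$ factor, which is exactly the term in the target bound. No case distinction on whether $z_{j^*(i)}$ lies in $2\mathcal B$ is needed, and the ``technical crux'' you anticipate dissolves once you replace the per-root Cauchy bound $\LOG z_j=O(\tau_F)$ by the aggregate bound $\sum_j\log\MAX(z_j)=\log(\Mea_F/|a_n|)=O(\tau_F)$.
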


\begin{proof}
	We define $\mathcal{T}_{\textsc{New}}^{=4}:=\{C\in\mathcal{T}_\textsc{New}:N_C=4\}$ and $\mathcal{T}_{\textsc{New}}^{>4}:=\{C\in\mathcal{T}_\textsc{New}:N_C>4\}$. Then, we have
	\[
		\sum_{C\in \mathcal{T}_{\textsc{New}}^{=4}}|\MMM(C^+)|\cdot\log N_C
		\le \sum_{C\in\mathcal{T}_{\textsc{New}}^{=4}}2n
		\le \sum_{C\in\mathcal{T}}2n
		\le 2n\cdot |\MMM(2\BBB)|\cdot s_{\max},
	\]
	hence it remains to consider only the components $C\in\mathcal{T}_{\textsc{New}}^{>4}$. For such a component, let $\anc^*(C)\in\mathcal{T}$ be the last ancestor for which the \textsc{NewtonTest} succeeded.
	According to Theorem~\ref{thm:termination} (f), we have $N_C\le 4\cdot(w(\anc^*(C))/w(C))^2$, and thus $|\MMM(C^+)|\cdot\log N_C\le 2\cdot|\MMM(C^+)|\cdot (1+\log w(\anc^*(C))-\log w(C))$. In order to bound the later expression in terms of values that depend on the roots of $F$, let $z_i\in C^+$ be an arbitrary root in $C^+$. Then, assuming $w(C)\le 1$, we obtain
	\begin{align*}
		|F'(z_i)|
		  & =|a_n|\cdot\prod_{j\neq i: z_j\in \MMM(2\Delta_C)}|z_i-z_j|\prod_{j:z_j\notin \MMM(2\Delta_C)}|z_i-z_j|   \\
		  & \le |a_n|\cdot (2 w(C))^{|\MMM(2\Delta_C)|-1}\cdot\frac{\Mea_{F(z_i-x)}}{|a_n|}                           \\
		  & \le 2^{2n+\tau_F} w(C)^{|\MMM(2\Delta_C)|-1}\cdot \MAX(z_i)^n                                           ,
	\end{align*}
	where the last inequality follows from Landau's inequality~\cite[Theorem 6.31]{vzGG03}, that is, $\Mea_p\le \|p\|_2$ for any complex polynomial $p\in\CC[x]$, and $\|F(z_i-x)\|_2 \le \|F(z_i-x)\|_1 \le  2^{\tau_F}\MAX(z_i)^n2^{n+1}$. For a more detailed derivation of the latter see~\cite[Lemma 22]{Sagraloff2015}. Furthermore, using that $\MMM(2\Delta_C)$ contains at least two roots (as the \textsc{NewtonTest} is called) and that
	$C^+\subset 2\Delta_C$, yields
	\begin{align*}
		|F'(z_i)|
		\le 2^{2n+\tau_F}\MAX(z_i)^n w(C)^{\frac{|\MMM(2\Delta_C)|}{2}}
		\le 2^{2n+\tau_F} \MAX(z_i)^n w(C)^{\frac{|\MMM(C^+)|}{2}}.
	\end{align*}
	With $z_i:=\phi(C)$ and $\LOG\phi(C)\le \log(64n)+\LOG\hat{\phi}(C)$, we conclude that
	\begin{align}\label{bound:ZNC}
		-|\MMM(C^+)|\cdot\log w(C)\le 6n\log(64n)+2\tau_F+2n\LOG\hat{\phi}(C)+\LOG F'(\phi(C))^{-1}.
	\end{align}
	Notice that the latter inequality is trivially also fulfilled for a component $C$ with $w(C)<1$.
	In addition, it holds that
	\begin{align}\label{bound:wanc}
		|\MMM(C^+)|\cdot \log w(\anc^*(C))\le|\MMM(C^+)|\cdot (\LOG \hat{\phi}(\anc^*(C))+\log(128n)).
	\end{align}
	The sum of the term at the right side of (\ref{bound:ZNC}) over all $C$ can be bounded by
	\begin{align*}
		  & \tilde{O}(s_{\max}\cdot (|\MMM(2\BBB)|\cdot\tau_F+\sum_{z_i\in\MMM^*(2\BBB)}n\LOG(z_i)+\sum_{z_i\in\MMM(2\BBB)}\LOG F'(z_i)^{-1}))       \\
		  & \quad= \tilde{O}(s_{\max}\cdot (|\MMM(2\BBB)|\cdot\tau_F+n\LOG w(\BBB)+n\LOG\Mea_{F}(2\BBB)+\sum_{z_i\in\MMM(2\BBB)}\LOG F'(z_i)^{-1})),
	\end{align*}
	as the preimage of each (pseudo-) root $z_i\in 2\BBB$ (under $\phi$ and $\hat{\phi}$) has size at most $s_{\max}\log n$.
	We may further omit the factor $s_{\max}$ in the above bound as $\log\LOG\sigma_F(z_i)^{-1}=O(\log(\tau_F+n\LOG(z_i)+\LOG F'(z_i)^{-1}))$ for an arbitrary root $z_i$ of $F$, and thus
	\begin{align*}
		  & s_{\max}=O(\log n+\log\LOG w(\BBB)+\log\LOG\sigma_{F}(2\BBB)^{-1})                                          \\
		  & =O(\log n+\log\LOG w(\BBB)+\log\tau_F+\log\LOG\Mea_F(2\BBB)+\log\sum_{z_i\in\MMM(2\BBB)}\LOG F'(z_i)^{-1}).
	\end{align*}

	It remains to bound the sum of the term $|\MMM(C^+)|\cdot\LOG\hat{\phi}(\anc^*(C))$ on the right side of (\ref{bound:wanc}) over all $C\in \mathcal{T}_{\textsc{New}}^{>4}$. Let $\mathcal{T}_{\anc^*}:=\{C^*\in\mathcal{T}:\exists C\in\mathcal{T}\text{ with }\anc^*(C)=C^*\}$.
	Then, for a fixed $C^*\in\mathcal{T}_{\anc^*}$, it holds that any $C\in\mathcal{T}_{\textsc{New}}^{>4}$ with $\anc^*(C)=C^*$ is
	connected with $C^*$ in $\mathcal{T}$ via a path of length at most $\hat{s}:=\log\LOG w(\BBB)+\log\LOG \sigma_F(2\BBB)^{-1}$. Namely, we have already shown that
	$\log\log N_C\le \hat{s}$ for all $C\in\mathcal{T}$, and thus $N_{C}>4$ implies that the path connecting $C$ and $C^*$ must have length at most $\hat{s}$.
	Hence, it follows that
	\begin{align*}
		  & \sum_{C\in\mathcal{T}_{\textsc{New}}^{>4}:\anc^*(C)=C^*} |\MMM((C)^+)|\cdot\log w(\anc^*(C))                    \\
		  & =\sum_{C\in\mathcal{T}_{\textsc{New}}^{>4}:\anc^*(C)=C^*} |\MMM((C)^+)|\cdot\log w(C^*)                         \\
		  & \le (\LOG \hat{\phi}(C^*)+\log(128n))\cdot \sum_{C\in\mathcal{T}_{\textsc{New}}^{>4}:\anc^*(C)=C^*} |\MMM(C^+)| \\
		  & \le \hat{s}\cdot |\MMM((C^*)^+)|\cdot (\LOG \hat{\phi}(C^*)+\log(128n))                                         \\
		  & \le \hat{s}\cdot |\MMM(2\BBB)|\cdot (\LOG \hat{\phi}(C^*)+\log(128n)),
	\end{align*}
	where we use the fact that, for any two components $C_1,C_2$ in the above sum, either $C_1^+\cap C_2^+=\emptyset$, $C_1^+\subset C_2^+$, or  $C_2^+\subset C_1^+$. We conclude that
	\begin{align*}
		  & \sum_{C\in\mathcal{T}_{\textsc{New}}^{>4}} |\MMM(C^+)|\cdot\log w(\anc^*(C))                                                             \\
		  & \hspace{0.5cm}	\le	 \hat{s}\cdot |\MMM(2\BBB)|\cdot\sum_{C^*\in \mathcal{T}_{\anc^*}}(\LOG \hat{\phi}(C^*)+\log(128n))                   \\
		  & \hspace{0.5cm}	 =O(s_{\max}\cdot\hat{s}\cdot n\log n\cdot\sum_{z_i\in\MMM^+(2\BBB)}(\LOG(z_i)+\log(128n))                                \\
		  & \hspace{0.5cm}	 =\tilde{O}(s_{\max}\cdot \hat{s}\cdot n\cdot(\LOG w(\BBB)+\LOG\Mea_{F}(2\BBB)+|\MMM(2\BBB)|))                            \\
		  & \hspace{0.5cm} =\tilde{O}(n\cdot(\LOG w(\BBB)+\LOG\Mea_{F}(2\BBB)+|\MMM(2\BBB)|+\log\sum_{z_i\in\MMM(2\BBB)}\LOG F'(z_i)^{-1})).\qedhere
	\end{align*}
\end{proof}

Let us summarize our results:

\begin{theorem}\label{thm:main1}
	Let $F$ be a polynomial as defined in (\ref{def:polyF}) and let $\mathcal{B}\subset\mathbb{C}$ be an arbitrary axis-aligned square. Then, the algorithm {\cisolate} with input $\BBB$ uses
	\begin{align}\label{bound:main2}
		  & \tilde{O}(n\cdot (n\cdot(\MMM(2\BBB)+\LOG\Mea_{F}(2\BBB)+\LOG(w(\BBB),w(\BBB)^{-1}))+\tau_F\cdot |\MMM(2\BBB)| \\ \nonumber
		  & \quad+\sum_{z_i\in \MMM(2\BBB)}(\LOG\sigma_F(z_i)^{-1}+\LOG F'(z_i)^{-1}))),
	\end{align}
	bit operations. As input, the algorithm requires an $L$-bit approximation of $F$ with
	\begin{align}\label{bound:precision}
		  & \tilde{O}(n\cdot(\MMM(2\BBB)+\LOG\Mea_{F}(2\BBB)+\LOG(w(\BBB),w(\BBB)^{-1}))+\tau_F\cdot |\MMM(2\BBB)| \\ \nonumber
		  & \quad+\sum_{z_i\in \MMM(2\BBB)}(\LOG\sigma_F(z_i)^{-1}+\LOG F'(z_i)^{-1})).
	\end{align}
\end{theorem}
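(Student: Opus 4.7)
The plan is to assemble the theorem from the per-component cost bounds of Lemma~\ref{lem:costTk} together with the amortization machinery already set up via the maps $\phi,\hat\phi,\psi,\hat\psi$ and the subdivision tree size bound. Conceptually, the algorithm's bit complexity decomposes as a sum over the components $C$ in $\mathcal{T}_{\BBB}$ of two types of contributions: the ``non-\textsc{NewtonTest}'' work (the $\T$-tests called in line~\ref{testbeforenewton} of \cisolate{} and in \textsc{Bisection}) and the ``\textsc{NewtonTest}'' work. Both contributions are already bounded per component in Lemma~\ref{lem:costTk}, so the proof is in essence a careful amortized summation.

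First I would dispose of the trivial case where $\MMM(2\BBB)=\emptyset$. Here the algorithm discards $\BBB$ in the preprocessing phase after a constant number of $\T$-tests, and the last statement of Lemma~\ref{lem:costTk} yields the bound $\tilde{O}(n(\tau_F+n\LOG(w(\BBB),w(\BBB)^{-1})))$, which fits inside \eqref{bound:main2}. For the remainder of the argument, assume $\MMM(2\BBB)\neq\emptyset$. I would then sum the bound \eqref{cost:component:nonewton} over all components. Each summand is charged to some root in the image of $\phi$ or $\hat\phi$ (component-level terms) or to a root in the image of $\psi$ or $\hat\psi$ (square-level terms). By Theorem~\ref{mapping} together with Lemma~\ref{lem:canonicalhit}, every (pseudo-)root $z\in\MMM^+(2\BBB)$ has preimage of size $O(s_{\max}\log n)$ under each of the four maps. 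Moreover, by the defining property \eqref{property:maps} of $\hat\phi,\hat\psi$, any occurrence of $\LOG\hat\phi(C)$ or $\LOG\hat\psi(B)$ is dominated by $\log n+\LOG\hat\phi(C)$, which amortizes against a single root; and for the pseudo-root $z_0$ the bound $|z_0|>w(\BBB)/2$ absorbs the $\LOG w(\BBB)$ terms. Summing yields \eqref{totalbound1}, which is exactly the bound \eqref{bound:main2} after swallowing $s_{\max}\log n$ into the $\tilde{O}$.

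Next I would sum the \textsc{NewtonTest} bound \eqref{cost:component:newton}. The three terms $n\LOG\hat\phi(C)+\LOG\sigma_F(\phi(C))^{-1}+\LOG F'(\phi(C))^{-1}$ amortize over the roots in exactly the same way as above. The only genuinely new contribution is the term $n\sum_{C\in\mathcal{T}_{\textsc{New}}}|\MMM(C^+)|\cdot\log N_C$, and this is precisely what Lemma~\ref{bound:sumZNC} controls; the bound it supplies again lies inside \eqref{bound:main2}. Combining both summations establishes the bit complexity bound.

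For the precision claim \eqref{bound:precision}, I would observe that Lemma~\ref{allkcost} asserts that each $\T$-test performed on a disk $\Delta=\Delta(m,r)$ needs an $L$-bit approximation of $F$ with $L=\tilde{O}(\tau_F+n\LOG(m,r)+\LOG((\max_{z\in\Delta}|F(z)|)^{-1}))$, and the analogous statement holds for the single $\tilde{P}$-call inside \textsc{NewtonTest} via~\eqref{boundF(xC)}. The required overall precision is the maximum of these demands over all disks processed, which, combined with the estimates from Lemma~\ref{Fmbighelp} used in the proof of Lemma~\ref{lem:costTk}, is bounded term-by-term by exactly a factor of $1/n$ times the bit-complexity bound---yielding \eqref{bound:precision}. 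The main obstacle is really organizational bookkeeping: ensuring that each root in $\MMM(2\BBB)$ (and the pseudo-root $z_0$) is charged $\tilde{O}(1)$ times across \emph{both} the non-\textsc{NewtonTest} and the \textsc{NewtonTest} passes, and that the residual $\log N_C$ terms from Lemma~\ref{bound:sumZNC} don't exceed the claimed bound; all the hard analytic work is already encapsulated in Lemmas \ref{lem:costTk}, \ref{bound:sumZNC}, and in the amortization structure of $\hat\phi,\hat\psi$.
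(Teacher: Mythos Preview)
Your proposal is correct and follows essentially the same approach as the paper, which proves the theorem by citing Lemma~\ref{lem:costTk}, Lemma~\ref{bound:sumZNC}, and the amortized summation recorded in \eqref{totalbound1} (the ``remark following Lemma~\ref{lem:costTk}''). You have simply spelled out in more detail the bookkeeping that the paper leaves implicit, including the trivial case $\MMM(2\BBB)=\emptyset$ and the observation that the precision demand is termwise a factor $n$ below the bit-complexity bound.
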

\begin{proof}
	The bound (\ref{bound:main2}) on the bit complexity follows immediately from Lemma~\ref{lem:costTk}, Lemma~\ref{bound:sumZNC}, and the remark following Lemma~\ref{lem:costTk}.
	The bound (\ref{bound:precision}) on the precision demand follows directly from our considerations in the proof of Lemma~\ref{lem:costTk} and Lemma~\ref{allkcost}.
\end{proof}

Notice that the above complexity bounds are directly related to the size of the input box $\BBB$ as well as to parameters that only depend on the roots located in $2\BBB$. This makes our complexity bound adaptive in a very strong sense. In contrast, one might also be interested in (probably simpler) bounds when using our algorithm to isolate all complex roots of $F$? In this case, we may first compute an input box
$\BBB$ of width $w(\BBB)=2^{\Gamma+2}$ that is centered at the origin. Here,
$\Gamma\in\mathbb{N}_{\ge 1}$ is an integer bound for $\Gamma_F$ with $\Gamma=\Gamma_F+O(\log n)$. We have already argued in Section~\ref{sec:definitions} that such a bound $\Gamma$ can be computed using $\tilde{O}(n^2\Gamma_F)$ bit operations. Such a box $\BBB$ contains all complex roots of $F$, and thus running {\cisolate} with input $\BBB$ yields corresponding isolating disks. Hence, we obtain the following result:
\begin{corollary}
	Let $F$ be a square-free polynomial as in (\ref{def:polyF}). Then, for isolating all complex roots of $F$, {\cisolate} needs
	\begin{align}\label{bound:main3}
		  & =\tilde{O}(n\cdot(n^2+n \LOG \Mea_F+\sum_{i=1}^n \LOG F'(z_i)^{-1})) \\ \label{bound:main4}
		  & = \tilde{O}(n\cdot (n^2 + n \LOG\Mea_F +\LOG \Disc_F^{-1})).
	\end{align}
	bit operations, where $\Disc_F := |A_n|^{2n - 2} \prod_{1 \le i < j \le n} (z_j - z_i)^2$ is the \emph{discriminant of $F$}. As input, the algorithm requires an $L$-bit approximation of $F$ with
	\begin{align}\label{bound:precision2}
		\tilde{O}(n^2 +n \LOG \Mea_F + \LOG \Disc_F^{-1})
	\end{align}
\end{corollary}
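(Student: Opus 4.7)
The plan is to instantiate Theorem~\ref{thm:main1} on an initial square that contains all complex roots of $F$ and then simplify the resulting expression using the standard comparisons between $\tau_F$, $\Gamma_F$, $\Mea_F$, $\sigma_F$, $F'$, and $\Disc_F$. Concretely, I would first compute the integer $\Gamma=2^\gamma$ from~\eqref{def:Gamma} in $\tilde O(n^2 \Gamma_F)$ bit operations (which is absorbed into the target bound) and take $\mathcal{B}$ to be the axis-aligned square centered at $0$ of width $w(\mathcal{B})=2^{\Gamma+2}$. Since $\mathcal{B}$ contains every root of $F$, we have $|\MMM(2\mathcal{B})|=n$, $\Mea_F(2\mathcal{B}) = \Mea_F$, and $\LOG(w(\mathcal{B}),w(\mathcal{B})^{-1}) = O(\Gamma_F + \log n) = O(\log n + \LOG \Mea_F)$, where the last equality uses $\Gamma_F \le \LOG \Mea_F + O(1)$ (which follows from $|a_n|>1/4$ and the definition of $\Mea_F$).

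Substituting into Theorem~\ref{thm:main1}, the ``root-independent'' contribution simplifies as follows. The term $n\cdot \tau_F \cdot |\MMM(2\mathcal{B})| = n^2 \tau_F$ is $O(n^3 + n^2 \LOG \Mea_F)$ by the inequality $\tau_F \le n + \LOG \Mea_F$ of Section~\ref{sec:definitions}, and the remaining summands $n \cdot n \cdot (|\MMM(2\mathcal{B})| + \LOG \Mea_F + \LOG w(\mathcal{B}))$ are of the same order. To eliminate the separations $\LOG \sigma_F(z_i)^{-1}$, I would use
\[
|F'(z_i)| \;=\; |a_n|\, \sigma_F(z_i)\!\!\!\! \prod_{j\neq i,\,z_j\neq z_{i^*}}\!\!\!\! |z_i-z_j| \;\le\; 2^{n-2}\,\sigma_F(z_i)\, \MAX(z_i)^{n-2}\, \Mea_F
\]
(using $|z_i-z_j|\le 2\MAX(z_i)\MAX(z_j)$ and $\Mea_F=|a_n|\prod_j \MAX(z_j)$), which yields $\LOG \sigma_F(z_i)^{-1} \le n + \LOG\Mea_F + n\LOG z_i + \LOG F'(z_i)^{-1}$. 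Summing, and using $\sum_i \LOG z_i = O(n+\LOG \Mea_F)$, one obtains $\sum_i \LOG \sigma_F(z_i)^{-1} = O(n^2 + n\LOG\Mea_F + \sum_i \LOG F'(z_i)^{-1})$. Plugging these estimates back into the bound of Theorem~\ref{thm:main1} gives exactly \eqref{bound:main3}, and the same substitutions applied to the precision bound of Theorem~\ref{thm:main1} give \eqref{bound:precision2} in the first form.

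For the discriminant form \eqref{bound:main4}, I would invoke the classical identity
\[
\prod_{i=1}^n F'(z_i) \;=\; (-1)^{n(n-1)/2}\, a_n^{-(n-2)}\cdot \Disc_F,
\]
so $\sum_{i=1}^n \bigl(-\log|F'(z_i)|\bigr) = \LOG \Disc_F^{-1} \pm O(n)$ (using $-2<\log|a_n|\le 0$). To pass from this signed sum to $\sum_i \LOG F'(z_i)^{-1} = \sum_i \max(1,-\log |F'(z_i)|)$, I would split the roots according to the sign of $\log|F'(z_i)|$; the positive part $\sum_i (\log|F'(z_i)|)_+$ is controlled by the trivial upper bound $|F'(z_i)|\le 2^n \Mea_F\, \MAX(z_i)^{n-2}$, which gives $\sum_i (\log|F'(z_i)|)_+ = O(n^2 + n\LOG\Mea_F)$. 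Hence $\sum_i \LOG F'(z_i)^{-1} = \LOG \Disc_F^{-1} + O(n^2 + n\LOG\Mea_F)$, and conversely $\LOG \Disc_F^{-1} \le \sum_i \LOG F'(z_i)^{-1} + O(n)$, so \eqref{bound:main3} and \eqref{bound:main4} (and the corresponding precision bounds) are equivalent up to the absorbed terms.

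The main obstacle is exactly this last conversion: one has to carefully quantify the possibly large positive contributions $\log|F'(z_i)|$ so that the signed sum $-\sum_i \log|F'(z_i)|$, which is essentially $\LOG\Disc_F^{-1}$, can be related to the sum of $\max(1,-\log|F'(z_i)|)$ at a cost that is dominated by $n^2 + n\LOG\Mea_F$. Everything else in the proof is a clean substitution into the master bound of Theorem~\ref{thm:main1}.
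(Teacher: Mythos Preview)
Your proposal is correct and follows essentially the same route as the paper: instantiate Theorem~\ref{thm:main1} on a square of width $2^{\Gamma+2}$, absorb $\tau_F$ and $\LOG w(\mathcal B)$ into $O(n+\LOG\Mea_F)$, replace $\sum_i\LOG\sigma_F(z_i)^{-1}$ by $O(n^2+n\LOG\Mea_F+\sum_i\LOG F'(z_i)^{-1})$, and finally convert $\sum_i\LOG F'(z_i)^{-1}$ to the discriminant via $\prod_i F'(z_i)=\pm a_n^{2-n}\Disc_F$ together with the upper bound $|F'(z_i)|\le 2^{n-1}\MAX(z_i)^{n-2}\Mea_F$ to control the positive contributions. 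The paper's proof simply quotes these two summation estimates from~\cite[Section~2.5 and Theorem~31]{Sagraloff2015}, whereas you spell them out; the ``obstacle'' you flag is exactly the computation the cited reference carries out, and your sketch of it (splitting into $(\log|F'(z_i)|)_+$ and $(\log|F'(z_i)|)_-$) is the standard and correct one.
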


\begin{proof}
	The above bounds follow directly from the bounds in (\ref{bound:main2}) and (\ref{bound:precision}), and the fact that
	$n\LOG(w(\BBB),w(\BBB)^{-1})+n\tau_F=O(n^2+n\LOG\Mea_F)$, $$\sum_{i=1}^n \LOG \sigma_F(z_i)^{-1}=O(n^2+n\LOG(\Mea_F)+\sum_{i=1}^n\LOG F'(z_i)^{-1}),$$ and $$\sum_{i=1}^{n}\LOG F'(z_i)^{-1}=O(n^2+n\LOG \Mea_F+\LOG \Disc_F^{-1});$$ e.g., see~\cite[Section 2.5]{Sagraloff2015} and the proof of~\cite[Theorem~31]{Sagraloff2015} for proofs of the latter bounds.
\end{proof}

Again, we provide simpler bounds for the special case, where the input polynomial $f$ has integer coefficients:

\begin{corollary}\label{thm:main2}
	Let $f\in\mathbb{Z}[x]$ be a square-free integer polynomial of degree $n$ with integer coefficients of bit size less than $\tau$, let $F:=f/\operatorname{lcf}(f)$, and let $\mathcal{B}\subset\mathbb{C}$ be an axis-aligned square with $2^{-O(\tau)}\le w(\BBB)\le 2^{O(\tau)}$. Then, $\mathbb{C}$\textsc{Isolate} with input $\BBB$ needs $\tilde{O}(n^3+n^2\tau)$ bit operations. The same bound also applies when using {\cisolate} to compute isolating disks for all roots of $f$.
\end{corollary}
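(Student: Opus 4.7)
The plan is to apply the adaptive complexity bound from Theorem~\ref{thm:main1} to the input polynomial $F := f/\operatorname{lcf}(f)$ and the given square $\mathcal{B}$, and then specialize each of the quantities appearing in~\eqref{bound:main2} to the integer-coefficient setting. First, I would observe that $\|F\|_\infty \le \|f\|_\infty \le 2^\tau$ (since $|\operatorname{lcf}(f)| \ge 1$), so $\tau_F = O(\tau)$; moreover the leading coefficient of $F$ equals $1$, so the assumption~\eqref{def:polyF} is satisfied. By Landau's inequality, $\Mea_F = \Mea_f/|\operatorname{lcf}(f)| \le \|f\|_2 \le \sqrt{n+1}\cdot 2^\tau$, hence $\LOG \Mea_F(2\mathcal{B}) \le \LOG \Mea_F = O(\tau + \log n)$. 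Finally, from the assumption on $\mathcal{B}$, we have $\LOG(w(\mathcal{B}),w(\mathcal{B})^{-1}) = O(\tau)$, and trivially $|\MMM(2\mathcal{B})| \le n$.

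Next, I would bound the two root-dependent sums by the discriminant. Since $f$ is square-free, $\Disc_f \in \mathbb{Z}\setminus\{0\}$, so $|\Disc_f| \ge 1$. The relation $\Disc_F = \Disc_f/\operatorname{lcf}(f)^{2n-2}$ (since $F$ and $f$ share the same roots and $F$ is monic) yields $\LOG \Disc_F^{-1} \le (2n-2)\log|\operatorname{lcf}(f)| = O(n\tau)$. The identities quoted at the end of the preceding corollary,
\[
\sum_{i=1}^n\LOG F'(z_i)^{-1}=O(n^2+n\LOG \Mea_F+\LOG \Disc_F^{-1})
\]
and
\[
\sum_{i=1}^n \LOG \sigma_F(z_i)^{-1}=O\Bigl(n^2+n\LOG \Mea_F+\sum_{i=1}^n\LOG F'(z_i)^{-1}\Bigr),
\]
then give $\sum_{i=1}^n \LOG F'(z_i)^{-1} = O(n^2 + n\tau)$ and $\sum_{i=1}^n \LOG \sigma_F(z_i)^{-1} = O(n^2 + n\tau)$, which in particular bound the corresponding sums restricted to $\MMM(2\mathcal{B})$.

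Plugging everything into~\eqref{bound:main2} then yields
\[
\tilde{O}\bigl(n\cdot(n\cdot(n + \tau + \tau) + \tau\cdot n + n^2 + n\tau)\bigr) = \tilde{O}(n^3 + n^2\tau),
\]
which establishes the first claim. For the second claim (computing isolating disks for all roots of $f$), I would invoke the construction recalled in Section~\ref{sec:definitions}: using $\tilde{O}(n^2 \Gamma_F) = \tilde{O}(n^2 \tau)$ bit operations one computes an integer $\Gamma$ with $\Gamma_F + 1 \le \Gamma \le 2(\Gamma_F + 8\log n + 1) = O(\tau + \log n)$, and then runs $\mathbb{C}\textsc{Isolate}$ on the axis-aligned square $\mathcal{B}$ of width $2^{\Gamma+2}$ centered at the origin, which contains all complex roots of $F$. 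Since $w(\mathcal{B}) = 2^{O(\tau)}$ satisfies the hypothesis, the first part gives the same $\tilde{O}(n^3 + n^2\tau)$ bound, absorbing the pre-processing cost.

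There is no real obstacle here; the whole argument is a bookkeeping exercise of substituting the integer-polynomial bounds $\tau_F = O(\tau)$, $\LOG \Mea_F = O(\tau + \log n)$, and $\LOG \Disc_F^{-1} = O(n\tau)$ into the adaptive estimate already proved. The only slightly delicate point is verifying that the precision hypothesis is met and that the discriminant bound for $F$ (rather than $f$) comes out correctly after normalising by $\operatorname{lcf}(f)$, which is the calculation exhibited above.
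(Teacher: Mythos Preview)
Your proposal is correct and follows essentially the same route as the paper. The paper's own proof is terser: it cites the simplified bound~\eqref{bound:main4} together with $\Disc_F=\Disc_f/\operatorname{lcf}(f)^{2n-2}\ge 2^{-(2n-2)\tau}$, and for the second claim it skips the $\Gamma$ computation entirely, observing directly via Cauchy's bound that a box of width $2^{\tau+2}$ centered at the origin already contains all roots; your version, which unwinds Theorem~\ref{thm:main1} explicitly and bounds each term, is a bit more detailed but amounts to the same substitution.
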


\begin{proof}
	The claimed bound follows immediately from (\ref{bound:main4}) and the fact that $\Disc_F=\operatorname{lcf}(f)^{2n-2}\cdot \Disc_f\ge 2^{-(2n-2)\tau}$. For the second claim, we may simply run {\cisolate} on a box $\BBB$ of width $2^{\tau+2}$ centered at the origin. According to Cauchy's root bound, $\BBB$ contains all roots of $f$, and thus {\cisolate} yields corresponding isolating disks.
\end{proof}

\section{Conclusion}\label{sec:conclusion}

We proposed a simple and efficient subdivision algorithm to isolate the complex roots of a polynomial with arbitrary complex coefficients. Our algorithm achieves complexity bounds that are comparable to the best known bounds
for this problem, which
are achieved by methods based on fast polynomial factorization~\cite{Pan:survey,MSW-rootfinding2013,Pan:alg}. Compared to these methods,
our algorithm is quite simple and uses only fast algorithms for polynomial multiplication and Taylor shift computation but no other, more involved, asymptotically fast subroutines. Hence, also by providing a self-contained presentation and
pseudo-code for all subroutines of the algorithm, we hope that there will soon be implementations of our method. So far, we have not discussed a series of questions concerning an efficient implementation, including heuristics and filtering techniques to speed up the computations in practice. In particular, we remark that Graeffe iteration is not well suited for implementations that are restricted to single- or double precision arithmetic. This is due to the fact that, after only a few iterations, it produces very large intermediate values with exponents in the corresponding floating point representations that are outside of the allowed range of the IEEE 754 specifications. However, when using multi-precision arithmetic, which would be a natural choice when implementing our solver, this issue is no longer relevant. In fact, preliminary tests on a few examples show that, when using multi-precision arithmetic, the relative error in the $T^G_k$-test stays quite stable even after a number of Graeffe iterations.\footnote{Personal communication with Alexander Kobel from Max-Planck-Institute for Informatics in Saarbr\"ucken.} Hence, we are confident that a careful implementation of the $\mathbf{T_*}$ will turn out to be efficient.

Another possible direction of future research is to
extend our current Newton-bisection technique and
complexity analysis to the analytic roots algorithm
in~\cite{DBLP:conf/cie/YapS013}.
See~\cite{Strzebonski2012282} for an alternative approach for the
computation of the real roots of analytic functions obtained by composing polynomials and the functions $\log$, $\operatorname{exp}$, and $\operatorname{arctan}$.

At the end of Section~\ref{subsec:algorithm}, we sketched how to use our algorithm to isolate the roots of a not necessarily square-free polynomial for which the number of distinct complex roots is given as additional input. Also, we may use our algorithm to further refine the isolating disks for the roots of a polynomial in order to compute $L$-bit approximations of all roots. There exist dedicated methods~\cite{DBLP:journals/corr/abs-1104-1362,MSW-rootfinding2013,Pan:alg,pantsi:ISSAC13,DBLP:conf/snc/PanT14a} for refining intervals or disks, that are already known to be isolating for the roots of a polynomial. For large $L$, that is if $L$ dominates other parameters, their bit complexity is $\tilde{O}(nL)$. In comparison, using $\mathbb{C}\textsc{Isolate}$ for the refinement directly, its bit complexity would be of size $\tilde{O}(n^2L)$. We suspect that this bound can be further improved by using a proper modification of the $T_*$-test, which only needs to evaluate $F$ and its first derivative, and approximate multipoint evaluation.
We have not analyzed these extensions, however, we are confident that our approach yields similar bit complexity bounds as provided in~\cite{MSW-rootfinding2013} for the modified variant of Pan's method. This will be subject of future work.

\bibliography{bib}
\bibliographystyle{abbrv}


\section{Appendix}
\subsection{Missing proofs in Section~\ref{sec:pellet}}\label{appendix:Graeffe}
We split the proof of Theorem~\ref{tktest thm} into two technical lemmas:
\begin{lemma}
	Let $\Delta:=\Delta(m,r)$ be a disk that is $(1,4c_2\cdot\MAX(k)\cdot n^3)$-isolating for the roots $z_{1},\ldots,z_k$, then,
	for all $z\in c_2 n^2\cdot\Delta$, it holds that $F^{(k)}(z)\neq 0$. Furthermore,
	\[
		\sum_{i=k+1}^{n} \left| \frac{ F^{(i)}(m) (c_1n\cdot r)^{i-k} k! } { F^{(k)}(m) i!} \right|
		< \frac{1}{2K}.
	\]
\end{lemma}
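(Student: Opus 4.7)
The plan is to factor $F = a_n H G$, where $H(x):=\prod_{j=1}^k(x-z_j)$ collects the ``close'' roots (all in $\Delta$) and $G(x):=\prod_{j=k+1}^n(x-z_j)$ collects the ``far'' roots (all outside $R\cdot\Delta$, with $R:=4c_2\MAX(k)n^3$). Leibniz' rule gives
\[
\frac{F^{(i)}(z)}{i!} \;=\; a_n\sum_{l=0}^{\min(i,k)}\frac{H^{(l)}(z)}{l!}\cdot\frac{G^{(i-l)}(z)}{(i-l)!},
\]
and since $H$ is monic of degree $k$ we have $H^{(k)}/k!\equiv 1$, so the term $l=k$ is ``dominant'': it equals $a_n G^{(i-k)}(z)/(i-k)!$, and in particular the $l=k$ term of $F^{(k)}(z)/k!$ is simply $a_nG(z)$. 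I will rely on two factor bounds: from $|z-z_j|\le|z-m|+r$ for $j\le k$,
\[
\left|\frac{H^{(l)}(z)}{l!}\right|\;\le\;\binom{k}{l}(|z-m|+r)^{k-l},
\]
and, writing $G^{(s)}(z)/(s!\,G(z))=e_s(1/(z-z_j):j>k)$ as an elementary symmetric polynomial,
\[
\left|\frac{G^{(s)}(z)}{s!\,G(z)}\right|\;\le\;\binom{n-k}{s}(Rr-|z-m|)^{-s}.
\]

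For the first claim I fix $z\in c_2n^2\cdot\Delta$ (so $|z-m|\le c_2n^2r$) and bound the ``perturbation'' part ($l<k$) of $F^{(k)}(z)/k!$. The per-factor ratio is at most $(c_2n^2+1)/\bigl(c_2n^2(4\MAX(k)n-1)\bigr)\le 2/(3\MAX(k)n)$, using that the constraints in (\ref{def:constants}) force $c_2n^2\ge 1$. Crudely bounding $\binom{k}{l}\binom{n-k}{k-l}\le(kn)^{k-l}/((k-l)!)^2$ and using $k\le\MAX(k)$, I obtain
\[
\frac{|\text{perturbation}|}{|a_nG(z)|}\;\le\;\sum_{j\ge1}\frac{(2/3)^j}{(j!)^2}\;<\;1,
\]
so $F^{(k)}(z)\ne0$. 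The case $k=0$ is immediate because then $F=a_nG$ has no root in $R\cdot\Delta\supset c_2n^2\cdot\Delta$.

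For the second claim I evaluate at $z=m$, where the factor bounds sharpen to $|H^{(l)}(m)/l!|\le\binom{k}{l}r^{k-l}$ and $|G^{(s)}(m)/(s!\,G(m))|\le\binom{n-k}{s}/(Rr)^s$. The same perturbation estimate at $z=m$, now with the much smaller per-factor ratio $1/(4c_2\MAX(k)n^2)$, gives $|F^{(k)}(m)/k!|\ge|a_nG(m)|(1-\eta)$ with $\eta<1$. Substituting these into the target quotient and telescoping
\[
\frac{r^{k-l}(c_1nr)^{i-k}}{(Rr)^{i-l}}\;=\;(c_1n)^{l-k}\beta^{i-l},\qquad \beta:=\frac{c_1}{4c_2\MAX(k)n^2},
\]
then interchanging the double sum via $s=i-l$ bounds the whole sum by
\[
\sum_{i=k+1}^{n}\left|\frac{F^{(i)}(m)(c_1nr)^{i-k}k!}{F^{(k)}(m)\,i!}\right|\;\le\;\frac{1}{1-\eta}\left(1+\tfrac{1}{c_1n}\right)^{k}\bigl[(1+\beta)^{n-k}-1\bigr].
\]
The two hypotheses of (\ref{def:constants}) now apply precisely: $c_1n\ge\MAX(k)/\ln(1+1/(8K))\ge k/\ln(1+1/(8K))$ yields $(1+1/(c_1n))^k\le e^{k/(c_1n)}\le 1+1/(8K)$, and $c_1\le c_2\ln(1+1/(2K))$ yields $n\beta\le\ln(1+1/(2K))/(4\MAX(k)n)$, so $(1+\beta)^{n-k}-1\le e^{n\beta}-1$ is small enough that the product is strictly less than $1/(2K)$.

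The main obstacle will be the book-keeping in the final arithmetic to confirm the strict inequality $<1/(2K)$; however the two constraints of (\ref{def:constants}) are designed precisely for this purpose, with ``$8K$'' controlling the $H$-side correction $(1+1/(c_1n))^k$ and ``$2K$'' controlling the $G$-side factor $(1+\beta)^{n-k}-1$, leaving ample room.
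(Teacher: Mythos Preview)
Your proof is correct and follows a genuinely different route from the paper's.

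The paper does \emph{not} factor $F=a_nHG$. For the first claim it works with the identity
\[
\frac{F^{(k)}(z)}{F(z)}=\sum_{J\in\binom{[n]}{k}}\prod_{j\in J}\frac{1}{z-z_j},
\]
isolates the single dominant summand $J=[k]$, and shows (by a case split $k\le n/2$ versus $k>n/2$) that the remaining summands contribute strictly less, yielding a contradiction if $F^{(k)}(z)=0$. For the second claim the paper then \emph{reuses} the first: since every root of $F^{(k)}$ lies outside $c_2n^2\cdot\Delta$, the same symmetric-function identity applied to $F^{(k)}$ gives $\bigl|F^{(k+i)}(m)/F^{(k)}(m)\bigr|\le i!\binom{n-k}{i}(c_2n^2r)^{-i}$, and the target sum collapses to $\sum_{i\ge1}(c_1/c_2)^i/i!\le e^{c_1/c_2}-1\le 1/(2K)$, using only the left constraint in (\ref{def:constants}).

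Your $HG$-Leibniz decomposition trades that two-stage argument for a single, uniform scheme: the dominant Leibniz term $l=k$ plays the role of the paper's dominant subset $J=[k]$, and your second claim does not rely on the first at all. The price is slightly heavier bookkeeping---two correction factors $(1+1/(c_1n))^{k}$ and $1/(1-\eta)$ appear alongside the main factor $(1+\beta)^{n-k}-1$, and you must invoke \emph{both} inequalities in (\ref{def:constants}) rather than just $c_1/c_2\le\ln(1+1/(2K))$. The gain is that you avoid the detour through the (unknown) roots of $F^{(k)}$ and the case split on $k$ versus $n/2$. One minor slip: the per-factor ratio at $z=m$ is $1/R=1/(4c_2\MAX(k)n^{3})$, not $n^{2}$; this only strengthens your $\eta$-bound and is immaterial.
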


\begin{proof}
	\begin{enumerate}
		\item For the first part, we may assume that $k\ge 1$. Then, for the $k$'th derivative of $F$ and any complex $z$ that is not a root of $F$, it holds that
		      \begin{align*}
		      	\frac{F^{(k)}(z)}{F(z)}
		      	=
		      	\sum_{J \in \binom{[n]}{k}}
		      	\prod_{j\in J} \frac{1}{z-z_j}
		      	=
		      	\prod_{j=1}^{k}\frac{1}{z-z_j}
		      	+
		      	\sum_{J \in \binom{[n]}{k}, J\neq [k]}
		      	\prod_{j\in J} \frac{1}{z-z_j}.
		      \end{align*}
		      By way of contradiction, assume
		      $F^{(k)}(z)=0$ for some $z\in c_2n^2\cdot\Delta$.  Then,
		      \[
		      	\prod_{j=1}^{k}\frac{1}{|z-z_j|}
		      	\le
		      	\sum_{J \in \binom{[n]}{k}, J\neq [k]}
		      	\prod_{j\in J} \frac{1}{|z-z_j|}.
		      \]
		      Assuming $k\le n/2$, we proceed to show
		      \begin{align*}
		      	1\le\sum_{J \in \binom{[n]}{k}, J\neq [k]}
		      	  & \frac{|z-z_1|\cdots |z-z_k|}{\prod_{j\in J}|z-z_j|} \\
		      	  & =
		      	\sum_{k'=0}^{k-1}
		      	\sum_{J\in\binom{[k]}{k'}}\sum_{J'\in\binom{[n]\setminus [k]}{k-k'}}
		      	\frac{\prod_{i=1}^k|z-z_i|}
		      	{\prod_{i\in J} |z-z_{i}| \cdot \prod_{j\in J'} |z-z_{j}|}\\
		      	  & \le
		      	\sum_{k'=0}^{k-1} \binom{k}{k'}\binom{n-k}{k-k'}
		      	\left(\frac{2c_2n^2r}{4c_2kn^3r - c_2n^2r }\right)^{k-k'}\\
		      	  & \le
		      	\sum_{k'=0}^{k-1} \binom{k}{k'}\binom{n-k}{k-k'}
		      	\left(\frac{1}{2kn}\right)^{k-k'}\\
		      	  & \le
		      	\sum_{k'=0}^{k-1} \frac{k^{k-k'}}{(k-k')!}(n-k)^{k-k'}
		      	\left(\frac{1}{2kn}\right)^{k-k'}\\
		      	  & \le
		      	\sum_{k'=0}^{k-1} (1/2)^{k-k'}/{(k-k')!}\\
		      	  & < e^{1/2}-1 <1,
		      	\text{ a contradiction.}
		      \end{align*}

		      The preceding argument assumes $k\le n/2$ because
		      this allows us to freely choose $J$ and $J'$ as indicated.
		      But suppose $k>n/2$.   We then have
		      \begin{align*}
		      	1\le\sum_{I \in \binom{[n]}{k}\setminus [k]}
		      	  & \frac{|z-z_1|\cdots |z-z_k|}{|z-z_{i_1}|\cdots |z-z_{i_k}|} \\
		      	  & =
		      	\sum_{k'=1}^{n-k}
		      	\sum_{J\in\binom{[k]}{k-k'}}\sum_{J'\in\binom{[n]\setminus [k]}{k'}}
		      	\frac{\prod_{i=1}^k|z-z_i|}
		      	{\prod_{i\in J} |z-z_{i}| \cdot \prod_{j\in J'} |z-z_{j}|}\\
		      	  & \le
		      	\sum_{k'=1}^{n-k} \binom{k}{k-k'}\binom{n-k}{k'}
		      	\left(\frac{2c_2n^2r}{4c_2kn^3r - c_2n^2r }\right)^{k'}\\
		      	  & <
		      	\sum_{k'=1}^{n-k} \binom{k}{k-k'}\binom{n-k}{k'}
		      	\left(\frac{2}{3kn}\right)^{k'}\\
		      	  & \le
		      	\sum_{k'=1}^{n-k} \frac{k^{k'}}{k'!}(n-k)^{k'}
		      	\left(\frac{2}{3kn}\right)^{k'}\\
		      	  & \le
		      	\sum_{k'=1}^{n-k} \frac{1}{k'!}\left(\frac{2}{3}\right)^{k'}\le e^{2/3}-1
		      	< 1,
		      	\text{ again a contradiction.}
		      \end{align*}

		\item Similar as above, with $z^{(k)}_1,\ldots , z^{(k)}_{n-k}$ denoting the roots of $F^{(k)}$, it holds that
		      \begin{align*}
		      	\left| \frac{F^{(k+i)}(m)}{F^{(k)}(m)} \right|
		      	\le
		      	\sum_{J\in\binom{[n-k]}{i}}\prod_{j\in J}
		      	\frac{1}{ |m-z^{(k)}_{j}|}
		      	\le
		      	\frac{\binom{n-k}{i}}{(c_2n^2r)^i},
		      \end{align*}
		      and thus
		      \begin{align*}
		      	\sum_{i=k+1}^{n} \left| \frac{ F^{(i)}(m) (c_1nr)^{i-k} k! } { F^{(k)}(m) i!} \right|
		      	  & \le
		      	\sum_{i=1}^{n-k} \left| \frac{F^{(k+i)}(m)}{F^{(k)}(m)} \right| \frac{(c_1nr)^i}{i!}
		      	\qquad |\text{ since } \frac{k!}{(k+i)!}\le \frac{1}{i!}\\
		      	  & \le
		      	\sum_{i=1}^{n-k} \frac{\binom{n-k}{i}}{c_2^in^{2i}r^i} \frac{(c_1nr)^i}{i!}\\
		      	  & <
		      	\sum_{i=1}^{n-k} \Big( \frac{c_1}{c_2} \Big)^i \frac{1}{i!}
		      	\le
		      	e^{c_1/c_2} - 1
		      	\le \frac{1}{2K}
		      	,
		      \end{align*}
		      where we used (\ref{def:constants}) for the last inequality.\qedhere
	\end{enumerate}
\end{proof}

\begin{lemma}
	Le $\lambda$ be a real value with $\lambda\ge 16K\cdot\MAX(k)^2\cdot n$ and suppose that $\Delta:=\Delta(m,r)$ is a disk that is $(1,\lambda)$-isolating for the roots $z_1,\ldots, z_k$ of $F$, then
	\[
		\sum_{i<k} \frac{| F^{(i)}(m)|}{| F^{(k)}(m) |} \frac{ (c_1n\cdot r)^{i-k}k!}{i!}
		<
		\frac{1}{2K}.
	\]
\end{lemma}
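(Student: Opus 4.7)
My approach mirrors that of the companion lemma on the ``high-order'' sum, but is now driven by the distinguished quantity $Q(m) := \prod_{j > k}(m - z_j)$ that appears when we peel off the $k$ roots inside $\Delta$. The starting point is the factored identity
$$\frac{F^{(i)}(m)}{i!} = a_n \sum_{I \in \binom{[n]}{i}} \prod_{j \in [n]\setminus I}(m - z_j) =: a_n T_i,$$
so the target inequality becomes $\sum_{i < k} (|T_i|/|T_k|)(c_1 n r)^{i-k} < 1/(2K)$. I split each $T_i$ according to $a := |I \cap [k]|$: in the $T_k$-sum the single choice $I = [k]$ isolates exactly $Q(m)$, and every other choice must drop at least one ``far'' root whose modulus is $\ge \lambda r$.

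Using $|m - z_j| \le r$ for $j \le k$ and $|m - z_j| \ge \lambda r$ for $j > k$, the partition by $a$ (with the substitution $d := i - a$) yields
$$|T_k| \ge |Q(m)|\Bigl(1 - \sum_{d = 1}^{k}\binom{k}{d}\binom{n - k}{d}\lambda^{-d}\Bigr), \quad |T_i| \le |Q(m)|\,r^{k-i} \sum_{d = 0}^{i}\binom{k}{i-d}\binom{n - k}{d}\lambda^{-d},$$
the $d = 0$ term in the latter contributing the main factor $\binom{k}{i}$. The hypothesis $\lambda \ge 16K\MAX(k)^2 n$ is chosen so that each $d$-th summand is bounded by $(kn/\lambda)^d/d! \le (1/(16Kk))^d/d! \le (1/(16K))^d/d!$; both tail sums (for $d \ge 1$) are therefore at most $e^{1/(16K)} - 1 \le 1/(8K)$. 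This gives $|T_k| \ge \tfrac{7}{8}|Q(m)|$ and $|T_i| \le \tfrac{9}{8}\binom{k}{i}\,r^{k-i}|Q(m)|$, hence $|T_i|/|T_k| \le \tfrac{9}{7}\binom{k}{i}\,r^{k-i}$.

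Inserting this ratio and reindexing by $d = k - i$ produces
$$\sum_{i < k} \frac{|T_i|}{|T_k|}(c_1 n r)^{i - k} \le \frac{9}{7}\sum_{d = 1}^{k}\binom{k}{d}(c_1 n)^{-d} \le \frac{9}{7}\bigl(e^{\MAX(k)/(c_1 n)} - 1\bigr),$$
and the second hypothesis $c_1 n \ge \MAX(k)/\ln(1 + 1/(8K))$ makes this at most $9/(56K) < 1/(2K)$. The main obstacle is the bookkeeping: one has to verify that the factor $|Q(m)|$ cancels cleanly between numerator and denominator, that the exponents of $r$ work out so the resulting inequality is dimension-free in $r$, and that the two constants $7/8$ and $9/8$ combine with the $1/(8K)$ contribution from the $c_1$-hypothesis to leave strict slack below $1/(2K)$. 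The two hypotheses on $\lambda$ and on $c_1$ are tuned in tandem for exactly this final slack, so the delicate aspect is not any single inequality but the coordinated calibration of all constants.
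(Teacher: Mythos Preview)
Your proof is correct and follows essentially the same approach as the paper: both extract the dominant factor $Q(m)=H(m)=\prod_{j>k}(m-z_j)$, lower-bound $|F^{(k)}(m)|$ by showing the remaining terms are a small perturbation of size $O(kn/\lambda)$, upper-bound each $|F^{(i)}(m)|$ by the analogous partition according to how many ``far'' roots are selected, and finish with $e^{k/(c_1 n)}-1\le 1/(8K)$. The only organizational difference is that the paper phrases the decomposition via the Leibniz rule on the factorization $F=GH$ while you work directly with the index-set expansion of $T_i$; the resulting estimates (your $7/8$ and $9/8$ versus the paper's $1/2$) and the final inequality are the same computation in slightly different bookkeeping.
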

\begin{proof}
	We may assume that $k\ge 1$. Write $F(x)=G(x) H(x)$ with $G(x)=\prod_{i=1}^{k} (x-z_i)$ and
	$H(x)=\prod_{j=k+1}^n (x-z_j)$. By induction, one shows that
	$F^{(i)}(x) = \sum_{j=0}^i \binom{i}{j} G^{(i-j)}(x)H^{(j)}(x)$ and
	$
	F^{(k)}(x) = k! \sum_{I\in\binom{[n]}{k}} \prod_{i\notin I}(x-z_i)=k!\cdot\sum_{J\in\binom{[n]}{n-k}} \prod_{j\in J} (x-z_j)$. It follows that

	\begin{align*}
		|F^{(k)}(m)|
		  & =
		k!\cdot \Big|\sum_{J\in\binom{[n]}{n-k}} \prod_{j\in J} (m-z_j) \Big|
		=\\
		  & k! \cdot|H(m)| \cdot\Big|\sum_{J\in\binom{[n]}{n-k}} \frac{\prod_{j\in J} (m-z_j)}{\prod_{i=k+1}^n |m-z_i|}\Big| \\
		  & \ge
		k!\cdot |H(m)| \cdot \Big(1 - \sum_{J\in\binom{[n]}{n-k}:J\neq \{k+1,\ldots,n\} } \frac{\prod_{j\in J} |m-z_j|}{\prod_{i=k+1}^n |m-z_i|}\Big)\\
		  & \ge
		k! \cdot|H(m)| \cdot \Big(1 - \sum_{j=1}^{\min(k,n-k)} \sum_{J_1,J_2:J_1\subset[k]:|J_1|=j\text{ and } J_2\subset [n]\setminus [k]:|J_2|=n-k-j}\frac{\prod_{j\in J_1} |m-z_j|}{(\lambda r)^j}\Big)\\
		  & \ge
		k! \cdot|H(m)| \cdot \Big(1 - \sum_{j=1}^{\min(k,n-k)} \binom{k}{j}\binom{n-k}{n-k-j} \frac{r^j}{(\lambda r)^j}\Big)\\
		  & \ge
		k! \cdot|H(m)| \cdot \Big(2 - \sum_{j=0}^{n} k^j\binom{n}{j} \frac{1}{\lambda^j}\Big)\\
		  & \ge
		k!\cdot|H(m)| \cdot \Big(2 - \left(1 + \frac{k}{\lambda}\right)^n\Big)\\
		  & \ge
		k! \cdot|H(m)| \cdot \Big(2 - e^{\frac{1}{4}}\Big)\\
		  & \ge
		\frac{k!\cdot |H(m)|}{2}.
	\end{align*}
	For $G$, we have
	$G^{(i)}(x) = i!\sum_{J\in\binom{[k]}{i}} \prod_{j\notin J}(x-z_j)$, and thus $|G^{(i)}(m)|\le i!\binom{k}{i} r^{k-i}$. In addition,
	\[
		\left| \frac{H^{(i)}(m)}{H(m)} \right|
		\le
		\sum_{J\in \binom{[n]}{i}} \prod_{j\in J}\frac{1}{|m-z_{j}|}
		\le
		i!\cdot\binom{n-k}{i}\frac{1}{(\lambda r)^i}
	\]
	and thus
	\begin{align*}
		|G^{(i-j)}(m)H^{(j)}(m)|
		  & \le
		|H(m)|\cdot(i-j)!\binom{k}{i-j}r^{k-(i-j)}
		\cdot j!\binom{n-k}{j} \frac{1}{(\lambda r)^j}\\
		  & =
		|H(m)|\cdot(i-j)!j!\binom{k}{i-j}\binom{n-k}{j}
		\cdot  \frac{1}{\lambda ^j}r^{k-i}.
	\end{align*}
	Hence, using that $\lambda\ge 16Kk^2\cdot n$ and $c_1n\ge \frac{k}{\ln\left(1+\frac{1}{8K}\right)}$, yields
	\begin{align*}
		  & \sum_{i=0}^{k-1} \frac{| F^{(i)}(m)|}{| F^{(k)}(m) |} \frac{ (c_1nr)^{i-k}k!}{i!}
		\le
		\sum_{i=0}^{k-1} \sum_{j=0}^i \frac{| G^{(i-j)}(m)H^{(j)}(m)|}{| F^{(k)}(m) |} \frac{\binom{i}{j} (c_1nr)^{i-k}k!}{i!}\\
		  & \le
		\sum_{i=0}^{k-1} \sum_{j=0}^i \frac{|H(m)|}{| F^{(k)}(m) |}(i-j)!j!\binom{k}{i-j} \binom{n-k}{j}\binom{i}{j}\frac{(c_1n)^{i-k}}{\lambda ^j} \frac{ k!}{i!}\\
		  & \le
		2(c_1n)^{-k} \sum_{i=0}^{k-1} \binom{k}{i}(c_1n)^i
		+ 2\sum_{i=1}^{k-1}\sum_{j=1}^i \binom{k}{i-j} \binom{n-k}{j}\frac{(c_1n)^{i-k}}{\lambda^j}\\
		  & \le
		2(c_1n)^{-k} \sum_{i=0}^{k-1} \binom{k}{i}(c_1n)^i
		+ 2\sum_{i=1}^{k-1}\sum_{j=1}^i  \binom{n-k}{j}\frac{k^j}{\lambda^j} \ln^{k-i}\left(1+\frac{1}{8K}\right)\\
		  & =
		2(c_1n)^{-k} \sum_{i=0}^{k-1} \binom{k}{i}(c_1n)^i
		+ \frac{k-1}{8Kk} + \sum_{i=2}^{k-1}\sum_{j=2}^i \frac{2}{(16Kk)^j} \\
		  & <
		2\left(\sum_{j=0}^{k} \binom{k}{j}(c_1n)^{-j} - 1\right) + \frac{1}{4K}
		\le
		2\left(e^{k/(c_1n)} - 1\right) + \frac{1}{4K}
		\le
		\frac{1}{2K}
		.\qedhere
	\end{align*}
\end{proof}

\subsection{Missing Proofs in Section~\ref{sec:Graeffe}}\label{appendix:graeffe}

\begin{theorem*}[Restatement of Theorem~\ref{graeffe thm}]
	Denote the roots of $F$ by $z_1,\ldots,z_n$, then it holds that $F^{[1]}(x)=\sum_{i=0}^n a_i^{[1]}x^i = a_n^2\cdot \prod_{i=1}^n(x-z_i^2)$. In particular, the roots of the first Graeffe iterate $F^{[1]}$ are the squares of the roots of $F$. In addition, we have
	\[
		n^2\cdot \MAX(\|F\|_\infty)^2\ge\|F^{[1]}\|_\infty\ge \|F\|_\infty^2 \cdot 2^{-4n}.
	\]
\end{theorem*}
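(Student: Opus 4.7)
The plan is to split the statement into two independent parts: the algebraic identity describing the roots, and the two norm estimates.

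For the root identity, I would start from the defining splitting $F(x) = F_e(x^2) + x\, F_o(x^2)$ to get $F(x)F(-x) = F_e(x^2)^2 - x^2 F_o(x^2)^2$. Substituting $y = x^2$, the right-hand side is the polynomial $F_e(y)^2 - y\, F_o(y)^2$ in $y$. On the other hand, the factored form of $F$ yields
\[
F(x)F(-x) = a_n^2\prod_{i=1}^n (x-z_i)(-x-z_i) = a_n^2(-1)^n\prod_{i=1}^n(x^2-z_i^2),
\]
which in the variable $y = x^2$ becomes $a_n^2(-1)^n \prod_{i=1}^n (y - z_i^2)$. Multiplying by $(-1)^n$ gives the claimed identity $F^{[1]}(y) = a_n^2 \prod_{i=1}^n (y - z_i^2)$, from which the statement about the leading coefficient and the roots of $F^{[1]}$ follows immediately.

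For the upper bound, I would just expand: the coefficient of $x^k$ in $F_e^2$ is a sum of the form $\sum_{i+j=k} a_{2i}a_{2j}$ with at most $k+1 \le n+1$ terms, each of absolute value at most $\|F\|_\infty^2$, and similarly for the coefficient of $x^k$ in $x\, F_o^2$. Hence every coefficient of $F^{[1]}$ has absolute value at most $2(n+1)\|F\|_\infty^2$, which is bounded by $n^2\cdot\MAX(\|F\|_\infty)^2$ since $n\ge 2$ and $\MAX(\|F\|_\infty)\ge\max(1,\|F\|_\infty)$.

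The core of the argument is the lower bound, which I would obtain through the Mahler measure. Since $F^{[1]}$ has leading coefficient $a_n^2$ and roots $z_1^2,\dots,z_n^2$, we get
\[
\Mea(F^{[1]}) = |a_n|^2 \prod_{i=1}^n \max(1,|z_i|^2) = \left(|a_n|\prod_{i=1}^n\max(1,|z_i|)\right)^2 = \Mea(F)^2.
\]
Now I invoke two classical inequalities for a degree-$n$ polynomial $p$: Mignotte's bound $|p_k| \le \binom{n}{k}\Mea(p)$, and hence $\|p\|_\infty \le 2^n\Mea(p)$; and Landau's inequality $\Mea(p) \le \|p\|_2 \le \sqrt{n+1}\,\|p\|_\infty$. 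Applying the first to $F$ yields $\Mea(F) \ge 2^{-n}\|F\|_\infty$, so $\Mea(F^{[1]}) \ge 2^{-2n}\|F\|_\infty^2$. Applying the second to $F^{[1]}$ (still of degree $n$) gives
\[
\|F^{[1]}\|_\infty \ge \frac{\Mea(F^{[1]})}{\sqrt{n+1}} \ge \frac{\|F\|_\infty^2}{2^{2n}\sqrt{n+1}} \ge \|F\|_\infty^2\cdot 2^{-4n},
\]
where the last step uses $\sqrt{n+1}\le 2^{2n}$ for $n\ge 1$. The main obstacle is just being careful to apply Landau and Mignotte in the correct direction on the right polynomial ($F$ for the first, $F^{[1]}$ for the second); once that is set up, the bound $2^{-4n}$ comes out with a factor of $2^n$ to spare, and the remaining computations are routine.
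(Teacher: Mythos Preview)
Your proposal is correct and in several places cleaner than the paper's own proof.

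For the root identity, you use the factorization $(-1)^n F(x)F(-x)=F^{[1]}(x^2)$, which immediately gives both the roots and their multiplicities. The paper instead verifies $F^{[1]}(z_i^2)=0$ by direct substitution and then invokes a perturbation argument to handle multiplicities; your route is more direct.

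For the upper bound there is a small arithmetic slip: your count of $2(n+1)$ terms per coefficient exceeds $n^2$ when $n=2$ and $\|F\|_\infty\ge 1$. A slightly sharper count---noting that $F_e$ and $F_o$ have degrees $\lfloor n/2\rfloor$ and $\lfloor (n-1)/2\rfloor$, so the two convolutions together contribute at most $(\lfloor n/2\rfloor+1)+(\lfloor (n-1)/2\rfloor+1)=n+1$ terms---fixes this, since $n+1\le n^2$ for $n\ge 2$. The paper simply uses the crude bound $n^2$ on the number of products $a_ia_j$.

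The interesting divergence is in the lower bound. The paper argues geometrically: it splits the roots into those with $|z_i|<2$ and $|z_i|\ge 2$, uses the maximum modulus principle to find a point $z_{\max}$ on the unit circle with $|F(z_{\max})|\ge \|F\|_\infty$, locates another unit-circle point $z'$ where $\prod_{|z_i|<2}|z'-z_i^2|\ge 1$, and then compares $|F^{[1]}(z')|$ to $|F(z_{\max})|^2$ factor by factor. Your Mahler-measure argument---observing $\Mea(F^{[1]})=\Mea(F)^2$ and sandwiching with Mignotte's and Landau's inequalities---is shorter, uses only standard tools (Landau is in fact cited elsewhere in the paper), and even yields a slightly better constant. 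The paper's approach is more self-contained in that it avoids quoting Mignotte's coefficient bound, but yours is the more natural one.
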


\begin{proof}
	Notice that $a_n^{[1]}=a_n^2$ follows
	directly from the definition of $F^{[1]}$. Furthermore, we have
	\begin{align*}
		F^{[1]}(z_i^2)
		  & =(-1)^n \cdot[F_e(z_i^2)^2 - z_i^2\cdot F_o(z_i^2)^2]                                   \\
		  & =(-1)^n \cdot[F_e(z_i^2) - z_i\cdot F_o(z_i^2)]\cdot [F_e(z_i^2) + z_i\cdot F_o(z_i^2)] \\
		  & =(-1)^n \cdot[F_e(z_i^2) - z_i\cdot F_o(z_i^2)]\cdot F(z_i)=0.
	\end{align*}
	Going from $F$ to an arbitrary small perturbation $\tilde{F}$ (which has only simple roots and for which $\tilde{z}_i^2\neq \tilde{z}_j^2$ for all pairs of distinct roots $\tilde{z}_i$ and $\tilde{z}_j$ of $\tilde{F}$), we conclude that each root $z_i^2$ of $F^{[1]}$ has multiplicity $\operatorname{mult}(z_i^2,F^{[1]})=\sum_{j:z_j^2=z_i^2}\operatorname{mult}(z_j,F)$. Hence, the first claim follows. For the second claim, notice that the left inequality follows immediately from the fact that each coefficient of $F^{[1]}$ is the sum of at most $n^2$ many products of the form $\pm a_i\cdot a_j$, and each of these products has absolute value smaller than or equal to $\MAX(\|F\|_\infty)^2$. For the right inequality we have to work harder: W.l.o.g., we may assume that $|z_i|<2$ for $i=1,\ldots,k$, and that $|z_i|\ge 2$ for $i=k+1,\ldots,n$. Let $z_{\max}$ be a point in the closure of the unit disk $\Delta(0,1)$ such that $|F(z_{\max})|=\max_{z:|z|\le 1}|F(z)|$. Since $F$ takes its maximum on the boundary of $\Delta(0,1)$, we must have $|z_{\max}|=1$, and using Cauchy's Integral Theorem to write the coefficients of $F$ in terms of an integral, we conclude that $|F(z_{\max)}|\ge \|F\|_\infty$. In addition, it holds that $|F(z_{\max})|\le \sum_{i=0}^n |a_i|\cdot |z_{\max}|^n=\sum_{i=0}^n |a_i|\le (n+1)\cdot \|F\|_\infty$, and thus
	\begin{align}\label{formula:FinftyDelta}
		\|F\|_\infty\le |F(z_{\max})|=\max_{z:|z|\le 1}|F(z)|\le (n+1)\cdot \|F\|_\infty.
	\end{align}
	Applying the latter result to the polynomial $g(x):=\prod_{i=1}^k (z-z_i^2)$ yields the existence of a point $z'$ with $|z'|=1$ and $|g(z')|\ge 1$. Hence, it follows that
	\begin{align*}
		|F^{[1]}(z')| & =|a_n|^2\prod_{i=1}^k|z'-z_i^2| \prod_{i=k+1}^n|z'-z_i^2|
		\ge |a_n|^2\prod_{i=k+1}^n |(\sqrt{z'}-z_i)\cdot(\sqrt{z'}+z_i)|\\
		              & \ge |a_n|^2\cdot \prod_{i=1}^k \frac{|z_{\max}-z_i|^2}{9} \cdot \prod_{i=k+1}^n \frac{|z_{\max}-z_i|^2}{9}
		\ge \frac{|F(z_{\max})|^2}{9^n},
	\end{align*}
	where we used that $|x-y|<3$ for arbitrary complex points $x,y$ with $|x|=1$ and $|y|<2$, and that $|x-z|\ge \frac{|y-z|}{3}$ for arbitrary complex points $x,y,z$ with $|x|=|y|=1$ and $|z|\ge 2$. We conclude that
	\[
		\|F^{[1]}\|_\infty\ge \frac{|F^{[1]} (z')|}{n+1}\ge \frac{|F(z_{\max})|^2}{(n+1)\cdot 9^n}\ge\|F\|_\infty^2\cdot 2^{-4n}. \qedhere
	\]
\end{proof}

\end{document}